\newtheorem{theorem}{Theorem}[section]
\newtheorem{corollary}{Corollary}[section]
\newtheorem{lemma}{Lemma}[section]
\newtheorem{proposition}{Proposition}[section]
\numberwithin{equation}{section}
\newcommand*{\pmax}{p_\mathrm{max}}
\newcommand*{\betap}{\beta_{\bp}}
\newcommand*{\edense}{E_\mathrm{dense}}
\newcommand*{\efast}{E^n_\mathrm{fast}}
\newcommand{\ubx}{\mathbf{X}}
\newcommand{\bp}{\boldsymbol{p}}
\newcommand{\bq}{\boldsymbol{q}}
\newcommand{\br}{\boldsymbol{r}}
\newcommand{\qmax}{q_{\mathrm{max}}} 
\newcommand{\R}{\mathbb{R}}
\newcommand{\N}{\mathbb{N}}
\newcommand{\BP}{\mathbb{P}}
\newcommand{\BX}{\mathbb{X}}
\newcommand{\BZ}{\mathbb{Z}}
\newcommand{\BY}{\mathbb{Y}}
\newcommand{\BE}{\mathbb{E}}
\newcommand{\BT}{\mathbb{T}}
\newcommand{\tT}{\tilde{\mathbb{T}}}
\newcommand{\bS}{{\bf S}}
\newcommand{\bB}{{\bf B}}
\newcommand{\cS}{{\mathcal S}}
\newcommand{\eps}{\varepsilon}
\newcommand{\bea}{\begin{eqnarray}}
\newcommand{\eea}{\end{eqnarray}}
\newcommand{\bean}{\begin{eqnarray*}}
\newcommand{\eean}{\end{eqnarray*}}
\newcommand{\1}{{\bf 1}}
\begin{document}

\title{Percolation of even sites for enhanced random sequential adsorption}
\author{Christopher J. E. Daniels\thanks{Department of Mathematical Sciences,
 University of Bath, Bath BA2 7AY, United Kingdom.
Email: \texttt{ch.dnls@gmail.com}. Supported by an EPSRC studentship}
 ~and
Mathew D. Penrose\thanks{Department of Mathematical Sciences, University of
 Bath, Bath BA2 7AY, United Kingdom.
Email: \texttt{m.d.penrose@bath.ac.uk}
}}

\maketitle
\begin{abstract} 
Consider random sequential adsorption on a chequerboard lattice
with arrivals at rate $1$ on light squares and at rate $\lambda$
on dark squares. Ultimately, each square is either occupied, or blocked by
an occupied neighbour. Colour the occupied dark squares and blocked light
 sites {\em black}, and the remaining squares {\em white}.
Independently
at each meeting-point of four squares, allow diagonal
connections between black squares with probability $p$; 
otherwise allow diagonal connections between white squares.
 We show that there is a critical surface of pairs $(\lambda, p)$, containing 
the pair $(1,0.5)$, such that for $(\lambda, p)$ 
lying above (respectively, below) the critical surface the
 black (resp. white) phase percolates, 
and on the critical surface neither phase percolates.
\end{abstract}
\textbf{Key words:} Dependent percolation, random sequential adsorption, 
critical surface
\newline
\textbf{MSC:} 60K35, 82B43

\maketitle

\section{Introduction}
\label{secintro}
Random sequential adsorption (abbreviated RSA throughout this paper)
 is a term for a family of probability models for irreversible particle deposition. Particles arrive at random locations and times onto a surface, and if accepted a  particle blocks nearby locations on the surface from accepting future arrivals. 
Such models are of physical interest, as a modal for coating of a
surface; see for example \cite{Evans,Privman}.
We consider
 a discrete version of RSA on the
 initially empty
 integer lattice 
$\mathbb{Z}^2$, with the arrival time at a lattice site $x$ given by
 an exponential random variable $T_x$ with parameter $\lambda_x$, with 
$\left( T_x \right)_{x \in \mathbb{Z}^2}$ independent. 
All sites are either {\em empty}, {\em occupied} or {\em blocked}; 
an arrival at an empty site $x$ causes it to become
permanently occupied and all 
 adjacent sites (that is, sites $y$
 such that $|x-y|=1$ where $|\cdot|$ denotes the
 Euclidean norm) to become permanently blocked.
 If $\sup_x \lambda_x < \infty$ this model is well defined;
 see \cite{PenroseRSANotFail}. On this lattice we define the even 
(respectively, odd) sites to be those at an even
(respectively, odd)
 graph distance from the origin.

Ultimately, each site will be either occupied or blocked.
 The distribution of the occupied and blocked sites in this ultimate state is called the jamming distribution; under the jamming distribution the sites of $\mathbb{Z}^2$ are divided into an even phase and an odd phase,
 where the even phase consists of occupied even sites and blocked odd sites. Site percolation of the even phase was considered in \cite{RSAPaper}, in the case
 where for some $\lambda >0$ we have
$\lambda_x=1$ for odd $x$ and $\lambda_x=\lambda$ for even $x$.
The even phase is {\em monotone} in $\lambda$; that is, for 
$0 \leq \lambda < \lambda'$
there exists coupled realisations of the process just described with
parameter $\lambda$ and with parameter $\lambda'$, such that
the even phase for parameter $\lambda$ is 
contained in the 
 even phase for parameter $\lambda'$.

Penrose and Rosoman \cite{RSAPaper} proved that the critical parameter 
$\lambda$ for RSA on the integer lattice $\mathbb{Z}^2$ is strictly
 greater than $1$. The proof of this uses an {\em enhanced RSA}
 (denoted eRSA below)
 model on a new lattice called $\Lambda$ throughout this paper.
We associate with each site $x\in\mathbb{Z}^2$ a site $x^\prime := x+(1/2,1/2)$.
The lattice $\Lambda$ has
vertex set $\cup_{x \in \mathbb{Z}^2} \{x,x^\prime\}$,
with an edge between sites $x\in\mathbb{Z}^2$ and $y\in\mathbb{Z}^2$ if
$|x-y|=1$,
and an edge between 
$x^\prime$ and $y$ if $|x^\prime-y|=\frac{\sqrt{2}}{2}$ (here
$|\cdot|$ is the Euclidean distance). 
 We refer to the added sites $x^\prime$ as diamond sites, and the original
 sites $x$ as octagon sites;
each octagon site has degree 8 and each diamond site has degree 4
 (see Fig \ref{fig:RSA tiling}). 
\begin{figure}[htbp]
	\centering
		\includegraphics{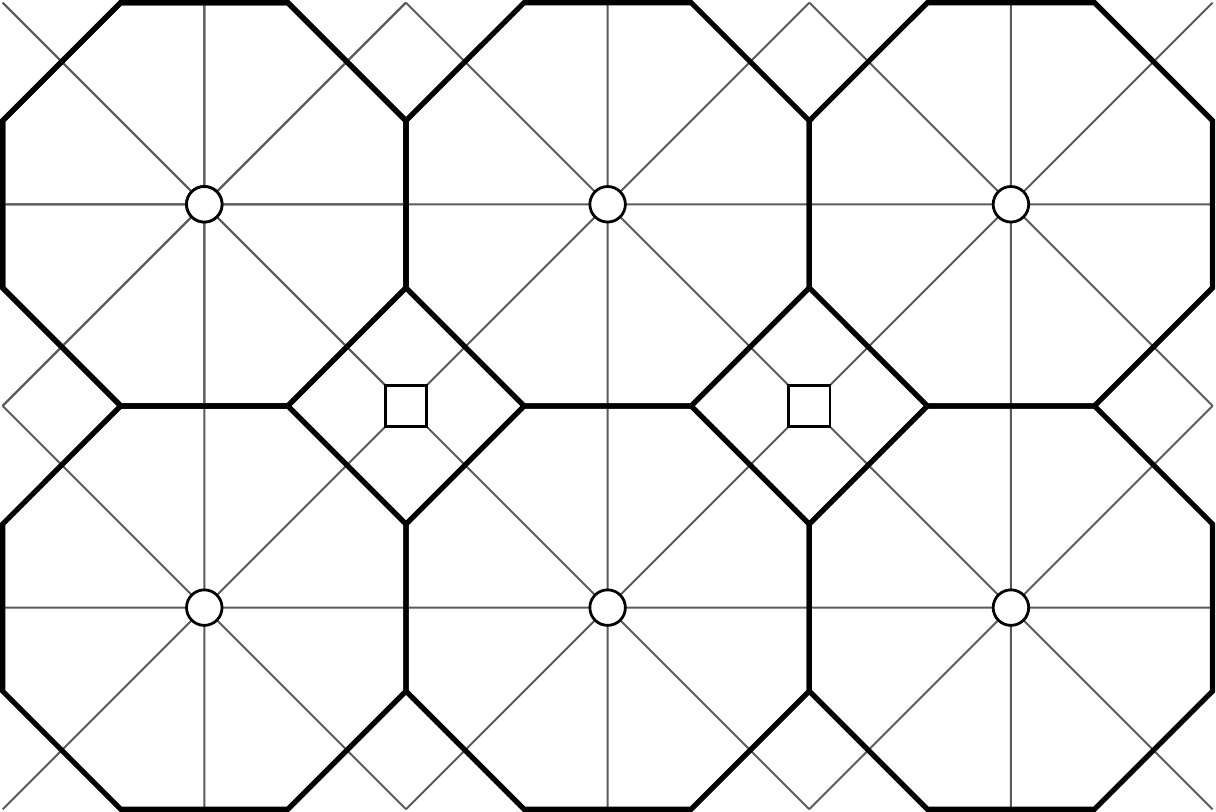}
		\caption{A section of the lattice used for enhanced RSA
and the associated tiling of $\R^2$.
The small circles represent octagon sites and the small squares are
diamond sites.
}
	\label{fig:RSA tiling}
\end{figure}

 We introduce  an {\em enhancement parameter} $p\in[0,1]$.
Each of the diamond sites $x^\prime$ is independently taken
 to be in the even phase with probability $p$, and otherwise in the odd phase.
Considering 
percolation on this new lattice, where a site is considered {\em black} if it is
 in the even phase and otherwise {\em white}, we say the even (resp. odd) phase
 {\em percolates} if there is an infinite component of 
black (resp. white) sites.

Taking the special case with $p=1$ amounts to always allowing diagonal
connections between black sites of $\BZ^2$, and never  allowing
diagonal connections between white sites. Taking $p=0$ 
amounts to the opposite.
 Considering the enhanced model enables us to interpolate continuously
between these extremes.
Moroever, this model enjoys a duality relation whereby
the even phase for parameters $(\lambda,p)$ has
the same distribution as the odd phase for parameters
$(1/\lambda,1-p)$ (see Lemma \ref{lemdual} below).

In this paper we consider the enhanced model in its own right, with 
a further parameter $\lambda \in \R_+ := (0,\infty)$ and with
 $\lambda_x=1$ for odd $x$ and $\lambda_x=\lambda$ for even $x$.
For $p \in[0,1]$ we define the critical values
\begin{align*}
\lambda^+_c(p)  
        & := \inf\{\lambda : \text{ the even phase percolates in eRSA}
(\lambda, p) \};
\\
\lambda^-_c(p)  &:= \sup\{\lambda : \text{ the odd phase percolates in eRSA}
(\lambda,p)  
\}.
\end{align*}
 It is natural to ask whether these values coincide,
and if so, to try
 to understand the behaviour of the critical
 surface in $(\lambda,p)$-space for this model; for example, the symmetry
 suggests that the pair $(\lambda=1,p=1/2)$ should be critical. 
Our main result provides some information on these issues.

\begin{theorem}
\label{thm:invCrit}
(i) For each $p \in [0,1]$ we have
$\lambda_c^-(p) \leq \lambda_c^+(p)$, with
equality whenever $ 0 < p < 1$.

(ii) For each $p \in [0,1]$
there is no percolation
of the even phase for
  eRSA with parameters
$(\lambda^+_c(p),p)$, and
 no percolation
of the odd phase for
  eRSA with parameters
$(\lambda^-_c(p),p)$.

(iii) It is the case that $\lambda_c^+(1/2)=1$.

(iv)
For any $\varepsilon \in (0,1/2)$, the functions
 $\lambda_c^+:[\varepsilon, 1] \to \mathbb{R}_+$
and
$\lambda_c^-:[0,1-\varepsilon] \to \mathbb{R}_+$
 are strictly decreasing and Lipschitz, and
the inverse of the function
 $\lambda_c^+:[\varepsilon, 1-\eps] \to [\lambda_c^+(1-\eps) , 
\lambda_c^+(\eps) ]$
is also strictly decreasing and Lipschitz.
\end{theorem}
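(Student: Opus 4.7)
The plan is to prove parts (i)--(iv) in sequence, with later parts building on earlier ones.

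For (i), the inequality $\lambda_c^-(p) \leq \lambda_c^+(p)$ reduces to excluding simultaneous percolation of both phases. The lattice $\Lambda$ is designed so that at every meeting-point of four squares exactly one of the two possible diagonal pairings is present, making the black and white site processes a matched pair of planar site percolations; a standard 2D argument of Zhang/Kesten type then forbids the coexistence of infinite clusters of both colours. For the equality when $p \in (0,1)$, the task is to rule out a non-trivial interval $(\lambda_c^-(p), \lambda_c^+(p))$ on which neither phase percolates. In this regime, the finite-energy property supplied by $p \in (0,1)$ at the diamond sites, combined with planar duality and a Russo--Seymour--Welsh / sharp-threshold input, promotes the large-box crossings of either colour (which must exist by the matched-pair property) into an infinite cluster, collapsing the gap.

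For (ii), I would use a Zhang-type argument tailored to the matched-pair planar structure of $\Lambda$. If $\theta^+(\lambda_c^+(p)) > 0$, then by FKG (applied to the increasing event that the origin belongs to an infinite black cluster) and translation ergodicity, with positive probability a large square box has four disjoint infinite black arms, one in each cardinal direction; by matched-pair planarity this forces a configuration incompatible with the no-coexistence result from (i). The odd-phase statement follows symmetrically, and the boundary cases $p \in \{0,1\}$ are handled by the same argument applied to the resulting specific 2D site percolation models. Part (iii) is then immediate: the duality of Lemma \ref{lemdual} gives $\lambda_c^+(p) = 1/\lambda_c^-(1-p)$, and at $p = 1/2$, combined with $\lambda_c^+(1/2) = \lambda_c^-(1/2)$ from (i), one concludes $\lambda_c^+(1/2)^2 = 1$.

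Part (iv) is where the main technical obstacle lies. Monotonicity of the critical functions follows from the stochastic monotonicity of the even phase in $\lambda$ (stated in the introduction) and in $p$ (obvious from independence of the diamond colourings). For the strict decrease, the Lipschitz bounds, and the matching Lipschitz bound on the inverse, I would use a two-sided enhancement argument trading $p$ against $\lambda$ at a uniformly bounded rate on compact sub-intervals. Given $p, p' \in [\eps, 1-\eps]$ with $p > p'$, the goal is to construct a coupling between eRSA$(\lambda, p)$ and eRSA$(\lambda + C(p-p'), p')$ showing that any infinite black cluster in the first realisation survives in the second, with a symmetric construction giving a lower bound $c(p-p')$ on the necessary shift in $\lambda$ to compensate; together these yield Lipschitz continuity of both $\lambda_c^+$ and its inverse. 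The hardest step is obtaining the two-sided uniform exchange rates $c$ and $C$, which requires quantifying how a gain in diamond blackness can be offset by a change in even-arrival rate through the RSA dynamics, uniformly over the compact parameter region.
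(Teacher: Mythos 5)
Your outline identifies the right cast of characters (duality, FKG, RSW, sharp thresholds, an exchange rate between $p$ and $\lambda$), and your derivation of (iii) from (i) plus Lemma \ref{lemdual} is a valid alternative to the paper's direct symmetry computation. But the proposal leaves the two steps that constitute essentially all of the work as unproven assertions. First, the claim that for $p\in(0,1)$ the gap $(\lambda_c^-(p),\lambda_c^+(p))$ collapses because ``finite energy at the diamond sites, combined with RSW and a sharp-threshold input, promotes large-box crossings into an infinite cluster'' is exactly Proposition \ref{critequal}, and you give no mechanism for it. The difficulty is that the state space here is not a finite product of independent coordinates: the arrival times are continuous and the colouring has long-range dependence, so no off-the-shelf sharp-threshold theorem applies. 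The paper has to discretize time into blocks of width $\delta(n)$, transfer the model to a torus to obtain the symmetry of order $m$ required by Proposition \ref{SharpNM}, and then — crucially — compensate the discretization error by an essential-enhancement argument (Corollary \ref{cor:CompensateEqual}) showing that the effect of the delay parameter $\delta$ on crossing probabilities is dominated by that of the enhancement parameter $p$. Without that compensation the discretized event is not comparable to the continuous one and the sharp-threshold conclusion does not transfer back. Similarly, your ``two-sided uniform exchange rate'' in (iv) is Proposition \ref{cor:halfCorrection}, proved via Margulis--Russo formulae and a delicate local-surgery comparison of pivotality probabilities for octagon versus diamond sites (Lemma \ref{thm:diaPivToEvenPiv} and the machinery of Lemmas \ref{5.2}--\ref{pivInequal}); naming the needed estimate is not the same as obtaining it.

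There is also a concrete logical gap in your part (ii). A Zhang-type four-arm argument derives a contradiction from the \emph{simultaneous} percolation of both phases (or from self-duality at the specific parameter point). At $(\lambda_c^+(p),p)$ there is no self-duality, and percolation of the even phase there does not force percolation of the odd phase, so ``four infinite black arms'' yields no contradiction with no-coexistence. The paper instead obtains (ii) from the fact that the percolation region $S$ is \emph{open}: by the finite-size criterion (Lemma \ref{LemFiniteBox} together with Lemma \ref{percCondition}), $S$ coincides with the set where $h_3(n,\lambda,p)>1-\kappa$ for some $n\geq n_0$, and each such condition is an open condition in $(\lambda,p)$ by continuity of $h_3(n,\cdot,\cdot)$ for fixed $n$. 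If the even phase percolated at $\lambda_c^+(p)$ it would percolate for slightly smaller $\lambda$, contradicting the definition of $\lambda_c^+$. You would need this finite-size characterization (or an equivalent) to make (ii) — and indeed the openness used for strict monotonicity in (iv) — go through.
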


We conjecture that $\lambda_c^+(p) = \lambda_c^-(p)$ 
for all $p \in [0,1]$ but we prove  this
only for $p \in (0,1)$. It is clear from the theorem
that the inverse function of $\lambda_c^+(\cdot)$ is the function
$p_c^+(\cdot)$ defined by
\begin{align*}
p_c^+(\lambda) 
  & := \inf\{p : \text{ the even phase percolates in eRSA}
(\lambda, p) \}.
\end{align*}

An outline of the proof will be provided in Section \ref{secProof}
with details filled in in subsequent sections.
Most of the work goes into showing that
if the {\em odd} phase does {\em not} percolate at a certain $(\lambda,p)$
(Assumption A), then after an arbitrarily small increase in either 
$\lambda $ or $p$ the {\em even} phase {\em does} percolate (Conclusion B). 
The strategy to prove this
goes as follows. 
Under Assumption A,
we shall adapt known methods to deduce 
that the even phase crosses an arbitrarily large rectangle
 of aspect ratio
3 the long way, with non-vanishing probability. Then using a suitable
{\em sharp thresholds} result for increasing events in
a finite product space (presented in Section \ref{secthresh},
and perhaps of independent interest),
we shall deduce in Proposition \ref{critequal}
 that after increasing $\lambda$ or $p$ we have  
a crossing of such a rectangle with probability close to 1, and then
a standard comparison with 1-dependent  percolation yields Conclusion B. 

To use our sharp thresholds result, we shall
  discretize time.
We shall demonstrate that the approximation error
involved in the discretization 
 can be compensated for with a slight increase in the parameter $\lambda$
or $p$ that
 vanishes as the size of the rectangle approaches infinity.
We do this using the method of {\em essential  enhancements}
(see for example \cite{AG}) to show that the
effect of the discretization parameter is comparable to that
of the enhancement parameter $p$. 

Our strategy outlined above 
 is    related to  to a method used by Bollob\'as and  Riordan
 in \cite{VoroPerc} to prove that the critical value for Voronoi
 percolation in the plane is $1/2$, but is distinguished 
by our  use of essential-enhancement techniques  in the last
step rather than the coupling construction appearing at a 
comparable stage in \cite{VoroPerc}. This method might be of 
use elsewhere. Indeed,   we believe that these methods are likely to  be relevant
to showing similar results on existence of a sharply defined
and smooth critical surface for 
other percolation models having two
or more parameters 
and long-range dependence, 
provided correlations are sufficiently rapidly decaying
(in the present instance this holds because of Lemma
\ref{affectlem}  below),
and identifying actual critical values for such models  with
sufficient symmetry.

For example, consider random sequential  deposition of monomers 
(at rate 1) and dimers (at rate $\alpha$) onto the vertices
of the triangular lattice with each monomer accepted
if it arrives at a previously unoccupied site,
and each  dimer accepted if it arrives at a
previously unoccupied pair of neighbouring sites. Suppose each
monomer (respectively dimer) is black with probability $p \in [0,1]$
(respectively $q \in [0,1]$). Ultimately all sites will
be occupied, and 
for fixed $\alpha$
we would expect that our methods could be adapted to show that there
 is a smooth critical
surface in $(p,q)$-space passing through $(1/2,1/2)$.

As another example, consider sequential deposition
of hard and soft particles, where
the hard particles exclude each other in RSA fashion, and any  point 
 not occupied by a hard particle acquires
the colour of the first soft particle to arrive
covering it; suppose hard particles
are black with probability $p$ and soft particles
are black with probability $q$. This could be considered, with
deposition either
on the vertices of the triangular lattice with hard particles
excluding each other from neighbouring sites,
or in the continuum $\R^2$  with the particles given by unit
disks (or some other shape).
Again, it may be possible to adapt our methods to these models.

The continuum version of the last model without the hard particles
(and therefore with finite range dependences)
amounts to the so-called `confetti percolation' or `dead leaves' model,
for which similar questions have been considered  
in \cite{Hirsch} and  \cite{Muller}.
In the latter paper, M\"uller deploys a different sharp threshold
type result with weaker symmetry requirements; it would
be very interesting to explore the possible application of
those ideas in models such as those mentioned above.

\section{Proof of Theorem \ref{thm:invCrit} }
\label{secProof}

In this section we prove our theorem, but with the proof of
 certain key steps deferred to later sections. We first assemble some
 facts based adapting known methods
to eRSA. 

For $x,y \in \BZ^2$,
we shall say that the site $x$ {\em affects}  the site $y$ if
 there is some self-avoiding
 path in $\mathbb{Z}^2$ starting at a neighbour of $x$ 
(note: not at $x$ itself)
 and ending at $y$,
 such that if the odd sites along this path are listed in order as
 $x_1, x_2,\ldots,x_m$, then $T_{x_1}\leq T_{x_2}\leq \cdots \leq T_{x_m}$.
\begin{lemma}
\label{affectlem}
Let $x,y \in \BZ^2$ with $x \neq y$.
With probability 1,
 if $x$ does not affect $y$, then no change to the arrival time
 at $x$ with all other arrival times remaining fixed can alter the state of $y$.
Moreover,  if
 $d(x,y)$ denotes the graph distance between $x$ and $y$ in $\BZ^2$, then
\bea
\BP [ x ~ {\rm affects } ~ y]
 \leq \frac{4^{d(x,y)}}{\lfloor d(x,y) /2 \rfloor   ! }.
\label{eqaffects}
\eea
\end{lemma}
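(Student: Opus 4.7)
The plan is to handle the two assertions of the lemma separately: the first is a qualitative statement about the causal structure of the RSA dynamics, and the second is a Peierls-type union bound.

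For the causal statement, I would couple two realisations $\omega, \omega'$ of the arrival-time field differing only in the value at $x$, and track the set $D$ of sites whose ultimate states differ between them. The immediate effect of altering $T_x$ is confined to $x$ and its four neighbours, so any $y \in D \setminus \{x\}$ must lie at the end of a chain of disagreements whose sites are successively adjacent. Working inside a finite box (to realise the RSA as a finite sequence of events, with the infinite-volume limit handled via \cite{PenroseRSANotFail}), I would proceed by induction on the order in which RSA events occur: whenever a disagreement first appears at a site $w$, it does so because some already-disagreeing neighbour $z$ of $w$ either occupies $w$ in one realisation while leaving it available in the other, or is itself blocked in one realisation and not the other so that $w$'s future evolution diverges. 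Careful bookkeeping of which arrival time must precede which in order for the propagation to occur yields exactly the monotonicity condition on odd-site arrival times along the chain. Since the chain produced may revisit vertices, I would loop-erase it to obtain a self-avoiding path from a neighbour of $x$ to $y$; loop erasure preserves the endpoints and preserves the ordering of arrival times at the surviving odd sites.

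For the probability estimate, set $n := d(x,y)$. Any self-avoiding path from a neighbour of $x$ to $y$ has at least $n-1$ edges and hence at least $n$ vertices, and since parity alternates along paths in $\BZ^2$, at least $\lfloor n/2 \rfloor$ of those vertices are odd sites. For a fixed path with $k$ odd sites, the probability that the independent $\mathrm{Exp}(1)$ arrival times at those sites fall in a specified order is exactly $1/k!$. The number of self-avoiding paths of length $m$ emanating from a fixed starting vertex is at most $4 \cdot 3^{m-1}$, so summing over the four choices of neighbour of $x$ and over all lengths $m \geq n - 1$, and using the super-exponential growth of $\lfloor m/2 \rfloor !$ to collapse the series to its leading term, yields the claimed bound $4^{d(x,y)}/\lfloor d(x,y)/2 \rfloor !$ with considerable slack in the constant.

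The main obstacle will be the inductive step of the causal argument. Identifying the exact timing constraint governing propagation from $z$ to $w$ requires a case analysis on the parities of $z$ and $w$ and on the precise nature of the flip (whether the state of $z$ itself has changed between the two couplings or whether $z$ has only ``declined'' to occupy/block $w$ at the relevant moment); one must verify that in every case the arrival time at the newly-added odd site, if any, falls at the right place in the chain so that the required monotonicity on odd sites is maintained. The combinatorial union bound in the second part is by comparison routine once the ``affects'' relation has been firmly established.
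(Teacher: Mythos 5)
Your proposal is sound and lands the same union-bound computation for \eqref{eqaffects} as the paper (which likewise dispatches it in one line), but the causal half is packaged differently. The paper avoids coupling and disagreement sets altogether: it partitions $\BZ^2$ into static \emph{generations} $G_0=\{x\}, G_1, G_2,\dots$, where a site enters $G_{k+1}$ when its arrival time beats all neighbours outside earlier generations, uses \eqref{eqaffects} plus Borel--Cantelli to show these generations exhaust $\BZ^2$ almost surely (this replaces your finite-box reduction via \cite{PenroseRSANotFail}), and then inducts on the generation index, exploiting the fact that a generation-$(k+1)$ site is occupied precisely when all of its earlier-arriving neighbours (which sit in earlier generations) are blocked. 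That Boolean description makes the propagation step read almost in one line --- ``if a neighbour of $z$ is affected then so is $z$'' --- whereas your dynamic induction on the order of RSA events must additionally justify the loop-erasure step (which does work: chronological loop erasure yields an increasing-index subsequence of the original chain, so the non-decreasing order of arrival times at the surviving odd sites is inherited). The crux you correctly flag as the main obstacle --- verifying that when a disagreement crosses an edge the newly appended odd site, if any, respects the required time-ordering --- is present in both arguments and is treated tersely in the paper too (and at greater length in \cite{RSAPaper}); neither route escapes that case analysis, so your plan is a legitimate alternative rather than a gap, trading the paper's cleaner inductive invariant for a more hands-on but perhaps more transparent picture of how influence spreads.
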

\begin{proof}
First note that (\ref{eqaffects}) follows easily by the union bound.

Partition the sites of $\BZ^2 $ into {\em generations}
$G_0,G_1,G_2,\ldots$, defined as follows.
Set $G_0 = \{x\}$. Inductively, suppose for some $k$ that
$G_0,G_1,\ldots,G_k$ have been defined.
 For each $z \in \BZ^2 \setminus \cup_{i=0}^k G_i$ we put
$z \in G_{k+1}$ if and only if $T_z < T_{w}$ for all 
 $w \in \BZ^2 \setminus \cup_{i=1}^k G_i$ with $w$
 neighbouring $z$.
Using   (\ref{eqaffects}) and  the first Borel-Cantelli lemma, one
can show that
 with probability 1, the sets
$G_0,G_1,G_2,\ldots$ do indeed partition $\BZ^2$. 

We now prove the first assertion of the lemma, by induction on
the generation containing $y$. If $y \in G_1$ and $x$ does
not affect $y$, then $x$ is not a neighbour of $y$ so
all neighbours of $y$ have later arrival times than $T_y$,
and therefore $y$ becomes occupied  rather than blocked regardless
of the arrival time at $x$.

For the inductive step, suppose for some $k \in \N$ that
 the assertion of the lemma holds for all $y\in \cup_{i=1}^k G_i$.
Suppose that $z \in G_{k+1}$, and that $x$ does not 
affect $z$. Enumerate the neighbouring sites
of $z$ with earlier arrival times than  $z$ as
$y_1,\ldots,y_j$. These sites must all lie in $\cup_{i=0}^k G_i$
and moreover are not affected by $x$ (else $z$ would also be affected
by $x$). Hence by the inductive hypothesis, the occupied/blocked
status of sites $y_1,\ldots,y_j$ is not affected by any change to
the arrival time at $x$, and hence the status of site $z$ is
also not affected by such a change ($z$ is occupied if all
of $y_1,\ldots,y_j$ are blocked). This completes the induction. 
\end{proof}

Let $\BP_{\lambda,p}$ denote the probability measure  
associated with the enhanced RSA model
 where $\lambda$ is the rate of arrivals at even
sites and $p$ is the enhancement parameter.
We next provide the {\em Harris-FKG} inequality for this
model.  For any two black/white colourings $\alpha,\beta$ of 
the  vertices of $\Lambda$ (i.e., of the faces of the tiling),
let us write $\alpha \prec \beta$ if the set of black
sites in $\alpha $ is contained in the set of black
sites in $\beta$.
Let us say that an event $E$, defined in terms of
the  colouring  induced by the RSA model, 
is {\em black-increasing} if it has
the following property:
 for any two colourings $\alpha, \beta $ with
 $\alpha \prec \beta$,
if $\alpha \in E$ then $\beta \in E$.
For example, $H_{n,\rho}$ is black-increasing.
 Similarly, we say $E$ 
is {\em white-increasing} if for 
  any two colourings $\alpha, \beta $ with $\beta \prec \alpha$,
if $\alpha \in E$ then $\beta \in E$. 
\begin{lemma}
\label{lemHarris} {\rm [Harris-FKG inequality.]}
Let $\lambda >0,p \in [0,1]$. 
If $E$ and $F$ are both black-increasing events or
are  both white-increasing events then
$\BP_{\lambda,p}(E \cap F) \geq 
\BP_{\lambda,p}(E )  
\BP_{\lambda,p}( F)$.  
\end{lemma}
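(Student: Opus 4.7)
The plan is to express the colouring on $\Lambda$ as a deterministic function of a family of independent random variables, verify that this function is coordinatewise monotone in an appropriate sense, and then invoke Harris's inequality for product measures. The underlying independent variables are the arrival times $(T_x)_{x \in \BZ^2}$, with $T_x$ exponential of rate $\lambda_x$, together with an independent family $(\eta_{x'})$ of Bernoulli$(p)$ random variables indexed by the diamond sites, with $\eta_{x'}=1$ encoding that $x'$ is coloured black.

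The heart of the argument is a single-site monotonicity lemma: decreasing $T_x$ at any even site, increasing $T_y$ at any odd site, or increasing any $\eta_{x'}$ can only enlarge the set of black sites. Monotonicity in the enhancement variables is immediate from the definition. For the arrival-time monotonicity, consider decreasing $T_{x_0}$ at a single even $x_0$ to some $T'_{x_0}<T_{x_0}$, with all other arrival times fixed. A direct argument handles $x_0$ itself: if $x_0$ is occupied under $T$, then every odd neighbour $y$ of $x_0$ with $T_y<T_{x_0}$ must be blocked rather than occupied under $T$, so no neighbour of $x_0$ can be occupied by time $T'_{x_0}<T_{x_0}$ in the modified dynamics either, and hence $x_0$ becomes occupied at time $T'_{x_0}$ under $T'$. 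To propagate the inequality to all other sites, I would use the recursive description ``$z$ is occupied iff every neighbour $y$ with $T_y<T_z$ is blocked, iff every such $y$ has a neighbour $w$ with $T_w<T_y$ that is occupied'' and induct along this well-founded recursion. The generation construction in the proof of Lemma \ref{affectlem} certifies that the recursion terminates almost surely, and at each step the status of $z$ is a coordinatewise-monotone function of statuses of strictly earlier-arriving sites, so the required direction of monotonicity (more black sites as even arrivals come earlier and odd arrivals come later) propagates outwards from $x_0$.

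Granted the monotonicity lemma, a black-increasing event in the $\Lambda$-colouring corresponds to an event in the product space $\prod_x [0,\infty) \times \prod_{x'} \{0,1\}$ that is monotone increasing in the order in which $-T_x$ is the coordinate at an even site $x$, $T_y$ the coordinate at an odd site $y$, and $\eta_{x'}$ the coordinate at a diamond site; a white-increasing event becomes monotone decreasing in the same order. The standard Harris (FKG) inequality for countable products of independent random variables, applied to cylinder approximations and passed to the limit, then gives positive correlation of two such monotone events of the same direction, yielding the claimed inequality. The main obstacle is the single-site monotonicity lemma: although it is essentially the single-site analogue of the known monotonicity of the even phase in $\lambda$ quoted in the introduction, a single change in an arrival time can cascade through arbitrarily many generations, and the recursion must be set up so that the direction of the change is preserved at every step of the cascade; this is precisely where the rapid decay of dependence encoded in Lemma \ref{affectlem} is needed.
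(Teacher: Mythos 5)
Your argument is correct and is essentially the content of the proof the paper outsources: the paper's own ``proof'' simply cites Section 5 of Penrose--Sudbury for the Harris--FKG inequality for RSA (which is established there by exactly this device of writing the jammed configuration as a coordinatewise monotone function of the arrival times, decreasing in the even ones and increasing in the odd ones, and invoking Harris for product measures) and then appends the independent enhancement variables. The one place to be careful in your single-site monotonicity induction is that the set of ``earlier-arriving neighbours'' itself changes at $x_0$ and at $x_0$'s neighbours, so the two recursions run over slightly different index sets; the min/max structure of the recursion (occupied even $=$ all earlier odd neighbours blocked, blocked odd $=$ some earlier even neighbour occupied) means each comparison only ever requires the induction hypothesis on the smaller of the two neighbour sets, which is what makes the induction close.
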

\begin{proof}
The Harris-FKG inequality  
for RSA is given in section 5 of \cite{PenSud} and
may then be deduced
 in the  enhanced RSA model using the independence of the
 enhancement variables.
\end{proof}
We shall refer to the following lemma as a {\em duality} relation.
\begin{lemma}
\label{lemdual}
Let $\lambda >0, p \in [0,1]$. Then the even phase
of eRSA with parameters $(\lambda,p)$ percolates,
if and only if the odd phase of 
of eRSA with parameters $(1/\lambda,1-p)$ percolates.
\end{lemma}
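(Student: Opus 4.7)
The plan is to construct an explicit measure-preserving isomorphism from eRSA$(\lambda,p)$ to eRSA$(1/\lambda,1-p)$ that carries the odd phase of the former onto a spatial translate of the even phase of the latter, then to invoke translation-invariance of the event that an infinite cluster exists. The whole proof is bookkeeping: three elementary operations (a unit spatial shift, a global time rescaling by $\lambda$, and a Bernoulli flip of the diamond variables) must be combined so that their effects on parities, rates, and colours cancel out consistently.

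Concretely, let $(T_x)_{x\in\BZ^2}$ and $(B_{x'})_{x'}$ denote the arrival-time and diamond-colour variables of $P := \mathrm{eRSA}(\lambda,p)$, and define transformed variables by
\[
\tilde T_x := \lambda\, T_{x+(1,0)}, \qquad \tilde B_{x'} := 1 - B_{x'+(1,0)}.
\]
Translation by $(1,0)$ swaps the parity of each octagon site, so after the shift alone odd sites carry rate $\lambda$ and even sites carry rate $1$; multiplying by $\lambda$ then rescales these to the canonical eRSA$(1/\lambda,\cdot)$ rates, namely $1/\lambda$ at even sites and $1$ at odd sites. Because $\lambda>0$, the multiplicative rescaling preserves the ordering of arrival times at each site and its neighbours, and therefore the RSA dynamics driven by $\tilde T$ produces the same occupied/blocked pattern at site $x$ as the dynamics driven by $T$ produces at site $x+(1,0)$. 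Since the $\tilde B_{x'}$ are independent Bernoulli$(1-p)$, the pair $(\tilde T,\tilde B)$ has the distribution of eRSA$(1/\lambda,1-p)$.

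It remains to identify phases. For an octagon site $x$, membership in the even phase of the transformed process means ($x$ even and occupied) or ($x$ odd and blocked); by the preceding paragraph this is equivalent to ($x+(1,0)$ odd and occupied) or ($x+(1,0)$ even and blocked), i.e.\ $x+(1,0)$ lies in $P$'s odd phase. For a diamond $x'$, we have $\tilde B_{x'}=1$ iff $B_{x'+(1,0)}=0$, so $x'$ is in the transformed even phase iff $x'+(1,0)$ is in $P$'s odd phase. Since translation by $(1,0)$ is an automorphism of the lattice $\Lambda$, the even phase of the transformed process equals the translate of $P$'s odd phase by $-(1,0)$. The existence of an infinite black cluster is invariant under translation, which gives: the even phase of eRSA$(1/\lambda,1-p)$ percolates iff the odd phase of eRSA$(\lambda,p)$ percolates. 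Replacing $(\lambda,p)$ by $(1/\lambda,1-p)$ (the substitution is an involution) yields the form stated in the lemma. No step presents a real obstacle; the only thing to watch is that the parity swap from the shift, the rate swap from the rescaling, and the colour swap $B\mapsto 1-B$ must all be applied together so that "even phase'' on one side is identified with "odd phase'' on the other.
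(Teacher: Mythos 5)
Your proof is correct and is essentially the paper's argument: rescale all arrival times by $\lambda$ and flip the diamond colours, so that the white phase of eRSA$(\lambda,p)$ becomes distributed as the black phase of eRSA$(1/\lambda,1-p)$. The only difference is that you make explicit the parity-swapping unit translation (and the resulting translation-invariance step) that the paper's two-line proof leaves implicit; this is a harmless and indeed welcome addition of detail, not a different method.
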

\begin{proof}
Consider first the eRSA process with parameters $(\lambda,p)$.
Now re-scale time by multiplying all arrival times by a factor
of $\lambda$; the rescaled arrival times are exponential
with rate $1$ at even sites and rate $1/\lambda$ at odd sites.
If we then also interchange the colours, then the new set of
black sites is
a realization of eRSA with parameters $(1/\lambda,1-p)$.  
\end{proof}

Given a rectangle $R=[a,b]\times[c,d]$, and $r  > 0 $,
 define $\edense(R,r)$ to be the event that no
 site in $R$ is affected by any site outside $[a-r,b+r]\times[c-r,d+r]$.
Using (\ref{eqaffects}), one can readily prove the following which
has appeared previously as Lemma 3.3 of \cite{RSAPaper}.

\begin{lemma}
\label{3.3}
 Let $\lambda>0$, $\rho\geq1$. Given $s>0$, let 
$R_s=[1,\lfloor s\rfloor]\times[1,\lfloor\rho s\rfloor].$ Then 
$\BP_\lambda[\edense(R_s,2\lfloor s^{1/2}\rfloor)]\to1$ as 
$s\to\infty$. Moreover, $\edense(R,r)$ depends only on the arrival
 times within the larger rectangle. 
\end{lemma}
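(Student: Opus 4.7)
The plan is to prove the first assertion by a union bound built on Lemma \ref{affectlem}, and the second (measurability) assertion by a path-surgery argument that restricts any witnessing ``affecting chain'' to the enlarged rectangle.

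First I would write $B_s := [1 - 2\lfloor s^{1/2} \rfloor, \lfloor s \rfloor + 2\lfloor s^{1/2} \rfloor] \times [1 - 2\lfloor s^{1/2} \rfloor, \lfloor \rho s \rfloor + 2\lfloor s^{1/2} \rfloor]$ for the enlarged rectangle. The bad event $\edense(R_s, 2 \lfloor s^{1/2} \rfloor)^c$ is contained in the union over $y \in R_s \cap \BZ^2$ and $x \in \BZ^2 \setminus B_s$ of the events $\{x \text{ affects } y\}$. For any such pair one has $d(x,y) \geq 2 \lfloor s^{1/2} \rfloor + 1$; there are at most $4d$ lattice sites $x$ at graph distance exactly $d$ from a given $y$; and Lemma \ref{affectlem} yields the individual bound $4^d / \lfloor d/2 \rfloor!$. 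Combining these,
\[
\BP_\lambda[\edense(R_s, 2 \lfloor s^{1/2} \rfloor)^c] \;\leq\; |R_s \cap \BZ^2| \cdot \sum_{d \geq 2 \lfloor s^{1/2} \rfloor + 1} \frac{4d \cdot 4^d}{\lfloor d/2 \rfloor!}.
\]
Since $|R_s \cap \BZ^2| = O(s^2)$ while the tail sum is $O(\exp(-c \sqrt{s} \log s))$ by Stirling, the right-hand side tends to $0$ as $s \to \infty$.

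For the measurability claim, I would argue as follows. Suppose some $x \notin B_s$ affects some $y \in R_s$ via a self-avoiding path $\pi = (v_0, \ldots, v_n)$ with $v_0$ a neighbour of $x$ and $v_n = y$. Let $j$ be the largest index with $v_j \notin B_s$; then the tail $v_{j+1}, \ldots, v_n$ lies entirely inside $B_s$, its first vertex $v_{j+1}$ is adjacent to a site outside $B_s$, and the arrival times at its odd sites form a subsequence of the original increasing sequence, hence are still in increasing order. Conversely, any such path entirely inside $B_s$ witnesses some affecting pair once one prepends a suitable exterior neighbour. Hence $\edense(R_s, r)$ coincides with the event that no self-avoiding path in $B_s$ starting adjacent to $\BZ^2 \setminus B_s$ and ending in $R_s$ has non-decreasing arrival times at its odd sites; this is determined by $\{T_z : z \in B_s\}$ alone.

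The main obstacle is not deep: the argument is a standard union bound combined with the super-polynomial tail provided by Lemma \ref{affectlem}. The only subtle point is ensuring the factorial decay in (\ref{eqaffects}) is strong enough to absorb the $O(s^2)$ factor coming from the number of target sites in $R_s$; since the factorial beats every polynomial, this goes through with room to spare, and the same room is what permits the inflation $r = 2\lfloor s^{1/2} \rfloor$ (only $\sim \sqrt{s}$) to suffice.
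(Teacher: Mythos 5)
Your proof is correct and follows exactly the route the paper intends: the paper gives no details, merely asserting that the lemma "can readily be proved using (\ref{eqaffects})" (citing Lemma 3.3 of \cite{RSAPaper}), and your union bound over pairs $(x,y)$ with the factorial tail beating the $O(s^2)$ volume factor is that argument. The path-surgery justification of the measurability claim is also the standard and correct one.
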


We now discuss certain box crossings. We
 construct a dependent face
 percolation model on a truncated square tiling (shown by the darker lines
in figure \ref{fig:RSA tiling})
as follows: colour the
 octagon centred at $x\in\mathbb{Z}^2$ black if $x$ is in the even phase,
 otherwise colouring it white. 
As in Section 1 we denote the diamond at the top right corner of the octagon
 centred at $x$ by $x^\prime$, and colour it black 
if the $x^\prime$ is in the even phase, and
 white otherwise.
Given  $\rho \in (0,\infty)$ and
$n \in \N$ with $\rho n \geq 1$, let
 $H_{n, \rho}$ denote the event that there is a horizontal black crossing
 of the rectangle
 \[R(2n,\rho):=[-\lfloor \rho n \rfloor, \lfloor \rho n \rfloor-1]\times
[-n, n-1]
\] 
and set 
\[h_\rho(n,\lambda,p) := \BP_{\lambda,p}(H_{n, \rho}).\]
Also, define $h^\prime_\rho(n,\lambda,p)$ similarly but
in terms of a white crossing. That is,
 $h^\prime_\rho(n, \lambda, p)$ denotes the probability
 that there is a horizontal white crossing of an arbitrary fixed 
$2\lfloor \rho n \rfloor$ by $2 n$ rectangle for
 eRSA with parameters $\lambda$ and $p$.  Note that
any given rectangle possesses either a horizontal black crossing or 
vertical white crossing.

\begin{lemma}
\label{LemFiniteBox}
There exist  constants $\kappa >0$, and 
$n_0 \in \N$, such that the even phase percolates
if there exists $n \geq n_0$ with
 $h_3 (n, \lambda, p)>1-\kappa$. 
\end{lemma}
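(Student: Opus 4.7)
The plan is a block-renormalisation argument, with the long-range dependence inherent in RSA controlled via the events $\edense$ of Lemma \ref{3.3}.

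First, I would set up a renormalised site percolation at scale $6n$. For each $v \in \mathbb{Z}^2$, define a ``good block'' event $B_v$ as the intersection of a fixed finite number $K$ of black long-way crossings of translated and rotated copies of $R(2n,3)$ placed around $6nv$. A standard geometric arrangement (compare Grimmett, \emph{Percolation}, Chapter~11) is to require two parallel horizontal crossings of abutting $6n\times 2n$ rectangles together with a perpendicular crossing of a $2n\times 6n$ rectangle straddling their common boundary, and symmetrically in the vertical direction. With such an arrangement, whenever $B_v$ and $B_{v'}$ both hold for neighbouring renormalised sites $v$ and $v'$, planar topology forces the black crossings inside $v$ and inside $v'$ to share a black site in their common sub-rectangle. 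Consequently, any infinite cluster of good blocks yields an infinite black component in eRSA, and thus percolation of the even phase.

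Second, I would truncate the dependence via $\edense$. Writing $R_v^*$ for the union of the (enlargements of the) rectangles used in the definition of $B_v$, Lemma \ref{3.3} yields $\BP_{\lambda,p}(\edense(R_v^*, 2\lfloor n^{1/2}\rfloor))\to 1$ uniformly in $v$ as $n\to\infty$. On this event, Lemma \ref{affectlem} shows that the occurrence of $B_v$ is determined by the arrival times inside an $O(\sqrt{n})$-enlargement of $R_v^*$. Since $\sqrt{n}=o(n)$, for $n$ large these enlargements are disjoint for renormalised sites at $\ell^\infty$-distance at least $2$; combined with the independence of the diamond enhancement variables, the truncated good blocks form a $1$-dependent site percolation on $\mathbb{Z}^2$.

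Third, I would use FKG together with the hypothesis to lower-bound the block probability. Each constituent crossing event is black-increasing, so Lemma \ref{lemHarris} gives
\begin{equation*}
\BP_{\lambda,p}(B_v)\;\geq\; h_3(n,\lambda,p)^K \;\geq\; (1-\kappa)^K \;\geq\; 1-K\kappa.
\end{equation*}
Combining this with the high probability of $\edense$ and invoking the Liggett--Schonmann--Stacey stochastic-domination theorem for $1$-dependent fields, one may fix $\kappa>0$ small and $n_0$ large so that, whenever $n\geq n_0$ and the hypothesis holds, the truncated good blocks dominate a supercritical Bernoulli site percolation on $\mathbb{Z}^2$. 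The infinite cluster of good blocks in the latter then produces an infinite black component in eRSA.

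The main obstacle is the first step: designing the good-block events so that they (i) chain together topologically in the renormalised lattice and (ii) remain localisable up to an $O(\sqrt{n})$ boundary layer via $\edense$. Once the geometric setup is in place, the dependence truncation, the FKG step, and the stochastic-domination comparison are all essentially off-the-shelf.
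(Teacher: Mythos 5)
Your proposal is correct and is essentially the argument the paper invokes: the paper's proof is a one-line reference to the method of Theorem 1.1 of \cite{VoroPerc}, namely exactly the renormalised comparison with $1$-dependent percolation you describe, with the dependence truncated via Lemma \ref{3.3} and the block probability driven to $1$ by the hypothesis on $h_3$. (A minor remark: the FKG step is not needed, since the union bound already gives $\BP(B_v)\geq 1-K\kappa-o(1)$.)
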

\begin{proof}
This can be proved by a similar method to Theorem 1.1 of \cite{VoroPerc},
namely comparison with 1-dependent bond percolation along with
use of Lemma \ref{3.3}.
\end{proof}

The next ingredient is an RSW type result relating the
probability of crossing a large box of one (fixed) aspect
ratio, to the probability of crossing a box of a different
aspect ratio.

\begin{lemma}
\label{LimResBetter} 
Let $\lambda>0$, $p\in [0,1]$, $\rho>0$ be fixed.
 If $\limsup_{n\to\infty}h_\rho(n,\lambda,p)>0$ then
 $\limsup_{n\to\infty}h_{\rho^\prime}(n,\lambda,p)>0$ for all $\rho^\prime>0$.
 If $\limsup_{n\to\infty}h^\prime_\rho(n,\lambda,p)>0$ then
\linebreak
 $\limsup_{n\to\infty}h_{\rho^\prime}^\prime(n,\lambda,p)>0$ for all $\rho^\prime>0$.
\end{lemma}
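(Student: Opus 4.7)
The plan is to adapt the classical Russo-Seymour-Welsh (RSW) theorem to the eRSA setting, following the strategy used by Bollob\'as and Riordan in \cite{VoroPerc} for Voronoi percolation and similar long-range dependent models. The three ingredients are the Harris-FKG inequality (Lemma \ref{lemHarris}), the invariance of the eRSA law under translations and reflections of $\BZ^2$ about integer axes and under $90^\circ$ rotations (all of which preserve the parity of lattice sites, hence the arrival-rate structure, and carry diamond sites to diamond sites so the enhancement law is also preserved), and approximate finite-range dependence coming from Lemma \ref{3.3}.

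The case $\rho'\le\rho$ is essentially immediate: since $R(2n,\rho')\subseteq R(2n,\rho)$ and these two rectangles share the same top and bottom edges, any horizontal black crossing of $R(2n,\rho)$ restricts to a horizontal black crossing of $R(2n,\rho')$, because the restricted path can exit $R(2n,\rho')$ only through its left or right sides. Hence $H_{n,\rho}\subseteq H_{n,\rho'}$, giving $h_{\rho'}(n,\lambda,p)\ge h_\rho(n,\lambda,p)$, and the same inclusion applied to white-increasing crossings gives the corresponding inequality for $h^\prime$.

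For the harder case $\rho'>\rho$, it suffices by iteration to establish a doubling step: for some $\alpha>1$, $\limsup_n h_\rho(n,\lambda,p)>0$ implies $\limsup_n h_{\alpha\rho}(n,\lambda,p)>0$. Extract a subsequence $n_k\to\infty$ along which $h_\rho(n_k,\lambda,p)\ge c>0$. Using the rotation and Case 1 as needed, reduce to $\rho\ge 1$. Consider two horizontal translates $B_1,B_2$ of $R(2n_k,\rho)$ overlapping in a square $S$ of side $2n_k$. Each $B_i$ is horizontally crossed by a black path with probability at least $c$; to glue these two horizontal crossings one uses a vertical black crossing of $S$, whose probability is bounded below by the square-root trick combined with reflection about the vertical integer line bisecting $S$ (producing four reflection-related half-square crossings whose union is implied by the square crossing obtained from Case 1 applied to $h_\rho(n_k,\lambda,p)$). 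The Harris-FKG inequality combines the three events into a horizontal black crossing of $B_1\cup B_2$, which is a rectangle of aspect ratio strictly larger than $\rho$. Iterating the step yields positive $\limsup$ at arbitrarily large aspect ratios. The corresponding statement for $h^\prime_\rho$ follows by repeating the argument on white-increasing events (which also satisfy FKG by Lemma \ref{lemHarris}) or, alternatively, by invoking the duality of Lemma \ref{lemdual}.

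The main technical obstacle is that the crossing events are not measurable with respect to arrival times in any bounded region, so reflection symmetry and finite-range combinatorial arguments do not apply to them as stated. The remedy is to intersect every event in the construction with $\edense(R,2\lfloor n_k^{1/2}\rfloor)$ for a suitable enlargement $R$ of $B_1\cup B_2$, which by Lemma \ref{3.3} has probability tending to $1$ as $k\to\infty$. On this event each crossing depends only on arrival times in a slight enlargement of its supporting rectangle, and the restricted $\sigma$-algebra inherits the reflection and translation symmetries and the FKG property of the full model, so the classical RSW construction goes through with additive error terms that vanish along the subsequence $n_k$, preserving the positive lower bound on $\limsup_n h_{\alpha\rho}$.
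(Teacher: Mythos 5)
Your overall architecture (monotonicity in $\rho$ for the easy direction, FKG plus lattice symmetries plus a localisation via $\edense$ to tame the long-range dependence) is exactly the right frame, and it is the frame the paper itself uses by citing the arguments of \cite{RSAPaper}, \cite{VoroPerc} and \cite{BBV}. However, there is a genuine gap at the heart of your ``doubling step''. The step you describe --- glue horizontal crossings of two translates $B_1,B_2$ of $R(2n_k,\rho)$ overlapping in a $2n_k\times 2n_k$ square $S$ via a vertical crossing of $S$ --- only increases the aspect ratio when $\rho>1$ strictly: the union $B_1\cup B_2$ has aspect ratio $2\rho-1$, which equals $\rho$ when $\rho=1$. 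But the hypothesis of the lemma may supply only some $\rho\leq 1$ (the applications in the paper use $\rho=1/3$ and $\rho=1$), and your Case~1 monotonicity only moves the aspect ratio \emph{down}. So the crucial content of the lemma --- passing from square crossings to crossings of rectangles with aspect ratio strictly greater than $1$ --- is never established. This is precisely the hard part of any RSW theorem, and for a dependent model like this one it cannot be done by conditioning on the lowest crossing as in the classical Bernoulli argument; it is the several-page construction of Theorem 4.1 of \cite{VoroPerc} (adapted in \cite{RSAPaper} and \cite{BBV}), involving the square-root trick applied to where a square crossing terminates on the far side, reflection of the terminal segment, and a careful comparison of scales.

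Relatedly, you invoke the square-root trick and reflection in the wrong place: the vertical black crossing of the overlap square $S$ needs no such argument, since by $90^\circ$ rotation invariance its probability equals $h_1(n_k,\lambda,p)\geq h_\rho(n_k,\lambda,p)$ once $\rho\geq1$. The square-root trick belongs in the missing square-to-longer-rectangle step. Finally, note that when that step is supplied, the fact that you only have a $\limsup$ (not a $\liminf$) along $n_k$ requires extra care whenever the construction compares crossing probabilities at different scales; this is the point of \cite[Section 4]{BBV} and should be addressed explicitly rather than absorbed into ``iterating the step''.
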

\begin{proof}
A weaker version of this result (with $\liminf$ rather than $\limsup$
in the hypothesis, and with $\rho =1$) is given by
the proof of Proposition 3.2 in \cite{RSAPaper}, based on 
 that of Theorem 4.1 of \cite{VoroPerc}.
The details on how to convert the proof to
 the stronger statement given here can
 be found in \cite[Section 4]{BBV}.
The argument uses the rapid decay of correlations (which follows 
 from Lemma \ref{affectlem}), the Harris-FKG inequality,
and the invariance of the model under 90 degree rotations
and under reflections in the $y$-axis.
\end{proof}

Note that by symmetry, $h_1(n,1,1/2)=1/2$, and therefore we have for all
 $\rho'>0$ that $\limsup_{n\to\infty} h_{\rho'} (n,1,1/2)>0$.
Using the method of proof of the recent result in \cite{Tassion}, it should be
possible to show that in fact $\liminf_{n \to \infty} h_{\rho'} (n,1,1/2)>0$,
but we do not need this. On the other hand, in applying Lemma
\ref{LimResBetter} we will need the case with $\rho=1/3$ as
well as the case with $\rho=1$. 
\begin{lemma}
\label{percCondition}
For eRSA with parameters $(\lambda,p)$, the even phase
 percolates if and only if $\lim_{n\to\infty}h_3(n,\lambda,p)=1$;
the odd phase percolates
 if and only if $\lim_{n\to\infty}h_{1/3}(n,\lambda,p)=0$.
\end{lemma}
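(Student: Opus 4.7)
My plan is to reduce both equivalences in the lemma to a single duality identity, Lemma~\ref{LemFiniteBox}, and a planar non-coexistence argument. The partition remark already recorded in the excerpt -- every rectangle has either a horizontal black crossing or a vertical white crossing, and the two events are disjoint -- combined with the $90^\circ$ rotational invariance of the eRSA measure, gives (after a rotation of coordinates) the identities
\[
h_3(n,\lambda,p)+h'_{1/3}(3n,\lambda,p)=1
\quad\text{and}\quad
h_{1/3}(3n,\lambda,p)+h'_3(n,\lambda,p)=1.
\]
Passing to the $n\to\infty$ limit, these let one replace $\lim h_3=1$ by $\lim h'_{1/3}=0$ and $\lim h_{1/3}=0$ by $\lim h'_3=1$, so the two halves of the lemma together amount to the single colour-symmetric assertion that the $X$-phase percolates if and only if $\lim_n h^{(X)}_3(n,\lambda,p)=1$, where $h^{(X)}$ is the horizontal crossing probability in colour $X\in\{\text{black},\text{white}\}$. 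Because Lemmas~\ref{lemHarris}, \ref{3.3}, \ref{LemFiniteBox} and \ref{LimResBetter} all apply symmetrically to either colour, it suffices to establish this for $X=\text{black}$.

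The easy direction, $\lim h_3=1\Rightarrow$ even percolation, is immediate: for $n$ large enough we have $h_3(n,\lambda,p)>1-\kappa$, so Lemma~\ref{LemFiniteBox} gives even percolation. For the converse I would argue the contrapositive: if $\lim_n h_3(n,\lambda,p)\ne 1$, the duality identity forces $\limsup_n h'_{1/3}(n,\lambda,p)>0$, and Lemma~\ref{LimResBetter} applied to white crossings propagates this to $\limsup_n h'_\rho(n,\lambda,p)>0$ for every $\rho>0$. Combining Harris--FKG for white-increasing events (Lemma~\ref{lemHarris}) with the approximate independence of events supported on well-separated spatial regions (supplied by the correlation decay in Lemma~\ref{3.3}), I would then glue white crossings of four overlapping long thin rectangles into a white circuit around the origin of probability bounded below, along a subsequence of widely separated scales. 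A Borel--Cantelli argument then produces infinitely many nested white circuits around the origin a.s., so the odd phase percolates. This contradicts even percolation via the standard planar non-coexistence principle for matching site lattices, itself a consequence of the ``horizontal black or vertical white'' partition identity applied to arbitrarily large rectangles.

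The main obstacle, as I anticipate, is the bootstrap from ``$\limsup h'_\rho>0$ for every $\rho$'' to genuine odd percolation: Lemma~\ref{LimResBetter} only delivers subsequences, and a priori these may depend on $\rho$. The resolution is to exploit the super-exponential decay in \eqref{eqaffects} when choosing a geometric sequence of scales sufficiently widely separated that crossings at distinct scales are essentially independent; once such scales are fixed, a standard RSW-style circuit construction followed by Borel--Cantelli completes the argument. Everything else is routine bookkeeping on top of Lemmas~\ref{LemFiniteBox}--\ref{LimResBetter}.
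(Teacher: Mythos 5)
Your overall route is the paper's: the easy direction via Lemma \ref{LemFiniteBox}; the converse by passing through the crossing-duality identity to white crossings, upgrading with Lemma \ref{LimResBetter}, and gluing four long white crossings into a circuit around the origin at each of a sparse sequence of scales, using Harris--FKG (Lemma \ref{lemHarris}) for the probability lower bound and the localisation events $\edense(\cdot,\cdot)$ of Lemma \ref{3.3} to make distinct scales genuinely independent so that Borel--Cantelli applies. Your reduction of the second equivalence to the colour-swapped first one via the identities $h_3(n)+h'_{1/3}(3n)=1$ and $h_{1/3}(3n)+h'_3(n)=1$ is a clean way to organise what the paper dismisses as ``proved similarly''.

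However, one step as written is false: infinitely many nested white circuits around the origin do \emph{not} imply that the odd phase percolates. Circuits at widely separated scales need not be joined to one another by white paths, and indeed at $(\lambda,p)=(1,1/2)$ your construction produces such circuits almost surely while, by parts (ii)--(iii) of Theorem \ref{thm:invCrit}, neither phase percolates there; so the implication cannot hold, and the subsequent appeal to planar non-coexistence rests on a false premise. The repair is immediate and is exactly what the paper does: a single white circuit around the origin already confines the even cluster of the origin to a finite set, so the almost sure occurrence of at least one of the independent circuit events gives that the origin's even cluster is a.s.\ finite, and translation invariance then rules out even percolation directly, with no detour through odd percolation or non-coexistence. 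With that correction (applied also in the colour-swapped half), your argument coincides with the paper's proof.
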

\begin{proof}
By Lemma \ref{LemFiniteBox}, it is immediate that
 $\lim_{n\to\infty}h_3(n,\lambda,p)=1$ implies percolation of the even phase.

Now suppose that $\liminf_{n\to\infty}h_3(n,\lambda,p)<1$; then
 $\limsup_{n\to\infty}h_{1/3}^\prime(n,\lambda,p)>0$. By
 Lemma \ref{LimResBetter}, hence
$\limsup_{n\to\infty}h_{3}^\prime(n,\lambda,p)>0.$ 
We can thus find a
 sequence $(n_i)_{i\in\mathbb{N}}$ with $n_{i+1}>4n_{i}$ such that
 $\liminf h_3^\prime(n_i, \lambda, p)>0.$
For $i \in \N$ define rectangles $R_{i,j}, 1 \leq j \leq 4$, by
 \begin{align*}
R_{i,1} &= [-3 n_i, 3 n_i] \times [-3 n_i, -n_i]\\
R_{i,2} &= [n_i, 3 n_i] \times [-3 n_i, 3 n_i]\\
R_{i,3} &= [-3 n_i, 3 n_i] \times [n_i, 3 n_i]\\
R_{i,4} &= [-3 n_i, n_i] \times [-3 n_i, - 3n_i].
\end{align*}
Let $E_i$ denote the event that all four rectangles $R_{i,j}$ contain a
 long way crossing in the odd phase and that 
 $ \cap_{j=1}^4 \edense(R_{i,j},n_i/8)$ holds. 
 By Lemma \ref{3.3}, 
$\BP_{\lambda, p}\left(E_{\text{dense}}(R_{i,j},n_i/8)\right)\to 1$
as $i \to \infty$,
 and hence by the Harris-FKG inequality,
 $\liminf \BP_{\lambda, p}(E_i) > 0$.
 Since the events $(E_i)_{i\in\mathbb{N}}$ are independent,
it follows that almost surely at least one of them occurs, and hence
the cluster containing the origin in the even phase is almost surely finite.

The last part is proved similarly.
%
\end{proof}

The next two propositions are key ingredients in the proof of Theorem 
\ref{thm:invCrit}; we defer their proof to Sections \ref{secKeys}
and \ref{seckey2}. The first of these  says that the effect
of a small change in $\lambda$ on box crossing probabilities
 is comparable to that of
a small change in $p$.

\begin{proposition}
\label{cor:halfCorrection} 
Let $\varepsilon \in (0,1/2)$.
 Then there is a constant $c_1=c_1(\varepsilon) \in (0,\infty)$
 such that for any
$(n,\lambda,p) \in \N \times [\varepsilon,1/ \varepsilon] \times
 [\eps,1-\eps] $
 we have
\begin{equation}
 c_1^{-1} \frac{\partial h_3(n,\lambda,p)}{\partial \lambda} \leq
 \frac{\partial h_3(n,\lambda,p)}{\partial p} \leq c_1
 \frac{\partial h_3(n,\lambda,p)}{\partial \lambda},
\label{eq:LambdaComp}
\end{equation}
and moreover the second inequality of 
(\ref{eq:LambdaComp}) holds for any
$(n,\lambda,p) \in \N \times [\varepsilon,1/ \varepsilon] \times
 [0,1] $.
\end{proposition}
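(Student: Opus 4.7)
The plan is to derive Margulis--Russo formulas expressing $\partial_p h_3$ and $\partial_\lambda h_3$ as sums of ``influences'', and then to match these two sums term by term using a finite-energy argument in the spirit of the essential enhancement technique of Aizenman--Grimmett~\cite{AG}. Since $(B_{x'})$ are i.i.d.\ Bernoulli$(p)$ at the diamond sites,
\begin{equation*}
\frac{\partial h_3}{\partial p} = \sum_{x' \text{ diamond}} \BP_{\lambda,p}\bigl[ x' \text{ is pivotal for } H_{n,3} \bigr].
\end{equation*}
For the rate $\lambda$ I would differentiate under the expectation using the marginal density $\lambda e^{-\lambda t}$ at each even site, exploiting the fact that $\1_{H_{n,3}}$ is non-increasing in $T_x$ for $x$ even (decreasing $T_x$ only adds to the even phase). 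Writing $\sigma_x := \sup\{s \geq 0 : \1_{H_{n,3}}(T_x \to s) = 1\}$ (with $\sup\emptyset = 0$) for the threshold determined by the other variables, an elementary exponential integration yields
\begin{equation*}
\frac{\partial h_3}{\partial \lambda} = \sum_{x \text{ even}} \BE_{\lambda,p}\bigl[ \sigma_x \, \1_{\{T_x > \sigma_x\}} \bigr].
\end{equation*}
The sum over $x$ is effectively finite, since by Lemma~\ref{affectlem} only sites in a bounded neighbourhood of $R(2n,3)$ contribute appreciably to $H_{n,3}$.

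The heart of the proof is then a finite-energy comparison matching diamond pivotals with even-site ``time pivotals''. For the upper bound $\partial_p h_3 \leq c_1 \partial_\lambda h_3$, I would pair each diamond $x'$ with a canonical even octagon neighbour $\hat x(x')$ (each diamond has two) so that each octagon $\hat x$ is paired with at most four diamonds, and then prove
\begin{equation*}
\BP_{\lambda,p}\bigl[ x' \text{ pivotal} \bigr] \leq C(\varepsilon) \, \BE_{\lambda,p}\bigl[ \sigma_{\hat x}\, \1_{\{T_{\hat x} > \sigma_{\hat x}\}} \bigr].
\end{equation*}
Fix a bounded-radius neighbourhood $N$ of $\{x',\hat x\}$ and condition on everything except $B_{x'}$ and $T_{\hat x}$. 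By Lemma~\ref{affectlem} the conditional probability that $N$ is insulated from the rest of the lattice is uniformly bounded below. On this insulation event, a short case analysis of the possible local RSA configurations shows that whenever $x'$ is pivotal there is an interval $I \subset (0, T_{\hat x})$ of length at least $c'(\varepsilon) > 0$ such that setting $T_{\hat x}$ to any value in $I$ produces a colour flip at $\hat x$ that changes $\1_{H_{n,3}}$ in the same direction as toggling $B_{x'}$. The density $\lambda e^{-\lambda t}$ is uniformly bounded below on any compact interval when $\lambda \in [\varepsilon, 1/\varepsilon]$, giving the bound with a constant depending only on $\varepsilon$; no lower bound on $p(1-p)$ is used here, which matches the claim that the second inequality of~(\ref{eq:LambdaComp}) extends to $p \in [0,1]$.

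For the lower bound $\partial_\lambda h_3 \leq c_1 \partial_p h_3$, the strategy runs in reverse: each even-octagon term is dominated by the sum of pivotal probabilities of the $O(1)$ diamonds neighbouring $x$. This direction does require $p \in [\varepsilon, 1-\varepsilon]$, because in exhibiting a pivotal-diamond event from a time-pivotal event one flips the colour of a nearby $B_{x'}$ and needs both outcomes to occur with conditional probability bounded below by $\varepsilon$. The main obstacle I anticipate is the case analysis underlying the finite-energy step: toggling $B_{x'}$ is a purely local flip of one tile, while altering $T_{\hat x}$ propagates through the RSA cascade and may recolour several octagons. Making the matching go through requires using Lemma~\ref{affectlem} to confine the cascade to $N$ with uniform conditional probability, and exploiting the geometric proximity of $\hat x$ to $x'$ (as one of its octagon neighbours) to verify that the net connectivity change produced by the time-pivot matches the one from the diamond-pivot. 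Uniformity of $c_1$ in $n$ is automatic because all estimates are local.
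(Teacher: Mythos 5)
Your overall architecture is the paper's: Margulis--Russo formulas for both partial derivatives followed by a local comparison of pivotal probabilities in the spirit of essential enhancements. Your formula $\partial h_3/\partial\lambda=\sum_{x\ \mathrm{even}}\BE[\sigma_x\1\{T_x>\sigma_x\}]=\sum_x\BE[\sigma_xe^{-\lambda\sigma_x}]$ is in fact equivalent to the paper's \eqref{eq:LambdaPartDeriv}, since the probability that $T_x<\sigma_x\le T_x+T$ with $T\sim\mathrm{Exp}(\lambda)$ independent equals $\lambda\sigma_xe^{-\lambda\sigma_x}$; and your observation that the diamond-to-octagon direction needs no bound on $p(1-p)$ while the reverse direction does is exactly right.

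However, the key comparison step --- the analogue of Lemma \ref{thm:diaPivToEvenPiv} --- has a genuine gap as you have set it up. You condition on everything except $B_{x'}$ and $T_{\hat x}$ and claim that on $\{x'\ \text{pivotal}\}$ there is an interval $I\subset(0,T_{\hat x})$ of length at least $c'(\eps)$ on which moving $T_{\hat x}$ reproduces the connectivity flip. This fails for two reasons. First, the window of values of $T_{\hat x}$ that flips the colour of $\hat x$ is governed by the gaps between the arrival times of the neighbours of $\hat x$, and these are conditioned on and can be arbitrarily small (indeed $(0,T_{\hat x})$ itself can be shorter than any fixed $c'$); no uniform lower bound on $|I|$ is available with all other variables frozen. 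Second, and more seriously, perturbing the single variable $T_{\hat x}$ cannot in general reproduce the effect of toggling $B_{x'}$: in the pivotal configuration of Figure \ref{fig:diaPiv} the two odd octagons adjacent to $x'$ are occupied (white) and sever the diagonal connection, so to connect $a$ to $b$ through the corner one must also recolour those odd sites, which requires altering \emph{their} arrival times relative to $\hat x$'s. The paper's proof resolves both issues by \emph{resampling} the arrival times in a whole constant-size box $B(13)+y$ with fresh independent randomness and demanding (events $E_2$, $E_3$) that the resampled times land in prescribed disjoint intervals --- an event of uniformly positive probability given $E_1$ --- so that delaying the arrival at $y$ by an independent $\mathrm{Exp}(\lambda)$ deterministically flips the whole block $y+B(3)$ while the annulus event glues the configuration back to the original outside $B(7)+y$. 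This resampling device (rather than conditioning and single-variable perturbation) is the missing idea; it also disposes of your ``insulation'' step, which as stated would only bound $\BP[x'\ \text{pivotal}\cap\text{insulated}]$ rather than the full pivotal probability. The reverse inequality, which you treat in one sentence, is harder still (one must build a pivotal diamond out of a time-pivotal octagon ``by hand'') and is the content of Proposition 3.1 of \cite{RSAPaper}.
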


The last key ingredient says that if (at some $(\lambda,p)$) we have
non-vanishing probability of crossing a large rectangle
of fixed aspect ratio, then after a slight increase of
either $\lambda $ or $p$ we have probability close to 1 of
crossing a rectangle of aspect ratio 3 the long way. 

\begin{proposition}
\label{critequal} 
Let $\lambda >0$, $ p \in (0,1)$ and $\eps >0$ 
with $p + \eps <1$.
  Suppose for some $\rho >0$ that
 $\limsup_{n \to \infty} h_{\rho} (n,\lambda,p) >0$.   
 Then 
 \begin{equation}
\limsup_{n\to\infty} h_3(n,\lambda +\eps,p)
=1
\label{0815b}
\end{equation}
and 
 \begin{equation}
\limsup_{n\to\infty} h_3(n,\lambda,p+\varepsilon)
=1.
\label{0815a}
\end{equation}
\end{proposition}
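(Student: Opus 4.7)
By Lemma \ref{LimResBetter}, the hypothesis upgrades to $\limsup_{n\to\infty} h_3(n,\lambda,p) > 0$; fix $c_0 \in (0,1/4)$ and $n_k \uparrow \infty$ with $h_3(n_k,\lambda,p) \geq 2c_0$. The task is to show $h_3(n_k, \lambda, p+\eps) \to 1$ and $h_3(n_k, \lambda+\eps, p) \to 1$ along the subsequence. The strategy has three ingredients: a sharp-threshold argument in a discretized model (via Section \ref{secthresh}), the essential-enhancement method to absorb the discretization error into a small parameter shift, and Proposition \ref{cor:halfCorrection} to interchange the roles of $p$- and $\lambda$-perturbations.

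To apply a sharp-threshold theorem we must work on a finite product space. Fix $n$ and let $\tilde R$ be the $2\lfloor n^{1/2}\rfloor$-enlargement of $R(2n,3)$; by Lemma \ref{3.3}, on the high-probability event $\edense(R(2n,3),2\lfloor n^{1/2}\rfloor)$, the status of $H_{n,3}$ is determined by the enhancement bits and arrival times $(T_x)_{x \in \tilde R}$. Replace each $T_x$ by the truncated-and-rounded surrogate $T_x^{\delta,M} := \delta\lceil T_x/\delta\rceil \wedge M$, yielding a finite-product-space event $\tilde H_n^{\delta,M}$. For fixed $n$ and $(\lambda,p)$, dominated convergence gives $\BP_{\lambda,p}(\tilde H_n^{\delta,M}) \to h_3(n,\lambda,p)$ as $\delta \downarrow 0$, $M \uparrow \infty$, and diagonalization along $k$ then produces sequences $\delta_{n_k} \downarrow 0$, $M_{n_k} \uparrow \infty$ with $\BP_{\lambda,p}(\tilde H_{n_k}^{\delta_{n_k},M_{n_k}}) \geq c_0$.

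The sharp-threshold result of Section \ref{secthresh} applied to $\tilde H_n^{\delta,M}$ (monotone in the enhancement bits, in a product space of $N_n$ coordinates with $N_n$ polynomial in $n M_n/\delta_n$) is expected to produce a differential inequality of the schematic form
\[\partial_p \BP_{\lambda,p}(\tilde H_n^{\delta,M}) \;\geq\; c \log(N_n)\cdot \BP_{\lambda,p}(\tilde H_n^{\delta,M})\bigl(1-\BP_{\lambda,p}(\tilde H_n^{\delta,M})\bigr),\]
whose integration implies the $p$-transition from level $c_0$ to level $1-c_0$ has width $O(1/\log N_{n_k}) \to 0$. Hence for $k$ large the $\eps$-shift in $p$ raises the discretized probability above $1-c_0/2$, and returning to $h_3$ via the diagonalization above yields (\ref{0815a}). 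For (\ref{0815b}), Proposition \ref{cor:halfCorrection}, ported to the discretized functional (its proof being a finite sum of local comparisons that survive the discretization), gives $\partial_\lambda \BP_{\lambda,p}(\tilde H_n^{\delta,M}) \geq c_1^{-1}\partial_p \BP_{\lambda,p}(\tilde H_n^{\delta,M})$; this converts the $p$-differential inequality into a $\lambda$-differential inequality with constant $c/c_1$, and integration gives the $\lambda$-perturbation conclusion.

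The principal obstacle is matching the discretization scale to the sharp-threshold scale. We need $N_{n_k} \to \infty$ so that the transition window $1/\log N_{n_k}$ fits inside the $\eps$-budget, yet the discretization error $h_3(n_k,\lambda,p') - \BP_{\lambda,p'}(\tilde H_{n_k}^{\delta_{n_k},M_{n_k}})$ must also remain small uniformly in $p' \in [p, p+\eps]$ for the sharp-threshold conclusion to transfer back to $h_3$. The essential-enhancement technique (cf.\ \cite{AG}) lets us bound the marginal effect of refining $\delta$ by a universal multiple of the marginal effect of increasing $p$, so the discretization-induced drop in the crossing probability can be compensated by an auxiliary $p$-shift of size $O(\delta_{n_k}) \to 0$, comfortably inside the $\eps$-budget. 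Carefully executing this compensation while simultaneously keeping $\log N_{n_k}$ growing (requiring that $\delta_{n_k}$ not shrink too fast) is the step where the bulk of the technical work lies.
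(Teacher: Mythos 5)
Your high-level architecture matches the paper's: discretize time, apply the sharp-threshold result of Section \ref{secthresh}, use the essential-enhancement comparison (Corollary \ref{cor:CompensateEqual}) to absorb the discretization error into a small parameter shift, and invoke Proposition \ref{cor:halfCorrection} to trade a $\lambda$-increment for a $p$-increment. However, there is a genuine gap at the point where you apply the sharp-threshold theorem. Proposition \ref{SharpNM} requires the event to have symmetry of order $m$ with $\gamma\log m$ large, and your event $\tilde H_n^{\delta,M}$ --- the crossing of a \emph{fixed} rectangle, read off from variables in a fixed enlargement $\tilde R$ --- has no such symmetry: no group action on the coordinates with large orbits preserves it, and coordinates near the ends of the rectangle have influences of a completely different order from generic ones. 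Without symmetry, the Friedgut--Kalai mechanism gives you nothing like the schematic inequality $\partial_p \BP \geq c\log(N_n)\,\BP(1-\BP)$; you would only control the total influence in terms of the \emph{maximal} influence, which cannot be bounded. The paper's resolution is an essential missing ingredient in your plan: transfer the model to the torus $\mathbb{T}(20n)$, replace the fixed-rectangle crossing by the translation-invariant event that \emph{some} $18n\times 2n$ rectangle in the torus is crossed (which has symmetry of order $200n^2$ under even translations), run the sharp-threshold comparison there, and only afterwards come back to a single rectangle via a covering argument, Harris--FKG, and the torus-to-plane approximation (Lemma \ref{TorApprox}).

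Two secondary points. First, Proposition \ref{SharpNM} requires the event to be increasing in \emph{every} coordinate of the product space, not merely in the enhancement bits; your rounding surrogate $\delta\lceil T_x/\delta\rceil\wedge M$ does not by itself produce a monotone structure. The paper achieves monotonicity by encoding each time-block as a $\{0,1,2,3\}$-valued variable with \emph{opposite} conventions at even and odd sites (an arrival is the top value at an even site and the bottom value at an odd site), and it handles the rounding uncertainty not by dominated convergence but by building a $2\delta$ delay into the target event $E_n$, so that every configuration consistent with the discrete data realizes the undelayed crossing ($E_n^{\mathrm{crude}}\subset F_n$); the cost of that delay is then paid by Corollary \ref{cor:CompensateEqual} and half of the $\eps$-budget. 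Second, your claim that the transition window is $O(1/\log N_n)$ overlooks that a unit shift in $\lambda$ moves each per-block probability by only $O(\delta)$, so the relevant quantity is $\gamma\log m \asymp \delta\log n$; this forces a specific choice such as $\delta=(\log n)^{-1/2}$, rather than the free diagonalization of $\delta_{n_k}$ you propose. You do flag that $\delta$ must not shrink too fast, but the argument as written does not close this loop.
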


We can now prove of Theorem
 \ref{thm:invCrit}, using the strategy outlined in Section \ref{secintro}.

\begin{proof}[Proof of Theorem \ref{thm:invCrit}]
Let $n_0 \in \N $ and $\kappa >0$ be as in Lemma \ref{LemFiniteBox}.
Let $S$ denote the set of $(\lambda,p)$ such that
$h_3(n,\lambda,p) > 1- \kappa$ for some $n \geq n_0$.
Since $h_3(n,\lambda,p)$ is continuous in $\lambda$ and $p$ for any fixed $n$,
the set $S$ is open in $(0,\infty) \times [0,1]$, 
and the even phase percolates for any $(\lambda,p) \in S$.
 Also, if $(\lambda, p) \notin S$ then
 $\limsup(h_3 (n, \lambda, p)) \leq 1-\kappa <1$, and thus there is no
 percolation by Lemma \ref{percCondition}.
Hence, $S$ is the set of $(\lambda,p)$ for which the
even phase percolates.

Similarly, with $S^\prime$ denoting the set of values
 $(\lambda, p)$ for which the odd phase percolates,
the set $S^\prime$ is also open in $(0,\infty) \times [0,1]$.
Also $S \cap S^\prime = \emptyset $ by Lemma
 \ref{percCondition}. Since 
 $\lambda^+_c(p) = \inf \{ \lambda : (\lambda,p) \in S\}$,
 and $\lambda^-_c(p) = \sup \{ \lambda : (\lambda,p) \in S'\}$,
this gives us the inequality
$\lambda_c^-(p) \leq \lambda_c^+(p) $.

For $(\lambda,p), (\lambda',p') \in (0,\infty) \times [0,1]$,
let us write 
 $(\lambda',p') \succ (\lambda,p)$ 
to mean that
$\lambda \leq \lambda'$ and $p \leq p'$ with at
least one of these inequalities being
strict.

Suppose
$(\lambda,p) \notin S'$ and $ 0 < p < 1$. Then
$
\limsup_{n\to\infty}
h_{1/3}(n,\lambda,p)>0
$
by Lemma \ref{percCondition}.
Hence by  Proposition \ref{critequal},
 for any $(\lambda',p') \succ (\lambda,p) $
 we have 
$\limsup_{n\to\infty} h_3(n,\lambda', p')=1$. Therefore
$(\lambda',p') \in S$. 
In other words, for $0 < p <1$ we have
\begin{equation}
(\lambda,p) \notin S' \Longrightarrow (\lambda',p') \in S
\quad \forall
(\lambda',p') \succ (\lambda,p). 
\label{0815c}
\end{equation}
%
%
%
Hence
 for $0 < p <1$  we have  $\lambda_c^-(p) = \lambda_c^+(p) $.
Thus we have part (i) of our theorem, and part
(ii)
 follows from the fact that 
the sets $S$ and $S'$ are open.

Since $h_1(n,1,1/2) =1/2$ for all $n$, we have
that 
$\limsup h_3(n,1,1/2) <1$, so 
 by Lemma \ref{percCondition}
we have $(1,1/2) \notin S$.
Hence  by duality, also
$(1,1/2) \notin S'$,
and part (iii) follows.

For part (iv), the strict monotonicity
of $\lambda_c(\cdot)$
follows from (\ref{0815c}) and
the fact that $S$ is open. 
We next prove the Lipschitz continuity of $\lambda_c(\cdot)$.

Let $\eps \in (0,1/2)$.
By Theorem 2.1 of \cite{RSAPaper},
$\lambda_c^+(0) < 10$, and hence 
by duality, $\lambda_c^-(1)>0.1$. 
By Proposition \ref{cor:halfCorrection}, 
we can find
$c_1 \in (1,\infty)$ such that for
 $(\lambda,p,n) \in  [0.1,11] \times [0,1] \times \N$,
 we have the second inequality of (\ref{eq:LambdaComp}).

 Let $p \in [\eps,1]$.  Then
 for any $\lambda \in ( \lambda^+_c(p),10)$, we have
$\lambda \in S$ so we can find
$n \geq n_0$ such that $h_3(n, \lambda,p) \geq 1-\kappa$.
Then by
the second inequality of
 (\ref{eq:LambdaComp}),  for 
 such $n$
and for
$0 < \delta < \eps/c_1$
 we have
$$
h_3(n,\lambda + c_1 \delta, p- \delta) \geq
h_3(n,\lambda,p) \geq 1- \kappa
$$  
so that $(\lambda + c_1 \delta,p-\delta) \in S$ and hence
$\lambda_c^+(p-\delta) \leq \lambda_c^+(p) + c_1 \delta $.
This gives the Lipschitz continuity of $\lambda_c^+(\cdot)$
on $[\eps,1]$.

Now suppose $0 \leq p \leq 1-\eps$.
By duality (Lemma \ref{lemdual}) we have
 $ \lambda_c^-(p) = 1/ \lambda_c^+(1-p).  $
Thus for  $0 < \delta < \min (\eps, (100 c_1)^{-1})$,
using that $\lambda_c^+(1-p) < 10$
 we also have 
$$
\lambda_c^+(1-p - \delta) \leq \lambda_c^+(1-p) + c_1 \delta
\leq \frac{ 1}{\lambda_c^-(p) - c' \delta},
$$
for  $c'= 100 c_1$.
  Hence by duality again, 
$ \lambda_c^-(p+\delta ) \geq  \lambda_c^-(p) -  c' \delta.  $ 
This shows the Lipschitz continuity of $\lambda_c^-(\cdot)$
on $[0,1-\eps]$.

For $\lambda_c^+(1) < \lambda < \lambda_c^-(0)$, 
set $p_c^+(\lambda) := \inf \{p: (\lambda,p) \in S \}$.
By (\ref{0815c}) and the fact that $S$ is open,
the function $p_c^+(\cdot)$ is strictly decreasing.
By a similar argument to the above
(now using the first inequality of (\ref{eq:LambdaComp})), 
we may show the 
 Lipschitz continuity of $p_c^+(\lambda)$ as a 
function of $\lambda$ for 
 $\lambda_c^+(1) +\eps \leq \lambda \leq \lambda_c^-(0) - \eps$.
Thus the restriction of the function $\lambda_c^+(\cdot)$ 
 to the domain  $[1-\eps,\eps]$
has a Lipschitz inverse,
namely $p^+_c(\cdot)$.
%
%
\end{proof}

So far, we have used only the generic properties mentioned
 in the proof of Lemma \ref{LimResBetter}, along with
duality.  For the 
enhancement estimates 
required below to prove Propositions \ref{cor:halfCorrection} 
and \ref{critequal},
we shall require arguments more specific
to this particular model.

\section{A sharp thresholds result}
\label{secthresh}

The {\em sharp threshold property} 
\cite{FK} 
for increasing events
in $\{0,1\}^n$ 
says that for any such event and any fixed $\eta \in (0,1/2)$,
when $n$ is large the threshold 
value of $p$ above which
 the probability of such an event
 (under product measure with parameter $p$) 
exceeds $1- \eta$, is only slightly larger
than the corresponding threshold 
for the event to have probability 
at least $\eta$.

In Proposition \ref{SharpNM}
below, we present a similar threshold result
for events in $\{0,1,\ldots,k\}^n$ for any fixed $k$,
satisfying a symmetry assumption.
Such a result was given in 
 \cite{JohnMehlErrata} for the case $k=2$; we adapt
this to general $k$ and give a more detailed proof than that of
\cite{JohnMehlErrata}. Later, we shall use 
 Proposition \ref{SharpNM} to prove Proposition \ref{critequal}.

Let $k \in \mathbb{N}$. For
 $n, m\in\mathbb{N}$, a 
subset $E \subset \{0,1,...,k\}^n$ is said to have
 symmetry of order $m$ if there is a group action
 on $[n] :=\{1,2,...,n\}$ in which each orbit has size at
 least $m$, such that the induced action on $\{0,1,...,k\}^n$ preserves
 $E$; for instance, if $n$ is even then 
 a subset $E\subset\{0,1,...,k\}^{n^2}$ 
 which is preserved by even translations of the $n $ by $ n$
 torus $[n] \times [n]$ (identified with $[n^2]$)
 would have symmetry of 
order $n^2/2$.

Given 
a probability vector 
 $\bp =(p_0,p_1,...,p_k)$  
(i.e., a finite vector with nonnegative entries summing to 1),
we write $\BP_{\bp}$ for the probability measure
on $\{0,\ldots,k\}$ with probability mass function
$\bp$, and for $n \in \N$ 
we write
 $\BP_{\bp}^n$ for the $n$-fold product  of this probability measure
(a probability measure on $\{0,\ldots,k\}^n$).
We say that $E \subset \{0,1,\ldots,k\}^n$
is {\em increasing}, if for every $x= (x_1,\ldots,x_k)$ and 
$y=(y_1,\ldots,y_k)$ in  $\{0,1,\ldots,k\}^n$ such
that $x \in E$ and $y_i \geq x_i$ for $i\in \{0,1,\ldots,n\}$,
we have $y \in  E$.

Given probability vectors $\bp = (p_0,\ldots,p_k)$ and
$\bq = (q_0,\ldots,q_{k})$, we say that $\bq$ {\em dominates} $\bp$ if
for $j=0,1,2,\ldots,k-1$ we have $\sum_{i=0}^j (p_i -q_i) \geq 0$.
Note that $\bq$ dominates $\bp$, if and only if
 there are coupled random variables
$X,Y$ taking values in $\{0,1,\ldots,k\}$ such
that $X$ has distribution $\mathbb{P}_{\bp}$ and
 $Y$ has distribution 
 $\mathbb{P}_{\bq}$ and $Y \geq X$ almost surely.
Thus, if $\bq$ dominates $\bp$ then
 $\mathbb{P}_{\bq}^n(E) \geq 
 \mathbb{P}_{\bp}^n(E) $ for any $n \in \N$ and any increasing
$E \subset \{0,1,\ldots,k\}^n$.

\begin{proposition}\label{SharpNM}
Let $k,n, m \in\mathbb{N}$,
 let $\eta \in (0,1/2)$ and let $\gamma >0$.
Suppose $\bp = (p_0,p_1,\ldots,p_k)$ and $\bq = (q_0,q_1,\ldots,q_k)$ are
probability vectors such that $p_0 \geq \gamma$,
$p_k \leq 1-\gamma$ and $\bq$  
dominates $\bp + (-\gamma,0,\ldots,0,\gamma)$.
Let $\qmax$ denote the second largest of
the numbers $p_0,\ldots,p_{k-1},p_k+ \gamma$, and 
suppose also that
\begin{equation}
\label{SharpBound}
\gamma \log m
 \geq
 200k^2 
\log (1/\eta)
\qmax
\log(4/\qmax).
\end{equation}
Then for any increasing 
$E \subset \{0,1,\ldots,k\}^n$ with symmetry of order $m$,
and with $\mathbb{P}_{\bp}^n(E)>\eta$, we have
$\mathbb{P}_{\bq}^n(E)>1-\eta$.
\end{proposition}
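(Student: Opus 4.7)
The plan is to adapt the Russo--Friedgut--Kalai sharp-threshold argument to the multi-state product setting. First, by stochastic domination for increasing events, it suffices to prove the statement with $\bq$ replaced by $\bp_\gamma := \bp + \gamma(-1,0,\ldots,0,1)$. I parametrise $\bp_t := \bp + t(-1,0,\ldots,0,1)$ for $t\in[0,\gamma]$ and set $g(t) := \BP_{\bp_t}^n(E)$, so that $g(0) > \eta$; I aim to show $g(\gamma) > 1 - \eta$ via a differential inequality. A direct Russo-type computation, using that only the mass at values $0$ and $k$ varies with $t$, gives
\[
g'(t) = \sum_{j=1}^n \pi_j(t),
\]
where $\pi_j(t)$ is the $\BP_{\bp_t}^{n-1}$-probability of the set of $\omega^{(-j)} \in \{0,\ldots,k\}^{n-1}$ for which $(\omega^{(-j)},k)\in E$ but $(\omega^{(-j)},0)\notin E$---the ``$0\leftrightarrow k$ pivotal probability'' of coordinate $j$.

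The main step is to lower bound $\sum_j \pi_j(t)$ via a Kahn--Kalai--Linial / Bourgain--Kahn--Kalai--Katznelson--Linial influence inequality applied to the multi-state product measure $\BP_{\bp_t}^n$. The symmetry hypothesis makes $\pi_j(t)$ constant on each orbit of size $\geq m$, so a generic KKL ``max-influence'' lower bound upgrades to a ``sum-over-an-orbit'' lower bound, exchanging the classical $\log n$ factor for $\log m$. The ``worst marginal'' term $\qmax \log(4/\qmax)$ plays the role of $p(1-p)\log(1/p(1-p))$ in Boolean KKL, while a factor $k^2$ accounts for the loss in translating the generic notion of influence into the specific $0 \leftrightarrow k$ pivotality in the Russo formula. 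Combining these, I expect
\[
g'(t) \;\geq\; \frac{c\, g(t)(1 - g(t)) \log m}{k^2\, \qmax \log(4/\qmax)}
\]
for $g(t) \in (\eta, 1-\eta)$ and some absolute $c>0$. Integrating over $[0,\gamma]$ using $\int_{\eta}^{1-\eta} dg/(g(1-g)) \leq 2 \log(1/\eta)$ then gives $g(\gamma) > 1 - \eta$ whenever \eqref{SharpBound} holds, the constant $200$ absorbing absolute constants from the KKL step.

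The main obstacle is to formulate the appropriate multi-state, non-uniform KKL-type inequality with the required $k^2$ dependence. One natural route---reducing to Boolean by conditioning on $S := \{j: \omega_j \in \{0,k\}\}$ and on the values $(\omega_j)_{j \notin S}$, so that the conditional law of $(\mathbf{1}\{\omega_j = k\})_{j\in S}$ is an i.i.d.\ Bernoulli process with parameter $r_t := (p_k + t)/(p_0 + p_k)$---destroys the group symmetry, since the random set $S$ is not preserved by the action. A cleaner alternative is to apply BKKKL directly to the multi-state product, preserving the full symmetry but requiring a careful comparison between its generic notion of influence and the $0 \leftrightarrow k$ pivotality $\pi_j(t)$ in the Russo formula. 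I would expect to follow the approach of \cite{JohnMehlErrata}, which handled the case $k=2$.
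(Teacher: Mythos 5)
Your strategy coincides with the paper's: reduce to $\bq = \bp + \gamma(-1,0,\ldots,0,1)$ by stochastic domination, differentiate along the path $\br(h)=\bp+(-h,0,\ldots,0,h)$ via a Margulis--Russo formula (for increasing $E$ your $0\leftrightarrow k$ pivotality agrees with the paper's notion of influence, since pivotality of coordinate $j$ depends only on $\omega^{(-j)}$ and, by monotonicity, is witnessed by the extreme change $0\to k$), lower-bound the total influence by a symmetrised BKKKL-type inequality on the multi-state product space, and integrate the resulting differential inequality for $\log(g/(1-g))$. You also correctly diagnose that conditioning on $\{j:\omega_j\in\{0,k\}\}$ destroys the group symmetry, and that one must instead apply a BKKKL-type bound directly to the multi-state measure, which is indeed the route of \cite{JohnMehlErrata} and of the paper.

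The step you leave open is, however, exactly where the work lies: the quantitative multi-state influence inequality (Lemma \ref{leminfl}), asserting that if every influence is at most $a\,q^2(\log(4/q))^2$ with $q\geq\pmax(\bp)$ and $a\leq 1/16$, then the total influence is at least $t(1-t)\log(1/a)/(24k^2 q\log(4/q))$. The paper proves this in the Appendix by encoding $\{0,\ldots,k\}$ under $\BP_{\bp}$ (with dyadic entries, by a continuity argument) into the uniform cube $\{0,1\}^m$ via binary expansions; the key estimate (Lemma \ref{KeyBoundUnlim}) is that the total binary influence of any $f:\{0,\ldots,k\}\to\{0,1\}$ under this encoding is at most $3k^2\pmax(\bp)\log(4/\pmax(\bp))$ --- this is the precise source of both the $k^2$ factor and the $\qmax\log(4/\qmax)$ factor you anticipated --- after which the Bonami--Beckner hypercontractive argument of \cite{BKKKL} on $\{0,1\}^{mn}$ goes through. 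The paper explicitly notes that the corresponding argument in \cite{JohnMehlErrata} is too sketchy to be read on its own, so an appeal to that reference would not by itself close the gap. Finally, the symmetry hypothesis enters through a dichotomy: either some influence exceeds $m^{-1/2}$, whence at least $m$ of them do and the sum is already enormous, or all influences are below $m^{-1/2}$ and Lemma \ref{leminfl} applies with $a\leq m^{-1/4}$, converting $\log(1/a)$ into a multiple of $\log m$; this is the mechanism behind the exchange of $\log n$ for $\log m$ that you describe.
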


The remainder of this section is devoted to proving this, via a series 
of lemmas.

Given a probability vector
 $\bp =(p_0,p_1,...,p_k)$, 
define
\bean
\label{eq:BetaDef} 
\betap(x) & := & \max \left\{ j \in \{0,\ldots,k\}: \sum_{i=0}^{j-1} p_i \leq x 
\right\}, ~~~~~ x \in [0,1). 
\eean
where by definition we set $\sum_{i=0}^{-1} p_i = 0$.
Define $\pmax(\bp)$ to be the {\em second largest} of
the numbers $p_0,p_1,\ldots,p_k$.
Given $\ell \in \N$,
let $h_\ell:[0,1) \to [0,1)$ be the function which inverts the $\ell$th digit
 of the binary expansion of a number (using the
 terminating expansion wherever there is a choice). Now we let
 $U$ be a uniform $(0,1)$ distributed random variable, and
for $f:\{0,1,\ldots,k \} \to \{0,1\}$ define  
\begin{align*}
w_{\ell,\bp}(f) &=
 \mathbb{P}\left[f \circ \betap(U) \neq f \circ \betap
 (h_\ell(U))\right], ~~~~ \ell \in \N;
\\
w_{\bp}(f)&=\sum_{\ell=1}^\infty 
w_{\ell,\bp}(f).
\end{align*}


\begin{lemma}\label{KeyBoundUnlim} 
Let $k \in \mathbb{N}$. 
Then for any probability vector $\bp=(p_0,p_1,\ldots,p_k)$
 with $p_i>0$ for all $i$, and any
 function $f:\{0,1,\ldots,k\}\rightarrow\{0,1\}$, we have
\begin{equation}
w_{\bp}(f)\leq 3 k^2 \pmax(\bp) \log(4/\pmax(\bp)).
\label{eqomegabound} 
\end{equation}
\end{lemma}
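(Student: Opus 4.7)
The plan is to bound each $w_{\ell,\bp}(f)$ directly by splitting according to the pair of values taken by $\betap(U)$ and $\betap(h_\ell(U))$, and then to sum over $\ell$. Concretely, I would write
\[
w_{\ell,\bp}(f) \;=\; \sum_{\substack{i\neq j \\ f(i)\neq f(j)}}\mathbb{P}\bigl[\betap(U)=i,\,\betap(h_\ell(U))=j\bigr],
\]
and then, for each such ordered pair, establish the pointwise bound $\mathbb{P}[\betap(U)=i,\,\betap(h_\ell(U))=j] \leq \min(p_i,p_j,2^{-\ell})$.

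The main geometric step driving this bound is the following. Recall that $\betap$ takes value $i$ exactly on the interval $I_i:=[c_i,c_{i+1})$ of length $p_i$, where $c_r:=\sum_{m<r}p_m$. Since $h_\ell(U)-U\in\{\pm 2^{-\ell}\}$ depending on the $\ell$th binary digit of $U$, the event $\{\betap(U)=i,\,\betap(h_\ell(U))=j\}$ for $i<j$ forces $U_\ell=0$ and $U\in I_i\cap(I_j-2^{-\ell})$. Disjointness of $I_i$ and $I_j$ (so that $c_{i+1}\leq c_j$) then gives $|I_i\cap(I_j-2^{-\ell})|\leq c_{i+1}-(c_j-2^{-\ell})\leq 2^{-\ell}$, and of course the intersection length is also bounded by $p_i$ and $p_j$. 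The case $i>j$ is symmetric.

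Summing over $\ell$ via the elementary estimate $\sum_{\ell\geq 1}\min(m,2^{-\ell})\leq m\log_2(4/m)$ (split at $\ell=\lfloor\log_2(1/m)\rfloor$, with $m:=\min(p_i,p_j)$) yields a per-pair bound of $m\log_2(4/m)$. The crucial structural observation now enters: for any two distinct indices $i\neq j$, at most one of $p_i,p_j$ can equal $\max_r p_r$, so $\min(p_i,p_j)\leq\pmax(\bp)$; by monotonicity of $m\mapsto m\log(4/m)$ on $(0,1]$, each ordered pair contributes at most $\pmax\log_2(4/\pmax)$ to $w_{\bp}(f)$.

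Finally, the number of ordered pairs $(i,j)$ with $i\neq j$ and $f(i)\neq f(j)$ equals $2|f^{-1}(0)|\,|f^{-1}(1)|\leq (k+1)^2/2$ by AM--GM. Absorbing the base-change constant (checking, e.g., $(k+1)^2/(2\log 2)\leq 3k^2$ for all $k\geq 1$) then delivers~(\ref{eqomegabound}). The main technical hurdle I anticipate is securing the geometric bound $|I_i\cap(I_j-2^{-\ell})|\leq 2^{-\ell}$, which rests on disjointness of $I_i,I_j$ and is what causes the $\ell$-sum to converge on the scale $\pmax\log(1/\pmax)$; once that is in place, the rest of the argument is elementary summation and counting.
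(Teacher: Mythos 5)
Your argument is correct, and the result follows, but the route is genuinely different from the paper's. The paper bounds $w_{\ell,\bp}(f)$ by (twice) the probability that $U$ and $h_\ell(U)$ straddle one of the cut points $q_j=\sum_{i\le j}p_i$, sums over $\ell$ to obtain $\sum_j 2q_j^*\log_2(4/q_j^*)$, and then needs the two-sided estimates (\ref{wIneq1dLow})--(\ref{wIneq1dHigh}) together with a split at the argmax index $s$ of $\bp$ in order to convert each $q_j^*\log_2(4/q_j^*)$ into a sum of at most $k$ terms $p_\ell\log_2(4/p_\ell)$ that avoids the largest entry; this is how the \emph{second} largest entry $\pmax(\bp)$ enters there. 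You instead decompose $w_{\ell,\bp}(f)$ over ordered pairs $(\betap(U),\betap(h_\ell(U)))=(i,j)$ with $f(i)\neq f(j)$, prove the per-pair, per-$\ell$ bound $\min(p_i,p_j,2^{-\ell})$ from the interval geometry (the event forces a definite value of the $\ell$th digit and confines $U$ to $I_i\cap(I_j\mp 2^{-\ell})$, whose length is at most $2^{-\ell}$ by disjointness of the cells), and obtain $\pmax$ immediately from the observation that for $i\neq j$ at most one of $p_i,p_j$ can be the largest entry. The two proofs share the $\ell$-summation $\sum_{\ell\ge 1}\min(m,2^{-\ell})\le m\log_2(4/m)$ and the monotonicity of $m\mapsto m\log_2(4/m)$ on $(0,1]$. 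Your route bypasses the partial sums $q_j$ entirely, actually uses $f$ (only pairs with $f(i)\neq f(j)$ contribute, giving the count $2|f^{-1}(0)||f^{-1}(1)|\le(k+1)^2/2$), and yields the marginally better pre-constant $(k+1)^2/2$ in place of the paper's $k(k+1)$ before the base change; both fit under $3k^2$ after dividing by $\log 2$, as you check.
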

\begin{proof}

For $0 \leq j \leq k-1$  define
 $q_j :=\sum_{i = 0}^j p_i$ and
 $q_j^* :=\min(q_j,1-q_j)$.
Then for $\ell \in \N$,
$$
w_{\ell,\bp}(f) \leq 
2 \sum_{j=0}^{k-1} \mathbb{P}(U<q_j <h_\ell(U))
\leq 2 \sum_{j=0}^{k-1} \min(q_j^*,2^{-\ell}).
$$
Hence,
\begin{align*}
w_{\bp}(f) 
%
&\leq \sum_{j=0}^{k-1}
\left( 
 \sum_{\ell=1}^{\left\lfloor\log_2(1/q_j^*)\right\rfloor}2 q_j^* +
 \sum_{\ell=\left\lceil\log_2(1/q_j^*)\right\rceil}^\infty 2^{1-\ell} 
\right)
\\
&\leq \sum_{j=0}^{k-1} \left(2 q_j^* \log_2(1/q_j^* ) + 4 q_j^* \right) 
= \sum_{j=0}^{k-1} 2 q_j^* \log_2(4/q_j^* ).  
\end{align*}
%
%
By routine calculus  
$  p \log_2 (4/ p)$ is increasing in $p $ for $p \in (0,1)$.
Hence for all $j$ in the sum,
\bea
q_j^* \log_2(4/q_j^*) \leq
 q_j \log_2 (4/q_j) \leq \sum_{\ell=0}^j p_{\ell} \log_2 (4/p_\ell),
\label{wIneq1dLow}
\eea
and also
\bea
q_j^* \log_2(4/q_j^*) \leq
 (1 - q_j) \log_2 (4/(1 - q_j)) \leq \sum_{\ell=j+1}^k p_{\ell}
 \log_2 (4/p_\ell).
\label{wIneq1dHigh}
\eea
Choose $s \in \{0,1,\ldots,k\}$ such that 
$p_s = \max(p_0, p_1,\ldots,p_k)$.
Using \eqref{wIneq1dLow} for $j < s$
and  \eqref{wIneq1dHigh} for $j \geq s$, we obtain
\begin{eqnarray*}
w_{\bp}(f) & \leq &  
 k (k+1) \pmax (\bp)  \log_2 (4/\pmax(\bp)) . 
\end{eqnarray*}
Since $k(k+1) \leq 2 k^2$ and 
$ \log 2 > 2/3$,
the result (\ref{eqomegabound}) follows.
\end{proof}

Given $k,n \in \N$, given
$f:\{0,1,\ldots,k\}^n\rightarrow\{0,1\}$ and
$j \in \{0,\ldots,k\}$, and given $x =(x_1,\ldots,x_n)  \in 
\{0,1,\ldots,k\}^n, $ we say the $j$th coordinate of $x$
is {\em pivotal} for $f$ if there exists $y
=(y_1,\ldots,y_n)  \in \{0,1,\ldots,k\}^n, $ with $y_i = x_i $ for all $i \neq j$,
such that $f(x) \neq f(y)$. 
Given also a probability vector $\bp=(p_0,\ldots,p_k)$ we
define the {\em influence} $I_{f,\bp}(j)$ of the
$j$th coordinate on $f$ as the probability
that the $j$th coordinate of $X$ is pivotal for $f$,
where here $X$ is a random element of 
$ \{0,1,\ldots,k\}^n $ with distribution $\BP^n_{\bp}$.

\begin{lemma}
\label{leminfl}
Let $k,n \in \mathbb{N}$. 
For any probability vector $\bp=(p_0,p_1,\ldots,p_k)$
 with all $p_i>0$,
any function 
$f:\{0,1,\ldots,k\}^n\rightarrow\{0,1\}$, any
$q \in [ \pmax(\bp) ,1]$ and any $a \in (0,1/16]$, if
\begin{equation}
I_{f,\bp}(j)\leq a
 q^2(\log(4/q))^2,
~~~~~ \forall
j \in [n], 
\label{0721a}
\end{equation} 
then 
setting $t=\mathbb{P}^n_{\bp}(f^{-1}(1))
= \BE  f(X) $,
we have that 
\begin{equation}
\sum_{j=1}^n I_{f,\bp}(j)\geq 
 \frac{t(1-t)\log(1/a) }{24 k^2 q \log(4/q)}.
\label{0721b}
\end{equation}
\end{lemma}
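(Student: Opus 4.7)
The plan is to reduce to Talagrand's classical influence inequality on the Boolean cube, using the binary encoding $\beta_\bp$ from the set-up of Lemma \ref{KeyBoundUnlim}. Let $(b_{\ell,j})_{\ell\in\N,\, j \in [n]}$ be iid Bernoulli$(1/2)$ bits, assembled into $U_j := \sum_{\ell \geq 1} 2^{-\ell} b_{\ell,j}$, so each $U_j$ is uniform on $[0,1)$ and the tuple $(\beta_\bp(U_1),\ldots,\beta_\bp(U_n))$ has law $\BP_\bp^n$. Setting $\tilde f(b) := f(\beta_\bp(U_1),\ldots,\beta_\bp(U_n))$ gives a $\{0,1\}$-valued function on $\{0,1\}^{\N\times[n]}$ with mean $t$; after truncating to $\ell \leq L$ one works on a genuine finite product space, and a limiting argument handles the truncation at the end.

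The bridge between the influences of $f$ and those of $\tilde f$ comes from slicing. For each $j$, the conditional slice $g_{X_{-j}}(v) := f(v,X_{-j}) : \{0,\ldots,k\} \to \{0,1\}$ is nonconstant precisely when the $j$th coordinate is pivotal, and Lemma \ref{KeyBoundUnlim} applied to $g_{X_{-j}}$ gives $w_\bp(g_{X_{-j}}) \leq 3k^2 q \log(4/q)$. Writing $J_{\tilde f}(\ell,j)$ for the influence of the bit $b_{\ell,j}$, the total influence of column $j$ on $\tilde f$ equals $\BE[w_\bp(g_{X_{-j}})]$, so
\[
 \sum_\ell J_{\tilde f}(\ell,j) \;\leq\; 3 k^2 q \log(4/q) \cdot I_{f,\bp}(j).
\]
The proof of Lemma \ref{KeyBoundUnlim} also yields the per-bit refinement $J_{\tilde f}(\ell,j) \leq 2\alpha_\ell I_{f,\bp}(j)$ with $\alpha_\ell = \sum_i \min(q_i^\ast,2^{-\ell})$ and $\sum_\ell \alpha_\ell \leq 2 k^2 q \log_2(4/q)$.

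Now apply Talagrand's inequality to the truncation of $\tilde f$ in the form $\mathrm{Var}(\tilde f) \leq C \sum_{(\ell,j)} J_{\tilde f}(\ell,j)/\log(1/J_{\tilde f}(\ell,j))$. Using the per-bit bound and monotonicity of $x\mapsto x/\log(1/x)$ on $(0,1/e)$, each $\ell$ with $\alpha_\ell \leq 1/2$ contributes at most $2\alpha_\ell I_{f,\bp}(j)/\log(1/I_{f,\bp}(j))$; the $O(\log k)$ small-$\ell$ terms with $\alpha_\ell > 1/2$ can be absorbed into the constant. Summing, and using $I_{f,\bp}(j) \leq aq^2\log^2(4/q) \leq 2a$ (since $q^2\log^2(4/q) \leq (\log 4)^2 < 2$ on $(0,1]$) together with $a \leq 1/16$ to give $\log(1/I_{f,\bp}(j)) \geq \tfrac12 \log(1/a)$, one obtains
\[
 t(1-t) \;\leq\; \frac{C'\, k^2 q \log(4/q)}{\log(1/a)} \sum_j I_{f,\bp}(j),
\]
and rearranging with the correct absolute constant yields \eqref{0721b}.

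The main obstacle is the logarithmic bookkeeping: Talagrand's inequality naturally produces $\log(1/J_{\tilde f})$ in the denominator, and one needs to replace this by $\log(1/a)$ up to a universal factor. The hypothesis $a \leq 1/16$ is precisely calibrated to absorb the $\log 2$ slack from $I_{f,\bp}(j) \leq 2a$, while the explicit constant $24$ in the conclusion requires tracking the $3k^2$ from Lemma \ref{KeyBoundUnlim}, Talagrand's absolute constant, and the factor $2$ from $\log(1/I_{f,\bp}(j)) \geq \tfrac12 \log(1/a)$.
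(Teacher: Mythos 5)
Your route is genuinely different from the paper's. The paper does not cite Talagrand's inequality; it re-derives a Talagrand/BKKKL-type influence bound from scratch in the Appendix, via the same dyadic encoding into the Boolean cube, Walsh--Fourier analysis, the Bonami--Beckner inequality, and a split of the Fourier spectrum into a high-degree part and a middle band. Both arguments use Lemma \ref{KeyBoundUnlim} in the identical way as the bridge (the total influence of the bits encoding coordinate $j$ is at most $3k^2 q\log(4/q)\, I_{f,\bp}(j)$, since the slice function is constant off the pivotal event). What the paper's longer argument buys, as it says explicitly, is self-containedness and the explicit constant $24$; your black-box appeal to Talagrand buys brevity but only yields \eqref{0721b} with $24$ replaced by some unspecified universal constant, which you do not actually pin down. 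For the downstream application (Proposition \ref{SharpNM} and Lemma \ref{Lem0615}) the exact constant is immaterial, but as a proof of the lemma as stated this is a real deficit unless you invoke a version of Talagrand's inequality with an explicit, sufficiently small constant.

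There is also one step that fails as written: absorbing the $O(\log k)$ indices $\ell$ with $\alpha_\ell>1/2$ "into the constant". Those terms contribute up to $\log_2(4k)\cdot I_{f,\bp}(j)/\log(1/I_{f,\bp}(j))$, and to hide them inside $C'k^2 q\log(4/q)\,I_{f,\bp}(j)/\log(1/I_{f,\bp}(j))$ you need $\log k \lesssim k^2 q\log(4/q)$, which is false when $\pmax(\bp)$ is very small (e.g.\ $p_0=1-\delta$ and $p_i=\delta/k$ otherwise, with $q=\pmax(\bp)=\delta/k$ and $\delta\to0$). Fortunately the whole case split and the per-bit refinement are unnecessary: since $w_{\ell,\bp}(g)\le 1$ always and $w_{\ell,\bp}(g)=0$ for constant $g$, you have $J_{\tilde f}(\ell,j)\le I_{f,\bp}(j)\le 2a\le 1/8$ for \emph{every} $\ell$, hence $\log(1/J_{\tilde f}(\ell,j))\ge\log(1/I_{f,\bp}(j))$, and you may simply bound $\sum_\ell J_{\tilde f}(\ell,j)/\log(1/J_{\tilde f}(\ell,j))\le \bigl(\sum_\ell J_{\tilde f}(\ell,j)\bigr)/\log(1/I_{f,\bp}(j))\le 3k^2q\log(4/q)\,I_{f,\bp}(j)/\log(1/I_{f,\bp}(j))$, after which your final rearrangement goes through. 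The truncation to finitely many bits is handled more cleanly as in the paper, by first reducing to dyadic-rational $\bp$ via continuity, so that $\beta_\bp(U)$ genuinely depends on only finitely many bits.
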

Given Lemma \ref{KeyBoundUnlim}, 
the proof of Lemma \ref{leminfl} is similar to that of 
Lemma 2 of \cite{JohnMehlErrata}. However, the argument given
there (even in the ArXiv version, which has more detail than
the published version) is quite sketchy, and 
`not intended to be read on its own';
it relies on arguments from Theorems 3.1 and 3.4 of \cite{FK},
 and both of these papers 
 rely heavily on arguments from \cite{BKKKL},
which is itself rather concise. 
Moreover, none of these
papers is entirely free of minor errors, which
does not aid readability. Therefore 
to make this presentation more self-contained,
and also to give explicit constants in the bounds,
we think it worthwhile to give a detailed proof.
However, we defer it to the Appendix.

We now give the proof of Proposition \ref{SharpNM}, which is
adapted from that of Lemma 1 in \cite{JohnMehlErrata}.

\begin{proof}[Proof of Proposition \ref{SharpNM}]
  Note that $ \gamma \leq \min(p_0,p_k+\gamma) \leq \qmax$.
Therefore
 by the assumption (\ref{SharpBound}), $\log m \geq 200 (\log 2)
 \log ( 4/ \qmax)$. 
Hence $m \geq \qmax^{-9}$, and also  $m \geq 16^4$.

For $0 \leq h \leq \gamma$ set $\br(h) = \bp + (-h,0,\ldots,0,h)$.
Let $g(h) = \mathbb{P}_{\br(h)}^n(E)$.
By assumption, $\bq$ dominates $\br(\gamma)$. 
Therefore 
 $\mathbb{P}^n_{\bq}(E) \geq
 \mathbb{P}^n_{\br(\gamma)}(E) = g(\gamma)$.
We shall use a form of the Margulis-Russo formula, namely
\bea
g'(h) =  I_{f,\br(h)} : = \sum_{j=1}^n I_{f,\br(h)} (j), 
\quad \quad \forall h \in (0,\gamma),
\label{0806b}
\eea
where $f$ is the indicator of event $E$ and
 $I_{f,\bp}(j)$ is the influence of the
$j$th coordinate on the function $f$, as in Lemma \ref{leminfl}.
To see (\ref{0806b}),
for $h_1,\ldots,h_n \in (0,\gamma)$ let $u(h_1,\ldots,h_n)$ denote
the probability of event $E$ under the 
 measure $\prod_{i=1}^n \mathbb{P}_{\br(h_i)}$
 and for probability vectors $\bp_1,\ldots,\bp_n$ on $\{0,1\ldots,k\}$
and $j \in [n]$ let 
$I_{f,(\bp_1,\ldots,\bp_n)} (j)$ denote the probability that the $j$th 
coordinate of $X$ is pivotal for $f$, where
$X$ is a random element of $\{0,1\ldots,k\}^n$ with
distribution $\prod_{i=1}^n \bp_i$.
Then for
$j \in [n]$ and $\eps >0$ with $h_j + \eps < \gamma$, we
can find coupled $\{0,1,\ldots,k\}^n$-valued
 random vectors $X $ and $X'$ 
with respective distributions
 $\prod_{i=1}^n \mathbb{P}_{\br(h_i)}$
and 
 $\prod_{i=1}^n \mathbb{P}_{\br'_i}$, where
we set $\br'_i= \br(h_i)$ except for $i=j$, and
$\br'_j= \br(h_j + \eps)$, and such that
$\mathbb{P} [X= X'] =1-\eps $ and
if $X \neq X'$ then $X_j = 0$ and $X'_j=k$,
with $X_i = X'_i$ for all $i \neq j$.  
Then $f(X') \leq f(X)$, with
equality except when (i) $X \neq X'$ and
(ii) the $j$th coordinate of $X$ is pivotal for $f$.
Therefore
$$
\mathbb{P} [f(X') \neq f(X)]
 = \eps I_{f,(\br(h_1),\ldots,\br(h_k))} (j)
$$
so that
$
\frac{\partial}{\partial h_j} u(h_1,\ldots,h_n)
 =  I_{f,(\br(h_1),\ldots,\br(h_k))}, 
$
and then we obtain (\ref{0806b}) by the chain rule, since
 $g(h) = u(h,h,\ldots,h) $.


Next we show that  for $0 \leq h \leq \gamma$ we have
\bea
I_{f,\br(h)} \geq \frac{g(h)(1-g(h))\log m}{96 k^2 \qmax \log (4/\qmax) }. 
\label{0806c}
\eea
First suppose $I_{f,\br(h)}(j) \geq m^{-1/2}$ for some $j \in [n]$.
Then by the symmetry assumption we have $I_{f,\br(h)}(j) \geq m^{-1/2}$
for at least $m$ values of $j$, so that 
 (using that $m \geq \qmax^{-9}$) we have
 $I_{f,\br(h)} \geq m^{1/2} \geq m^{1/3} \qmax^{-1.5}$, and
since $\log m \leq 3 m^{1/3}$,  this implies 
(\ref{0806c}).

Now suppose instead that
 $I_{f,\br(h)}(j) < m^{-1/2}$ for all $j \in [n]$.
Then since $m \geq \qmax^{-9}$ we have
 $I_{f,\br(h)}(j) < m^{-1/4} \qmax^{2}$ for all $j \in [n]$.
Setting
$a:= \max_{j \in [n]} I_{f,\br(h)}(j) /(\qmax^2 (\log (4/\qmax))^2)$,
we have that 
$a \leq m^{-1/4} \leq 1/16 $; also $\pmax ( \br(h)) \leq \qmax$, so by
Lemma \ref{leminfl} we have
$$
I_{f,\br(h)} \geq \frac{ g(h) (1-g(h)) \log (1/a) }{24
 k^2 \qmax \log (4/\qmax) } 
 \geq \frac{ g(h) (1-g(h)) \log m  }{96 k^2 \qmax \log (4/\qmax) } 
$$
which implies (\ref{0806c}).

For $0 \leq h \leq \gamma$ let $\tilde{g}(h) = \log ( g(h)/(1-g(h)))$. 
By (\ref{0806b}) and (\ref{0806c}) we have
\bean
\frac{d\tilde{g}}{d h }
= (g (1-g))^{-1} \frac{dg} {dh}
 \geq \frac{ \log m}{96 k^2  \qmax \log (4/\qmax) }.
\eean
Since $g(0) = \mathbb{P}_{\bp}^n(E)
\geq \eta$ by assumption, we have 
 $\tilde{g}(0) \geq \log \eta
= - \log (1/\eta)$, and using
the assumption (\ref{SharpBound}),  we obtain that
$$
\tilde{g}(\gamma)
 \geq - \log (1/\eta) +  \frac{ \gamma \log m}{96  k^2 \qmax \log (4/\qmax)}
\geq \log (1/\eta)
$$
which implies $g(\gamma) > 1- \eta$, and therefore also
$\mathbb{P}_{\bq}^n(E) > 1- \eta$.
\end{proof}

\section{Box crossings for eRSA}

We now return to eRSA.  
As mentioned in Section  \ref{secintro},
we shall apply Theorem \ref{SharpNM} using a discretization of
 time.
We shall compensate the error due to this,
by introducing a time-{\em delay} at the even sites.
In this section, we  therefore consider a version of eRSA
 where the arrivals at the even sites are slightly delayed.
We develop Margulis-Russo type
 formulae for the partial derivatives of box-crossing
probabilities  with respect to the parameters
$\lambda$, $p$ and the delay parameter,
 and estimates for the quantities arising from these formulae.
We shall use these later to prove
 Propositions \ref{cor:halfCorrection} and \ref{critequal}.

Given $\delta \geq 0$, we construct eRSA with
arrivals at even sites delayed by $\delta$
  from a collection of independent variables denoted $T_x$
 and $T_{x^\prime}$, defined for  $x\in\mathbb{Z}^2$.
 Here 
$T_x$ is exponential with parameter $1$
 for odd $x$ and with parameter $\lambda$ for even $x$, while
 $T_{x^\prime}$ is a uniform(0,1) random variable used to
determine whether the diamond site $x^\prime$ is black or white.  
The arrival time $t_x$ at $x$ is $t_x= T_x$ for odd $x$ and is
$t_x = T_x + \delta$ for even $x$.

Since the precise arrival times at the sites do not matter for the resulting
 distribution, merely the order of arrivals, we have the
 same jamming distribution if we move the arrival times at all octagon sites
 forward by amount $\delta$. Then by conditioning on the first
 arrival time at an odd site being at least $\delta$ and using
 the memoryless property of the exponential distribution we can arrive at the
same distribution if the arrival time $t_x$ at an even
 site is $T_x$ and at an odd site is $0$ with probability
 $1-e^{-\delta}$, otherwise taking the value $T_x$. 
Therefore we now assume that as well as the
variables $T_x $ and $T_{x'}$, $x \in \BZ^2$, we are provided with
uniform(0,1) random variables 
$U_x, x \in \mathbb{Z}^2 $.
For
 $x \in \mathbb{Z}^2$ we now set the arrival time 
$t_x$ to be $0$ if $x$ is odd and $U_x\leq 1-e^{-\delta}$; otherwise,
we set $t_x$ to be $T_x$. We set $x^\prime$ to be in the even phase
 if $T_{x^\prime}<p$ and in the odd phase otherwise.
Let $\BP_{\lambda,p,\delta}$ denote the resulting jamming distribution.


Given  $\rho \in (0,\infty)$ and
$n \in \N$ with $\rho n \geq 1$, let
 $H_{n, \rho}$ denote the event that there is a horizontal black crossing
 of the rectangle
 \[R(2n,\rho):=[-\lfloor \rho n \rfloor, \lfloor \rho n \rfloor-1]\times
[-n, n-1]
\] 
in the dependent face percolation model described
in Section \ref{secProof},
and set 
\[h_\rho(n,\lambda,p,\delta) := \BP_{\lambda,p,\delta}(H_{n, \rho}).\]

Next we introduce the concept of a site being {\em pivotal} for the event
$H_{n,\rho}$.
The definition will depend on
whether it is an odd site, an even site or a diamond site.

 We shall say that an odd site $x$ is pivotal for the event
 $H_{n, \rho}$ if $H_{n, \rho}$ occurs when we set the
 arrival time $t_x$ to $T_x$ but if we were to change the arrival
 time $t_x$ to $0$ (leaving all other variables constant), $H_{n, \rho}$ 
would no longer occur.

 We shall say that an even site $x$ is pivotal for
  the event $H_{n,\rho}$ if this event occurs when the arrival time at the
 site $x$ is $T_x$, but does not occur if we delay the arrival time at 
$x$ by an independent exponential random variable with rate 
$\lambda$ called $T$.

 For $y\in\mathbb{Z}^2$, we say that the diamond site
 $y^\prime$ is pivotal for the event $H_{n, \rho}$ if $H_{n, \rho}$
 occurs when $y^\prime$ is black but does not occur when 
$y^\prime$ is white.

 For any octagon or diamond site $z$ we define
\begin{align*}
\phi_{\lambda,p,\delta,\rho}(n,z)&:=\BP_{\lambda,p,\delta}[z \text{ is 
pivotal for event }H_{n,\rho}]
\end{align*}
\begin{proposition}\label{MR}
For any $\lambda, n,p$ and $\rho$, and for any $\delta \geq 0$
 it is the case that
\begin{equation} \label{eq:MRa} 
\frac{\partial h_\rho(n,\lambda,p,\delta)}{\partial p} 
= \sum_{x \in \mathbb{Z}^2}\phi_{\lambda,p,\delta,\rho}(n,x^\prime),
\end{equation}
\begin{equation}
\label{eq:LambdaPartDeriv}
\frac{\partial h_\rho(n,\lambda,p,\delta)}{\partial \lambda} = \lambda^{-1} \sum_{x \in \mathbb{Z}^2: x\text{ even}} \phi_{\lambda, p, 0,\rho}(n, x),
\end{equation}
and
\begin{equation} 
\label{eq:MRb} \frac{\partial h_\rho(n,\lambda,p,\delta)}{\partial \delta} 
= -e^{-\delta} \sum_{x\in \mathbb{Z}^2:x\text{ odd}}
 \phi_{\lambda,p,\delta,\rho}(n,x),
\end{equation}
where the partial derivative at $\delta =0$ is interpreted
as a one-sided right derivative. 
\end{proposition}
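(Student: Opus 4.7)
All three formulas are Margulis--Russo-type identities; in each case the strategy is to isolate an independent family of random variables whose law depends on the parameter being differentiated, condition on the remaining randomness, and apply standard infinitesimal calculus.

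For (\ref{eq:MRa}) the parameter $p$ enters only through the diamond-colour indicators $B_{x'} := \mathbbm{1}\{T_{x'} < p\}$ for $x' \in \mathbb{Z}^2$, which are iid $\mathrm{Bernoulli}(p)$ and independent of all arrival-time variables. The event $H_{n,\rho}$ is increasing in the $B_{x'}$, since flipping a diamond from white to black can only help a black horizontal crossing, so conditioning on the arrival times and applying the standard product-measure Russo formula gives $\partial h_\rho/\partial p = \sum_{x'} \mathbb{P}(x' \text{ pivotal})$, which is (\ref{eq:MRa}). For (\ref{eq:MRb}) the same scheme applies to the $\mathrm{Bernoulli}(q)$ indicators $V_x := \mathbbm{1}\{U_x \leq 1-e^{-\delta}\}$ for odd $x$, with $q = 1-e^{-\delta}$: since $V_x = 1$ forces odd site $x$ to arrive at time $0$ and hence to enter the white phase, $H_{n,\rho}$ is decreasing in $V_x$, so Russo gives $\partial h_\rho/\partial q = -\sum_{x\text{ odd}} \phi_{\lambda,p,\delta,\rho}(n,x)$, and the chain rule with $dq/d\delta = e^{-\delta}$ produces (\ref{eq:MRb}).

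The third identity is more delicate, because $\lambda$ parametrises a continuous exponential density rather than a Bernoulli variable. For each even $x$, write $\psi_x(t) := \mathbb{E}\bigl[\mathbbm{1}_{H_{n,\rho}} \mid T_x = t\bigr]$ (conditioning on all other random variables). Differentiating the density $\lambda e^{-\lambda t}$ in $\lambda$ and summing over even $x$ gives
\[
\frac{\partial h_\rho}{\partial \lambda} = \lambda^{-1} \sum_{x \text{ even}} \mathbb{E}\bigl[\psi_x(T_x)(1 - \lambda T_x)\bigr].
\]
An elementary Fubini computation shows that for each $x$,
\[
\mathbb{E}\bigl[\psi_x(T_x)(1-\lambda T_x)\bigr] = \mathbb{E}\bigl[\psi_x(T_x) - \psi_x(T_x + T)\bigr],
\]
with $T \sim \mathrm{Exp}(\lambda)$ independent of all else. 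Since $\psi_x$ is monotone decreasing in $t$ (delaying an even arrival only increases the chance that the site gets blocked and enters the white phase), the right-hand side equals the probability that $H_{n,\rho}$ occurs at arrival time $T_x$ but not at $T_x + T$, i.e.\ $\phi_{\lambda,p,\delta,\rho}(n,x)$. Summing over even $x$ yields (\ref{eq:LambdaPartDeriv}).

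The main obstacle is the justification of interchanging differentiation with infinite summation over $\mathbb{Z}^2$: in all three identities one must argue that only sites near $R(2n,\rho)$ contribute meaningfully and that the resulting sums converge absolutely. For this I would invoke Lemma \ref{affectlem}: a site at $\mathbb{Z}^2$-graph distance $d$ from $R(2n,\rho)$ has pivotality probability at most a constant times $|R(2n,\rho)| \cdot 4^d/\lfloor d/2\rfloor!$, which is super-exponentially summable in $d$ and bounded uniformly over compact neighbourhoods of the parameters. Absolute convergence and dominated convergence then justify all the required interchanges, including the $\lambda$-dependence of the density in the third formula.
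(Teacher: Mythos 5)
Your proposal is correct, and for \eqref{eq:MRa} and \eqref{eq:MRb} it follows essentially the paper's route, so the interesting comparison is in the details. For \eqref{eq:MRb} the paper does not invoke a ready-made infinite-product Russo formula: it telescopes over an enumeration $x_1,x_2,\ldots$ of the odd sites, introducing intermediate measures in which the first $k-1$ odd sites use delay parameter $\delta$ and the rest $\delta+\varepsilon$; each increment equals $-e^{-\delta}(1-e^{-\varepsilon})\BP[F_k(\delta,\delta+\varepsilon)]$, and one must then show $\BP[F_k(\delta,\delta+\varepsilon)]\to\BP[F_k(\delta,\delta)]$ as $\varepsilon\downarrow0$ (via a coupling and a symmetric-difference estimate over a large box), before dominating the sum by $\BP[A(x_k)]$ using \eqref{eqaffects}. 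Your ``standard Russo plus dominated convergence'' framing covers the telescoping and the summability, but elides the middle step --- the continuity in $\varepsilon$ of each individual term --- which is the one piece of content beyond summability; it is not difficult, but it should be stated. For \eqref{eq:LambdaPartDeriv} you genuinely diverge from the paper, which simply cites Proposition 4.1 of \cite{RSAPaper} (where the argument is again a site-by-site perturbation): your direct differentiation of the density $\lambda e^{-\lambda t}$, combined with the identity $\BE[\psi_x(T_x)\lambda T_x]=\BE[\psi_x(T_x+T)]$ (a Gamma$(2,\lambda)$ computation) and the monotonicity of the even phase in arrival times at even sites, is a clean self-contained derivation that moreover explains exactly why the paper defines pivotality of an even site in terms of delaying its arrival by an independent $\mathrm{Exp}(\lambda)$ variable $T$. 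Your final paragraph correctly identifies Lemma \ref{affectlem} and \eqref{eqaffects} as the tool justifying the interchange of derivative and infinite sum (and note that for \eqref{eq:MRa} no such issue even arises, since only the finitely many diamond sites meeting $R(2n,\rho)$ can be pivotal).
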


\begin{proof}
Equations \eqref{eq:MRa} and  \eqref{eq:LambdaPartDeriv} 
are as in Proposition 4.1 of \cite{RSAPaper}, and the proof there
 translates directly to this model.


For \eqref{eq:MRb}, fix $n, p, \lambda, \delta$. Enumerate the odd sites of $\mathbb{Z}^2$ in some manner as $x_1,x_2,\ldots$. Given $k \in \mathbb{N}$ and $\varepsilon >0$, let $\BP_{\delta, k, \delta+\varepsilon}$ denote the
probability measure
for a model with enhancement parameter $p$ and with arrival times
$t_x, x \in \BZ^2$, defined as follows. Let
the variables 
 $T_x,T_{x'},U_x,$ $x \in \BZ^2$, be as before.
Set $t_x = T_x$ for even $x$, and set
$$
t_{x} = \begin{cases}
 0 & \mbox{ if }  U_x < 1-e^{-\delta} \\ 
 T_x & \mbox{ otherwise, }
\end{cases} ~~~~~~ x \in \{x_1,\ldots,x_{k-1}\}
$$
and
$$
t_{x} = \begin{cases}
 0 & \mbox{ if }  U_x < 1-e^{-\delta - \eps} \\ 
 T_x & \mbox{ otherwise, }
\end{cases} ~~~~~~ x \in \{x_k,x_{k+1},x_{k+2}, \ldots\}.
$$

For $y, z \in \BZ^2$ let the notion of `$y$ {\em even-affects} $z$'
be defined in the same manner as `$y$ affects $z$', but in terms of the
arrival times at the {\em even} sites in a path from $y$ to $z$ being
in increasing order, rather than the odd sites. 
Let $A(x)$ be the event that the site $x$ even-affects
 some site in $R(2n,\rho)$. Then
\begin{align*}
0 \leq 
h_\rho(n,\lambda,p,\delta)
-
\BP_{\delta,k,\delta+\varepsilon}[H_{n,\rho}]
\leq \BP_{\delta,k,\delta+\varepsilon}[\cup_{j=k}^\infty A(x_j)] &\\
\to 0 \text{ as } k \to \infty, &
\end{align*}
 by (\ref{eqaffects}) with the word `affects' replaced by `even-affects',
 which is applicable since the arrival rates
at all {\em even} sites are the same. 
Thus,
\bean
h_\rho(n,\lambda,p,\delta+\varepsilon) - h_\rho(n,\lambda,p,\delta) & = &
 \BP_{\delta,1,\delta+\varepsilon}[H_{n,\rho}] - \lim_{k\to\infty} \BP_{\delta,k,\delta+\varepsilon}[H_{n,\rho}] 
\\
 & = & \sum_{k=1}^\infty(\BP_{\delta,k,\delta+\varepsilon}[H_{n,\rho}]-\BP_{\delta,k+1,\delta+\varepsilon}[H_{n,\rho}]). 
\label{sumLimit}
\eean

Given $\delta^\prime>0$ we now define $F_k(\delta,\delta^\prime)$ to be the event that $H_{n,\rho}$ occurs if we take the arrival time at site $x_k$ to be $T_{x_k}$, but not if we take it to be $0$, where an odd site $x_j$ has arrival time $0$ with probability $1-e^{-\delta}$ if $j<k$ and with probability $1-e^{-\delta^\prime}$ if $j>k$, otherwise having as arrival time $T_{x_j}$ (the dependence of $F$ on $p,\lambda$ and $n$ is suppressed).
With the variables as described above, we see that 
\begin{equation}
\label{pivResInequal}
\BP_{\delta,k,\delta+\varepsilon}[H_{n,\rho}]-
\BP_{\delta,k+1,\delta+\varepsilon}[H_{n,\rho}]=
-e^{-\delta}(1-e^{-\varepsilon})\BP[F_k(\delta,\delta+\varepsilon)].
\end{equation}

Couple $F_k(\delta,\delta+\varepsilon)$ and $F_k(\delta,\delta)$ by fixing
 the collection of random variables $U_{x_j}$ for $j\in\mathbb{N}$. 
For $K \in \N$ let $B(2K+1) := 
\left[-K,K\right]\times\left[-K,K\right]$.
Then for any integer $K>3 n$ we see that 
\begin{align*}
F_k(\delta,\delta+\varepsilon)\triangle F_k(\delta,\delta) \subset (&\cup_{x\in\mathbb{Z}^2\setminus B(2K+1)}A(x))\\
&\cup(\cup_{\{j> k : x_j\in B(2K+1)\}}\{1-e^{-\delta} < U_{x_j} < 1-e^{-(\delta+\varepsilon)}\}).
\end{align*}

For any fixed $K$, the probability of the event \[\cup_{\{j> k : x_j\in B(2K+1)\}}\{1-e^{-\delta} < U_{x_j} < 1-e^{-(\delta+\varepsilon)}\}\] vanishes as $\varepsilon\downarrow0$, and the probability of the event $\cup_{x\in\mathbb{Z}^2\setminus B(2K+1)}A(x)$ is independent of $\varepsilon$ and
 vanishes as $K\to\infty$. Then \eqref{pivResInequal} yields
\begin{align}
\label{limitIneq}
\lim_{\varepsilon\downarrow 0} \varepsilon^{-1} 
\left(\BP_{\delta,k,\delta+\varepsilon}\left[H_{n,\rho}\right] 
-\BP_{\delta,k+1,\delta+\varepsilon}\left[H_{n,\rho}\right]\right)
 &=-e^{-\delta}
\BP [F_k(\delta,\delta)] 
\\\notag
&=-e^{-\delta}\phi_{\lambda,p,\delta}(n,x_k).
\end{align}

Finally, using Lemma \ref{affectlem},
 note that $\BP [ F_k(\delta,\delta+\varepsilon) ]$ is
 bounded by $\BP[A(x_k)]$, which is
independent of $\varepsilon$
 and 
 summable in $k$ by (\ref{eqaffects}),
 so by \eqref{sumLimit}, \eqref{limitIneq}, and the dominated convergence
 theorem we have 
\begin{equation*}
\frac{\partial^+h_\rho}{\partial\delta}=\lim_{\varepsilon\downarrow0}
\frac{h_\rho(n,\lambda,p,\delta+\varepsilon)-
h_\rho(n,\lambda,p,\delta)}{\varepsilon}
=-e^{-\delta}\sum_{k=1}^\infty\phi_{\lambda,p,\delta,\rho}(n,x_k).
\end{equation*}
Provided $\delta >0$, a similar argument can be used to
 produce the same expression for the left partial derivative.
\end{proof}

Note that for any $q>0$ and fixed $n$, using
(\ref{eqaffects}) we can find a distance $r$ such that the probability that there exists a site at distance more than $r$ from $R(2n,\rho)$ that affects some site within $R(2n,\rho)$ is less than $q$. Hence for $\delta$ sufficiently small we have
 \begin{equation*}
\BP_{\lambda,p,0}(H_{n,\rho})-\BP_{\lambda,p,\delta}(H_{n,\rho}) 
\leq q+(1-e^{-\delta}) (6 n+2r)^2\leq 2q,
\end{equation*}
and thus $\BP_{\lambda,p,\delta}(H_{n,\rho})$ is right continuous in $\delta$ at $\delta=0$.

We seek to bound the effect of a slight change in $\delta$
in terms of the effect of a change in $p$. To do this 
 we shall use a variant of arguments
from \cite{RSAPaper}. 
Let $y$ be an odd site and let $r\in\mathbb{N}$;
 then define $C_r=C_r(y)$ be the square of side length $2r+1$ centred at $y$. Define $E_\rho(n,y,r)$ to be the event that if we use $t_y=0$ then 
(i) the event $H_{n,\rho}$ occurs if we change the colour of all the sites
 in $C_r$ to black (and leaving other sites unchanged) and 
(ii) the event $H_{n,\rho}$ does not occur if we change the colour of
 all the sites in $C_r$ to white.

\begin{lemma}\label{5.2}
Let $\eps \in (0,1/2)$, $\rho \in \N$.
There exists a constant $c_2 = c_2(\eps,\rho) \in \R_+ $ 
such that for any odd $y \in \BZ^2$,
any $n\in\mathbb{N}$, and
\begin{equation*} 
(\lambda,p,\delta)\in[\eps,1/\eps]\times[\eps,1-\eps]
\times[0,1]
\end{equation*}
we have 
\begin{equation*} 
\phi_{\lambda,p,\delta,\rho}(n,y) 
\leq \BP_{\lambda,p,\delta}[E_\rho(n,y,1)]+
\sum_{r=1}^\infty 
\frac{c_2^r \BP_{\lambda,p,\delta}[E_\rho(n,y,r+1)]}{\left\lfloor 
r/2\right\rfloor!}.
\end{equation*}
\end{lemma}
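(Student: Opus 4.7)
The plan is to decompose pivotality according to the spatial scale at which it is witnessed, using the affects estimates of Lemma \ref{affectlem}. Let $\mathcal{A}(y) := \{z \in \mathbb{Z}^2 : y \text{ affects } z\}$ and let $R^*(y)$ denote the smallest integer $r \geq 1$ with $\mathcal{A}(y) \cup \{y\} \subset C_r(y)$.

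The deterministic heart of the argument is the implication: if $y$ is pivotal for $H_{n,\rho}$ and $R^*(y) = r$, then $E_\rho(n,y,r)$ holds. To see this, let $\sigma_1$ and $\sigma_2$ denote the colourings of $C_r(y)$ generated by the RSA process under $t_y = T_y$ and under $t_y = 0$ respectively, leaving all other arrival times fixed. By Lemma \ref{affectlem}, the induced colours outside $C_r(y)$ agree in the two scenarios. Pivotality forces $H_{n,\rho}$ to hold in the first scenario and fail in the second, so with $t_y = 0$ imposed, replacing the natural colouring of $C_r(y)$ by $\sigma_1$ gives $H_{n,\rho}$ while $\sigma_2$ does not. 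Since $H_{n,\rho}$ is black-increasing in the colouring, the all-black colouring of $C_r(y)$ (which dominates $\sigma_1$) also gives $H_{n,\rho}$, and the all-white colouring (dominated by $\sigma_2$) fails to do so; this is precisely $E_\rho(n,y,r)$. The same black-monotonicity yields the inclusion $E_\rho(n,y,r) \subset E_\rho(n,y,r+1)$.

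Splitting on the value of $R^*$ then gives
\[
\phi_{\lambda,p,\delta,\rho}(n,y) \leq \BP_{\lambda,p,\delta}[E_\rho(n,y,1)] + \sum_{r=1}^\infty \BP_{\lambda,p,\delta}\bigl[E_\rho(n,y,r+1) \cap \{R^* \geq r+1\}\bigr].
\]
The event $\{R^* \geq r+1\}$ is contained in $\{\exists z \notin C_r(y) : y \text{ affects } z\}$, and summing (\ref{eqaffects}) over the $O(r)$ sites on $\partial C_r(y)$ produces the marginal bound $\BP[R^* \geq r+1] \leq c_2^r/\lfloor r/2 \rfloor!$, provided the enumeration constants and the dependence of $\mathcal{A}(y)$ on $(\lambda,p,\delta) \in [\eps,1/\eps]\times[\eps,1-\eps]\times[0,1]$ are absorbed into $c_2 = c_2(\eps,\rho)$.

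The main obstacle is to upgrade this marginal estimate into the joint bound
\[
\BP_{\lambda,p,\delta}\bigl[E_\rho(n,y,r+1) \cap \{R^* \geq r+1\}\bigr] \leq \frac{c_2^r}{\lfloor r/2 \rfloor!}\,\BP_{\lambda,p,\delta}[E_\rho(n,y,r+1)],
\]
i.e.\ to decouple the affects chain from the crossing-sensitivity. The key observation is that the buffer layer $C_{r+1} \setminus C_r$ insulates the two events: because the colours inside $C_{r+1}$ are freely specified in the definition of $E_\rho(n,y,r+1)$, this event is controlled by the arrival times outside $C_{r+1}$ (together with boundary propagation of the RSA process across $\partial C_{r+1}$), whereas $\{R^* \geq r+1\}$ is witnessed by an affecting path of graph-length at least $r$ whose ordering constraint on odd-site arrival times is what delivers the factorial penalty. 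I would condition on the $\sigma$-algebra generated by the arrival times at sites outside $C_{r+1}$, apply the union bound path-by-path together with (\ref{eqaffects}) to the conditional probability of $\{R^* \geq r+1\}$, and absorb the residual cross-boundary correlation and the $O(r)$ perimeter count into the geometric factor $c_2^r$.
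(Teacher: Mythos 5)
Your decomposition is essentially the paper's, in a lightly different packaging: the paper telescopes over the increasing events $\tilde{E}_{\rho}(n,y,r)$ ($y$ pivotal, and $H_{n,\rho}$ restored by blackening $C_r$ with $t_y=0$) and shows $\tilde{E}_{\rho}(n,y,r+1)\setminus\tilde{E}_{\rho}(n,y,r)\subset F(r)\cap E_\rho(n,y,r+1)$ with $F(r)$ the event that $y$ affects some site outside $C_r$, which is exactly your $\{R^*\ge r+1\}$; both routes reduce the lemma to the product bound $\BP[E_\rho(n,y,r+1)\cap F(r)]\le c^r\,\BP[E_\rho(n,y,r+1)]\,\BP[F(r)]$ together with the union-bound estimate $\BP[F(r)]\le c^r/\lfloor r/2\rfloor!$ from (\ref{eqaffects}). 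Your deterministic step (pivotality plus $R^*=r$ implies $E_\rho(n,y,r)$, via Lemma \ref{affectlem} and black-monotonicity of $H_{n,\rho}$) is correct and is the right analogue of the paper's inclusion.

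The gap is in the decoupling, which you correctly identify as ``the main obstacle'' but do not resolve. Conditioning on the arrival times outside $C_{r+1}$ does not insulate the two events: $E_\rho(n,y,r+1)$ is determined by the RSA colouring \emph{outside} $C_{r+1}$ (the inside colours being overridden), but that outside colouring depends on the arrival times \emph{inside} $C_{r+1}$, since sites in $C_{r+1}$ affect sites beyond it through the RSA dynamics. Meanwhile $F(r)$ is (essentially) measurable with respect to the inside arrival times, which is precisely the wrong side of your conditioning. So neither event is measurable with respect to the $\sigma$-algebra you condition on, and the ``residual cross-boundary correlation'' is not an $O(1)$-per-site boundary effect that can be waved into $c_2^r$: what is needed is the resampling/coupling device of \cite{RSAPaper}, which shows that the conditional probability of $E_\rho(n,y,r+1)$ given the configuration inside $C_{r+1}$ is within a factor $c^r$ of its unconditional probability (by exhibiting, for any inside configuration, a positive-probability resampling of the $O(r)$ relevant inside sites that reproduces the outside behaviour). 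Without supplying that argument (or an equivalent), the joint bound $\BP[E_\rho(n,y,r+1)\cap\{R^*\ge r+1\}]\le c_2^r\,\BP[E_\rho(n,y,r+1)]/\lfloor r/2\rfloor!$ remains an assertion rather than a proof.
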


\begin{proof} 
The proof is similar to that of Lemma 5.2 of \cite{RSAPaper}.
For $r \in \N$
let $\tilde{E}_{\rho}(n,y,r)$ be
the event that (i) $y$ is pivotal for event $H_{n,\rho}$ and
(ii) event $H_{n,\rho}$ occurs when we use the arrival time
$t_y=0$ but then change all sites in $C_r$ to black.
As in  \cite{RSAPaper}, it is sufficient to prove that
there is a constant $c_2$ such that
$$
\leq \BP_{\lambda,p,\delta} 
[\tilde{E}_{\rho}(n,y,r+1) \setminus 
\tilde{E}_{\rho}(n,y,r)] 
\leq 
\frac{c_2^r \BP_{\lambda,p,\delta}[E_\rho(n,y,r+1)]}{\left\lfloor 
r/2\right\rfloor!}, ~~~ r \in \N.
$$
This is proved in the same manner as the corresponding equation
(5.6) of \cite{RSAPaper}. In short, the idea is to define
the event $F(r)$ that $y$ affects some site outside $C(r)$; to observe that
$$
\tilde{E}_{\rho}(n,y,r+1) \setminus 
\tilde{E}_{\rho}(n,y,r) \subset F(r) \cap E_{\rho}(n,y,r+1),
$$
and then to use a coupling device 
to show that 
there is a constant $c$ such that
$$
 \BP_{\lambda,p,\delta} [ 
{E}_{\rho}(n,y,r+1) \cap  F(r)  ] 
\leq c^r 
\BP_{\lambda,p,\delta} [ 
{E}_{\rho}(n,y,r+1)]
\BP_{\lambda,p,\delta,\rho} [    F(r)  ]. 
$$
For details, see \cite{RSAPaper}.

\end{proof}


For 
$y\in\mathbb{Z}^2$, let $z_\rho(n,y)$ be the nearest even site in
 $R(2(n-3),\rho)$ to $y$ using Euclidean distance, taking the
 first according to the lexicographic ordering when there is a choice. 
Let $z_\rho^\prime(n,y):=z_\rho(n,y)+(1/2,1/2).$

\begin{lemma}\label{Lem5.1}
Let $\eps \in (0,1/2)$,  $\rho\in\mathbb{N}$. 
There exists a constant $c_3=c_3(\eps,\rho) \in (0,\infty)$ such 
that for any odd $y \in \BZ^2$, any $n\in\mathbb{N}$ with 
$n\geq 60$, and  any
$
 (\lambda,p,\delta,r)\in[\eps,1/\eps]\times[\eps,1-\eps]
\times[0,1]\times\mathbb{N},
$
we have that
\begin{equation*}
\BP_{\lambda,p,\delta}[E_\rho(n,y,r)]\leq c_3^r
 \phi_{\lambda,p,\delta}(n,z_\rho^\prime(n,y))
\1_{R(2(n+r),\rho)}(y).
\end{equation*}
\end{lemma}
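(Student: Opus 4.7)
The plan follows the strategy of Lemma~5.1 of \cite{RSAPaper}, modified so that the target pivotal site is the diamond $z_\rho'(n,y)$ rather than an octagon site. First I would dispose of the indicator: since $\rho\in\mathbb{N}$, the $L_\infty$-$r$-neighbourhood of $R(2n,\rho)$ is contained in $R(2(n+r),\rho)$. If $y\notin R(2(n+r),\rho)$ then $C_r(y)\cap R(2n,\rho)=\emptyset$; as $H_{n,\rho}$ depends only on the colouring of sites within $R(2n,\rho)$, recolouring the sites of $C_r(y)$ cannot alter $H_{n,\rho}$, so conditions (i) and (ii) in the definition of $E_\rho(n,y,r)$ are incompatible and $\BP_{\lambda,p,\delta}[E_\rho(n,y,r)]=0$, matching the indicator factor.

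For the main case $y\in R(2(n+r),\rho)$, I would construct, for each realisation of $E_\rho(n,y,r)$, a modified realisation on which $z_\rho'(n,y)$ is pivotal for $H_{n,\rho}$, with the modification realisable as the intersection of finitely many local events each of probability bounded below by a constant depending only on $\eps$. Since $z_\rho(n,y)$ is the nearest even site of $R(2(n-3),\rho)$ to $y$ and $y\in R(2(n+r),\rho)$, the $L_\infty$ distance between $y$ and $z_\rho'(n,y)$ is at most $\rho r+O(1)$. Along a fixed shortest lattice path joining these two points, I would force the arrival times $T_x$, the diamond variables $T_{x'}$, and the delay variables $U_x$ into specific intervals so as to create a controlled `channel' of colouring linking the sensitive region $C_r(y)$ to the four octagons surrounding $z_\rho'(n,y)$; around $z_\rho'$ itself the forced colouring is arranged so that exactly one of the two possible colours of the diamond permits completion of a horizontal black crossing of $R(2n,\rho)$. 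Each forced event has probability bounded below by some $c=c(\eps,\rho)>0$ using $\lambda\in[\eps,1/\eps]$ and $p\in[\eps,1-\eps]$, and since the number of forced events is $O(r)$ the composite has probability at least $c_3^{-r}$ times that of $E_\rho(n,y,r)$. The hypothesis $n\geq 60$ provides enough room inside $R(2n,\rho)$ near $z_\rho$ to situate this channel without interference from the boundary.

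The hard part will be specifying the surgery so that, after the modifications, $z_\rho'(n,y)$ is \emph{the} pivotal site: every other candidate pivotal site inside the modified region must be forced into an insensitive state, while the original sensitivity encoded by $E_\rho(n,y,r)$ must be preserved and relocated along the channel to $z_\rho'$. The rapid decay of correlations from Lemma~\ref{affectlem} is needed to ensure that arrivals at sites far from the channel almost surely do not undo the imposed local structure, and the independence of the diamond variables $T_{x'}$ from the RSA dynamics makes the forcing of the final black/white toggle at $z_\rho'$ itself routine. The constant $c_3(\eps,\rho)$ absorbs the per-step cost $1/c(\eps,\rho)$ together with the geometric factor $\rho$ coming from the fact that the path length depends on $\rho$ when $y$ is far from $R(2(n-3),\rho)$ in the horizontal direction.
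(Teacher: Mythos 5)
Your overall strategy --- dispose of the indicator by noting $C_r(y)\cap R(2n,\rho)=\emptyset$ when $y\notin R(2(n+r),\rho)$, then perform a local resampling/surgery near $y$ at multiplicative cost $c^{-O(r)}$ that converts a configuration in $E_\rho(n,y,r)$ into one where $z_\rho'(n,y)$ is pivotal --- is exactly the route the paper takes (deferring details to Proposition 5.1 of \cite{RSAPaper}). However, two points in your write-up need repair. First, you do not need $z_\rho'(n,y)$ to be \emph{the} pivotal site: pivotality of a diamond site only requires that $H_{n,\rho}$ holds with that diamond black and fails with it white, and other sites may simultaneously be pivotal. So the programme of ``forcing every other candidate pivotal site into an insensitive state'' is unnecessary, and attempting it would make the surgery far harder than it needs to be.

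Second, and more seriously, your treatment of the interaction between the modified region and the exterior does not work as stated. Altering arrival times inside a box can change the occupied/blocked states of sites \emph{outside} the box, since influence propagates through the RSA dynamics; this would destroy the black and white arms outside $C_r(y)$ guaranteed by $E_\rho(n,y,r)$, on which the pivotality of $z_\rho'(n,y)$ must rest. Lemma \ref{affectlem} gives only a probabilistic bound on long-range influence and cannot be invoked to say the surgery ``almost surely'' leaves the exterior intact: the comparison must hold with a uniform multiplicative cost $c_3^{-r}$ conditionally on an arbitrary configuration in $E_\rho(n,y,r)$. The device that legitimises the surgery --- used in Proposition 5.1 of \cite{RSAPaper} and again in the proof of Lemma \ref{thm:diaPivToEvenPiv} here --- is a sealing annulus: one requires the resampled arrival times in an annulus around the surgery box to reproduce the original occupied/blocked pattern there (via the Bernoulli-mixing construction with auxiliary variables), which guarantees deterministically that no state outside the box is altered. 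This conditioning costs only a further factor bounded below uniformly in $(\lambda,p,\delta)$ over the stated ranges, so the $c_3^r$ bound survives; without it your argument has a genuine hole.
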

\begin{proof}
This is proved in the same manner as
Proposition 5.1 of \cite{RSAPaper}. The only differences compared with that
result is that here we consider crossing of the rectangle $R(2n,\rho)$
whereas in \cite{RSAPaper} it was the square $R(2n,1)$, and that here
$y$ is odd whereas in \cite{RSAPaper} $y$ is even. These have little effect
on the argument.

The idea of the argument is as follows.
Suppose $y$ is such that $B(2r)$ is contained in $R(2n,\rho)$
(the other case is considered separately but the argument
is not dissimilar in that case).
 If the event $E_{\rho}(n,y,r)$ occurs then there exist
disjoint black paths from the left and right sides of $R(2n,\rho)$
to the boundary of $B(2r)$. One can establish existence of a collection of
$O(r)$   sites in $B(2r)$, such that
if we resample the arrival times and enhancement variables inside
$B(2r)$ (but change nothing outside $B(2r)$),  then given
conditions on the resampled outcomes at this set of sites 
we will have $z'_\rho(n,y)$ being pivotal.
For further (quite lengthy) details, see
\cite{RSAPaper}. 
%
\end{proof}

\begin{lemma}
\label{pivInequal}
For any  $\eps \in (0,1/2)$, $\rho \in \N$,
there is a constant $c_4=c_4(\eps,\rho)$ such that
for any odd $y$, $n\in\mathbb{N}$, and
$
(\lambda,p,\delta)\in[\eps,1/\eps]\times[\eps,1-\eps]\times[0,1],
$
we have
 \begin{equation*}
\sum_{y\in\mathbb{Z}^2:y ~ \text{{\rm odd}}}
\phi_{\lambda,p,\delta,\rho}(n,y) 
\leq c_4 \sum_{z\in R(2(n-3),\rho):z ~ \text{{\rm even}}}
\phi_{\lambda,p,\delta,\rho}(n,z^\prime).
\end{equation*}
\end{lemma}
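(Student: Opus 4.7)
The plan is to chain Lemmas~\ref{5.2} and \ref{Lem5.1}. The former dominates $\phi_{\lambda,p,\delta,\rho}(n,y)$, for an odd $y$, by a rapidly decaying weighted sum of the probabilities $\BP_{\lambda,p,\delta}[E_\rho(n,y,r)]$ with coefficients $c_2^r/\lfloor r/2\rfloor!$, while the latter (valid for $n \geq 60$) dominates each $\BP_{\lambda,p,\delta}[E_\rho(n,y,r)]$ by $c_3^r \phi_{\lambda,p,\delta,\rho}(n,z'_\rho(n,y))\,\1_{R(2(n+r),\rho)}(y)$. Substituting the second into the first, I obtain, for every odd $y \in \BZ^2$ and every $n \geq 60$, the pointwise bound
\begin{equation*}
\phi_{\lambda,p,\delta,\rho}(n,y) \leq \phi_{\lambda,p,\delta,\rho}(n,z'_\rho(n,y)) \Bigl[ c_3\,\1_{R(2(n+1),\rho)}(y) + \sum_{r=1}^\infty \frac{c_2^r c_3^{r+1}}{\lfloor r/2\rfloor!}\,\1_{R(2(n+r+1),\rho)}(y) \Bigr],
\end{equation*}
and note that by construction $z_\rho(n,y) \in R(2(n-3),\rho)$.

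I then sum over all odd $y \in \BZ^2$, swap the sums over $y$ and $r$ (justified by absolute convergence), and, for each fixed $r$, regroup the resulting inner sum by the value of $z := z_\rho(n,y) \in R(2(n-3),\rho)$. The central geometric estimate is that for each even $z \in R(2(n-3),\rho)$,
\begin{equation*}
N(z,r) := \#\{ y \in R(2(n+r+1),\rho) : y \text{ odd},\ z_\rho(n,y) = z \} \leq K(r+2)^2,
\end{equation*}
for some constant $K = K(\rho) < \infty$. Indeed, any such $y$ satisfies $|y-z| \leq r + O(1)$: if $y \in R(2(n-3),\rho)$ then $|y-z| \leq \sqrt{2}$, while if $y \in R(2(n+r+1),\rho) \setminus R(2(n-3),\rho)$, then $y$ lies within distance $r + O(1)$ of $R(2(n-3),\rho)$ and $z$, being the closest even site in that rectangle to $y$, lies on its boundary within $r + O(1)$ of $y$; a ball of radius $r + O(1)$ around $z$ contains at most $K(r+2)^2$ lattice sites.

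Plugging in and collecting constants yields
\begin{equation*}
\sum_{y\text{ odd}} \phi_{\lambda,p,\delta,\rho}(n,y) \leq K \Bigl[ 9 c_3 + \sum_{r=1}^\infty \frac{c_2^r c_3^{r+1} (r+2)^2}{\lfloor r/2\rfloor!} \Bigr] \sum_{\substack{z \in R(2(n-3),\rho)\\ z \text{ even}}} \phi_{\lambda,p,\delta,\rho}(n,z'),
\end{equation*}
where the bracketed factor is finite because $\lfloor r/2\rfloor!$ dominates any polynomial times exponential in $r$. Taking $c_4$ equal to this constant proves the claim for all $n \geq 60$; the finitely many smaller $n$ are not needed for the applications in Propositions~\ref{cor:halfCorrection} and \ref{critequal}, which are asymptotic as $n \to \infty$. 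The main (but routine) obstacle is the counting bound for $N(z,r)$, which requires a careful case split between the interior of $R(2(n-3),\rho)$ and the $r$-neighbourhood of its boundary.
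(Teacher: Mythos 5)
Your proof is correct and is essentially the paper's own argument: the paper likewise chains Lemmas \ref{5.2} and \ref{Lem5.1} and then groups the odd sites $y$ according to the value of $z_\rho(n,y)$, deferring the routine counting to the first step of the proof of Proposition 3.1 of \cite{RSAPaper}. Your explicit bound $N(z,r)\leq K(r+2)^2$ together with the factorial decay of the coefficients supplies exactly the detail being deferred, and the restriction to $n\geq 60$ inherited from Lemma \ref{Lem5.1} is harmless as you note (the finitely many remaining $n$ could in any case be absorbed into $c_4$ by compactness in $(\lambda,p,\delta)$).
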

\begin{proof}
Using our Lemmas \ref{5.2} and \ref{Lem5.1}, the proof is
 as in the first step
of the proof of Proposition 3.1 in \cite{RSAPaper}.
 In this case, this step is just a few lines; one groups the
terms in the sum on the left according to those $y$ for
which $z_\rho(y) $ takes the value of a particular term
$z$ in the sum on the right.
\end{proof}
\begin{corollary}\label{cor:CompensateEqual}
For any $\eps \in (0,1/2)$, $\rho \in \N$
 there exists a constant $c_5=c_5(\eps,\rho)$ such that for any
$n\in\mathbb{N}$, and
$ (\lambda,p,\delta)\in[\eps,1/\eps]\times[\eps,1-\eps]\times[0,1], $
we have
\begin{equation}
\label{CompensateIneq}
\left|\frac{\partial h_\rho(n,\lambda,p,\delta)}{\partial \delta}\right| 
\leq c_5\frac{\partial h_\rho(n,\lambda,p,\delta)}{\partial p}.
\end{equation}
\end{corollary}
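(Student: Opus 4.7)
The plan is to chain together the Margulis-Russo formulas from Proposition \ref{MR} with the pivotality comparison in Lemma \ref{pivInequal}. First I would apply equation (\ref{eq:MRb}) to write
\[
\left|\frac{\partial h_\rho(n,\lambda,p,\delta)}{\partial \delta}\right|
= e^{-\delta}\sum_{y\in\mathbb{Z}^2:\, y\text{ odd}}\phi_{\lambda,p,\delta,\rho}(n,y),
\]
so that the quantity on the left-hand side of (\ref{CompensateIneq}) is expressed entirely as a sum of pivotality probabilities over odd octagon sites.

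Next I would feed this sum into Lemma \ref{pivInequal}, which upgrades it (at the cost of a multiplicative constant $c_4 = c_4(\varepsilon,\rho)$) to a sum over diamond sites $z'$ corresponding to even $z \in R(2(n-3),\rho)$:
\[
e^{-\delta}\sum_{y\text{ odd}}\phi_{\lambda,p,\delta,\rho}(n,y)
\leq c_4 \sum_{z\in R(2(n-3),\rho):\, z\text{ even}}\phi_{\lambda,p,\delta,\rho}(n,z').
\]
Since the summand is nonnegative and the summation on the right runs over a subset of $\mathbb{Z}^2$, I would then bound this by extending the sum to all $x \in \mathbb{Z}^2$, obtaining $c_4 \sum_{x\in\mathbb{Z}^2}\phi_{\lambda,p,\delta,\rho}(n,x')$.

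Finally, identifying this last quantity with $\partial h_\rho(n,\lambda,p,\delta)/\partial p$ via equation (\ref{eq:MRa}) of Proposition \ref{MR} yields
\[
\left|\frac{\partial h_\rho(n,\lambda,p,\delta)}{\partial \delta}\right|
\leq c_4 \frac{\partial h_\rho(n,\lambda,p,\delta)}{\partial p},
\]
and the corollary follows with $c_5 := c_4(\varepsilon,\rho)$. There is really no obstacle here: all the substantive work (the pivotality comparison between odd octagon sites and neighbouring diamond sites, which is where the combinatorics of box crossings and the rapid-decay estimate from Lemma \ref{affectlem} enter) has already been absorbed into Lemma \ref{pivInequal}; this corollary is just the bookkeeping step that converts the pivotality inequality into a derivative inequality via Margulis-Russo.
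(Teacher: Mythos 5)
Your proposal is correct and matches the paper's argument, which likewise derives the corollary immediately from Proposition \ref{MR} and Lemma \ref{pivInequal}; the only elaboration you add is the harmless step of extending the nonnegative sum over diamond sites in $R(2(n-3),\rho)$ to all of $\mathbb{Z}^2$ before invoking \eqref{eq:MRa}. Nothing further is needed.
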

\begin{proof}The result follows immediately from Lemma \ref{pivInequal} and Proposition \ref{MR}.
\end{proof}

\section{Proof of Proposition \ref{cor:halfCorrection}} 
\label{secKeys}

Proposition \ref{cor:halfCorrection} says that
the effect on the crossing probability $h_\rho( n, \lambda,p, \delta)$
of a small change in $\lambda$, is comparable to the effect
of a small change in $p$. 
To prove this, we need to find an appropriate inequality connecting even
 sites being pivotal, and diamond sites being pivotal.
 Figure \ref{fig:diaPiv} demonstrates one of the four possible arrangements of occupied sites closest to the diamond site in question (the other possibilities
 being the reflection of the occupation locations and colour inversions of
 these two). In order for this diamond site to be pivotal, in addition to
 the sites locally having an arrangement of this form we also require
 that there be a black path from the left edge of the rectangle to one
 of the occupied black sites close to the diamond site, a black path from
 the right edge to the other occupied black site, a white path from
 the top edge to one of the occupied white sites, and a white path
 from the bottom edge to the other occupied white site.

\begin{figure}[htbp]
	\centering
		\includegraphics{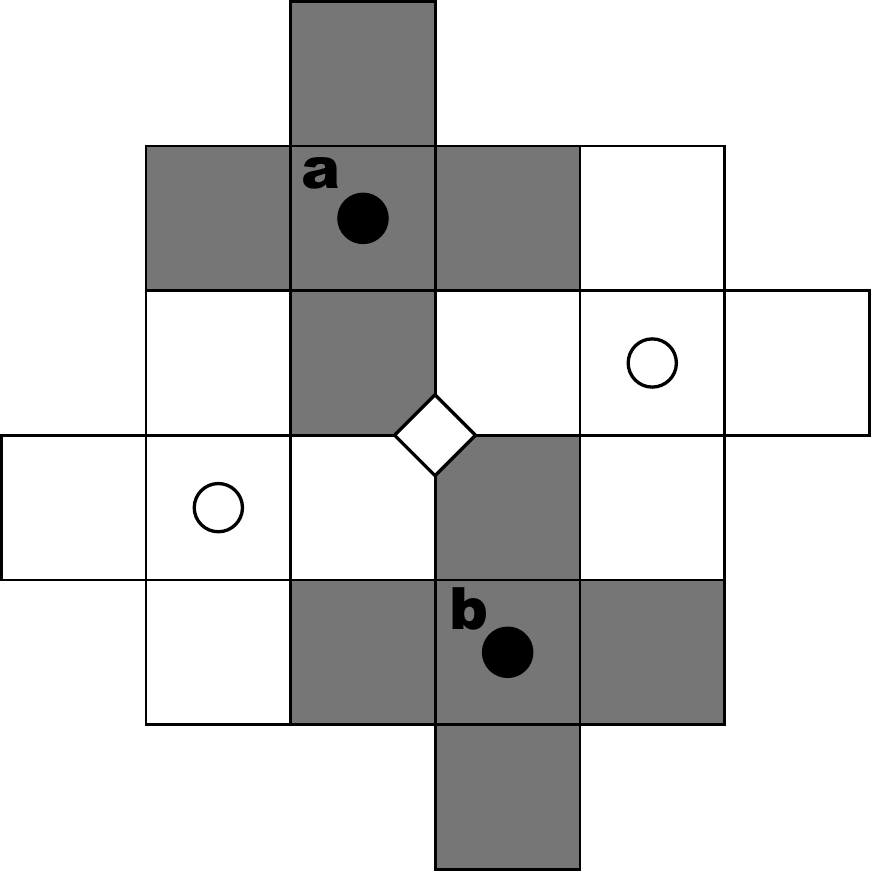}
		\caption{An example of the possible local arrangements of occupied and blocked sites such that a diamond site may be pivotal.}
	\label{fig:diaPiv}
\end{figure}

Recall the definition  that for any octagon site $y$, the
 site $y^\prime$ is the site $y+(1/2,1/2)$, and define similarly 
$y^{\prime\prime}$ as the site $y+(1/2,-1/2)$.

\begin{lemma}
\label{thm:diaPivToEvenPiv} 
For any $\varepsilon \in (0,1)$ there is a constant $c_6=c_6(\varepsilon)>0$,
 such that for any $\lambda \in [\eps, 1/\eps]$,
 and $p \in [0, 1]$,
$n\in\mathbb{N}$, and any even  $y \in \BZ^2$ we have
\begin{equation}
\label{0831a} 
\phi_{\lambda, p, 0,3}(n, y^\prime)
 \leq c_6 \phi_{\lambda, p, 0,3}(n, y),
\end{equation}
and
\begin{equation} 
\phi_{\lambda, p, 0,3}(n, y^{\prime\prime}) \leq c_6
 \phi_{\lambda, p, 0,3}(n, y).
\label{0813b}
\end{equation}
\end{lemma}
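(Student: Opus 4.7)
The plan is a local-surgery (``finite energy'') argument: whenever the diamond site $y'$ is pivotal for $H_{n,3}$, I will show that by resampling the arrival times in a fixed-size neighbourhood of $y$ we produce, with conditional probability bounded below by a positive constant $c^*(\varepsilon)$, a configuration in which the even octagon $y$ is pivotal for $H_{n,3}$. Since the construction only touches arrival times and not the enhancement variables $T_{z'}$, the resulting bound will be uniform in $p\in[0,1]$ as required. Inequality \eqref{0813b} for $y''$ then follows by the identical argument applied in a vertically reflected orientation (exchanging the roles of the top and bottom sides of the rectangle).

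First I would analyse the local pivotality pattern. If $y'$ is pivotal, then among the four octagon neighbours of $y'$, namely $y,\,y+(1,0),\,y+(0,1),\,y+(1,1)$, two diagonally opposite sites must be black (one even-occupied, one odd-blocked) and the other two diagonally opposite sites must be white in the RSA-induced colouring; otherwise flipping the colour of $y'$ could not alter any connectivity. Moreover, outside a small box containing $y'$ there must exist two disjoint black paths from the two black corners to the left and right sides of $R(2n,3)$, and two disjoint white paths from the two white corners to the top and bottom sides. There are only finitely many such diagonal patterns, so by a union bound it suffices to treat each pattern separately.

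Next I would fix a box $C=y+\{-R,\ldots,R\}^2$ with $R$ a small absolute constant and a slightly larger box $C^+$, and condition on the $\sigma$-algebra $\mathcal{F}^-$ generated by $(T_z,T_{z'},U_z)_{z\notin C^+}$ together with the event $\edense(C,R')$ for a suitable $R'\geq R$. On this intersection the local state inside $C$ is determined by the arrival times inside $C^+$, and the external half-paths up to the boundary of $C$ are $\mathcal{F}^-$-measurable. The key step is then an explicit construction of arrival times inside $C^+$ that forces $y$ to be pivotal for $H_{n,3}$: one chooses $T_y$ small so that $y$ is occupied and extends the external black half-paths through $C$ into a horizontal black crossing of $R(2n,3)$, while for each odd neighbour $w$ of $y$ one places $T_w$ in a prescribed interval so that delaying $T_y$ by an independent $\mathrm{Exp}(\lambda)$ variable allows such a neighbour to arrive first, become occupied, and block $y$, thereby destroying the crossing. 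Because the arrival times are independent exponentials with rates lying in $[\varepsilon,1/\varepsilon]$, the prescribed arrival-time configuration has density bounded below by a positive constant $c^*(\varepsilon,R)>0$ under the conditional law.

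The main obstacle is controlling the propagation of the resampled arrival times inside $C^+$ to sites outside $C^+$ via the ``affects'' relation, since such propagation could in principle destroy the external half-paths on which the argument rests. This is handled in parallel to the proofs of Lemmas \ref{5.2} and \ref{Lem5.1}: by intersecting with $\edense(C,R')$ the interior and exterior of $C^+$ are conditionally decoupled, and by Lemma \ref{affectlem} the error introduced by this intersection is a uniformly summable tail that can be absorbed into the constant $c_6$. Combining these ingredients yields \eqref{0831a}, with $c_6$ depending only on $\varepsilon$, and \eqref{0813b} follows by the symmetric vertical-reflection argument.
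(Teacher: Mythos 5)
Your overall strategy is the same as the paper's: a local resampling argument that converts a configuration in which the diamond $y'$ is pivotal into one in which the even octagon $y$ is pivotal, at a conditional probability cost bounded below uniformly in $p\in[0,1]$ and $\lambda\in[\eps,1/\eps]$ (the enhancement variables are untouched), with \eqref{0813b} obtained by reflection. The local pattern analysis and the forcing of $T_y$ small versus the odd neighbours of $y$ arriving in an intermediate window, so that the independent $\mathrm{Exp}(\lambda)$ delay reverses the local outcome, correspond to the paper's events $E_1$ and $E_3$. One imprecision: for $y$ to be pivotal it is not enough that the delay ``blocks $y$, thereby destroying the crossing''; you must show that after the delay the box around $y$ turns white and, joined to the external white half-paths, yields a vertical white crossing of $R(2n,3)$, which is what forces $H_{n,3}$ to fail. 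The needed white half-paths are in your set-up, but this step should be made explicit.

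The genuine gap is in the decoupling step. The event $\edense(C,R')$ controls influence in the wrong direction: it says that no site of $C$ is affected by sites outside the enlarged box, whereas what you need is that resampling the arrival times inside $C^+$ cannot change the occupied/blocked states outside, i.e.\ that no site of $C^+$ affects the exterior. Moreover, even granting such an event, the colours of octagon sites outside $C^+$ --- and hence your external half-path events --- are not measurable with respect to $(T_z,T_{z'},U_z)_{z\notin C^+}$, and conditioning on an event of $\edense$ type does not make interior and exterior independent. The paper resolves exactly this difficulty by a coupling rather than a conditioning: the times inside $y+B(7)$ are resampled outright, the times in the annulus $A_y(7,13)$ are taken to be either the original or the resampled ones according to independent fair coins, and the event $E_2$ requires the coins and resampled values to be consistent with the original occupied/blocked pattern there; on $E_2$ the states of all sites outside $y+B(7)$ provably coincide in the two processes (Lemma 5.1 of \cite{RSAPaper}), which is what preserves the external half-paths. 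You would need to replace your conditioning step by this (or an equivalent) construction for the argument to close.
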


\begin{proof}
Fix an even site $y$, and let $\mathcal{S}_y = (S_x)_{x\in\Lambda}$ be
 the collection of arrival times and enhancement variables in one 
eRSA process. In a similar manner to the proof of Proposition 5.1 of
 \cite{RSAPaper}, we shall construct a coupled process 
$\mathcal{U}_y = (U_x)_{x\in\Lambda}$.
For $n \in \N$,
let $B(2n+1)$ be the collection of octagon and diamond sites within
 $[-n,n] \times [-n,n]$. Let $\mathcal{S}_y$ be as above, let $\mathcal{T}_y = (T_x)_{x\in\Lambda}$ be the set of arrival times and enhancement variables in an independent RSA process, and let $\mathcal{B} = (B_x)_{x\in\Lambda}$ be a collection of 
independent Bernoulli random variables with parameter $0.5$. Then we define
\bean
U_x &= S_x,									 & x \in \BZ^2 \setminus( B(13)+y); \\
		&= T_x,									 & x \in \BZ^2 \cap(  B(7)+y);  \\
		&= B_x T_x + (1-B_x) S_x, & x
\in \BZ^2 \cap ((B(13) \setminus B(7)) +y); 
\\
& = S_x,  & x - (1/2,1/2) \in \BZ^2.
\eean
We also define an independent exponential random variable $T$ with parameter $\lambda$.

We now define three events denoted $E_1$, $E_2$ and $E_3$,
 such that if all three events hold, then the site $y$ is pivotal
 in the $\mathcal{U}_y$ process. Let $E_1$ be the event that the
 diamond site $y^\prime$ is pivotal for the $\mathcal{S}_y$ process. 
For $m < n$, let $A_y(m,n)$ be the square annulus $y+B(n)\setminus B(m)$. 
We shall define $E_2$ to be an event
concerning sites in the annulus $A_y(3,13)$ which ensures that for
the ${\cal U}_y$ process the occupied octagon sites of the
${\cal S}_y$ process therein 
have an earlier arrival time than all of their neighbours
in that annulus, and moreover all occupied sites in $A_y(3,7)$ have
arrival times between $0.5$ and $1$.
 Define $E_2$ as follows:
 \begin{align*}
E_2 = \bigcap&\cap_{\{x\in A_y(7,13)\cap\mathbb{Z}^2 : S_x>1
 \text{ and } x \text{ is occupied in } \mathcal{S}_y\}}\{B_x = 1
 \text{ and } T_x<1\}\\
&\cap_{\{x\in A_y(7,13)\cap\mathbb{Z}^2 : S_x<1 \text{ and } x
 \text{ is blocked in } \mathcal{S}_y\}}\{B_x = 1 \text{ and } T_x>1\}\\
&\cap_{\{x\in A_y(7,13)\cap\mathbb{Z}^2 : S_x<1 \text{ and } x \text{ is occupied in } \mathcal{S}_y\}}\{B_x = 0\}\\
&\cap_{\{x\in A_y(7,13)\cap\mathbb{Z}^2 : S_x>1 \text{ and } x \text{ is blocked in } \mathcal{S}_y\}}\{B_x = 0\}\\
&\cap_{\{x\in A_y(3,7)\cap\mathbb{Z}^2 : x \text{ is occupied in } \mathcal{S}_y\}}\{0.5<T_x<1\}\\
&\cap_{\{x\in A_y(3,7)\cap\mathbb{Z}^2 : x \text{ is blocked in } \mathcal{S}_y\}}\{T_x>1\}.
\end{align*}
We shall define $E_3$ to be an event concerning 
the sites in $y+B(3)$ which ensures  (in conjunction with $E_2$)
that the sites next to $y$ will become occupied if the arrival at
$y$ is delayed but blocked if the arrival at $y$ is not delayed. 
 To be precise, define 
\begin{align*}
E_3:=\{T_y\leq 0.1\} &\cap_{z\in y+B(3): z\,\mathrm{ odd}} \{0.1< T_z\leq 0.2\}\cap 
\\&\cap_{z\in y+B(3): z\,\mathrm{ even}, z \neq y}
 \{0.2< T_z \leq 0.3\} \cap \{T > 0.2\}.
\end{align*}

Consider the state of the $\mathcal{U}_y$ process if all of
 these events occur. If $E_2$ and $E_3$ both hold,
 then every even octagon site within the square $y+B(3)$ is occupied
 if we have the arrival time at $y$ being $T_y$, but blocked if
 we delay the arrival at $y$ by $T$. As noted in Lemma 5.1 of \cite{RSAPaper},
 provided $E_2$ occurs then the states of sites outside $y+B(7)$
 in the $\mathcal{U}_y$ process match the states of those sites in the 
$\mathcal{S}_y$ process. Now we consider any even octagon site within 
$A_y(3,7)$.
 If this site was black in the $\mathcal{S}_y$ process, then in the
 $\mathcal{U}_y$ process it has arrival time less
 than $1$ and any adjacent sites outside $y+B(3)$ have arrival
 times at least $1$, thus are unable to block it. Since all odd
 sites within $y+B(3)$ are blocked by the arrival at $y$, it follows
 that the site under consideration has first arrival time strictly lower 
than all adjacent unblocked sites and hence is occupied.

Suppose $y'$ is pivotal in $\mathcal{S}_y$.
Without loss of generality, we assume that in the $\mathcal{S}_y$ process the local arrangement of occupied sites at $y^\prime$ matches that in figure \ref{fig:diaPiv} and that the site labelled $a$ has a black path connecting it to the left side of the rectangle, and that the site labelled $b$ has a black path connecting it to the right side of the rectangle. By our argument and due to black paths being increasing in black sites, it follows that in the $U_y$ process there is a black path from the left side of the rectangle to $a$, from the site $a$ to the site $b$ due to all the sites in the square $y+B(3)$ being black, and from the site $b$ to the right side of the rectangle. As such, we see that in the $\mathcal{U}_y$ process, if the events $E_1$, $E_2$ and $E_3$ hold and we take $T_y$ as the arrival time at $y$ we have a horizontal black crossing of the rectangle.

A similar argument shows that if we delay the arrival at $y$ by the random variable $T$ and the events $E_1$, $E_2$ and $E_3$ hold then we have a vertical white crossing of the rectangle, and thus 
the site $y$ is pivotal. We then obtain (\ref{0831a})
 by noting that the events $E_1 \cap E_2$ and $E_3$ are independent,
 that the probability of $E_1$ is $\phi_{\lambda, p, 0,3}(n, y^\prime)$, 
and that there is a strictly positive lower bound
both on $\BP_{\lambda,p,0}[E_2|E_1]$ and on $\BP_{\lambda,p,0}[E_3]$,
 uniformly over $0 \leq p \leq 1$ and 
$\eps \leq \lambda \leq 1/\eps$, and over outcomes of the ${\cal S}_y$ 
process in event $E_1$.

A similar argument provides the second inequality
(\ref{0813b}).
\end{proof}


\begin{proof}[Proof of Proposition \ref{cor:halfCorrection}]
 The second inequality 
of (\ref{eq:LambdaComp})
follows immediately from
Lemma \ref{thm:diaPivToEvenPiv}, 
 equation \eqref{eq:LambdaPartDeriv} and 
equation \eqref{eq:MRa}.

The first inequality 
of (\ref{eq:LambdaComp})
is obtained
as in the proof of Proposition 3.1 of \cite{RSAPaper}.
\end{proof}

{\em Remark.} 
The proof of Lemma \ref{thm:diaPivToEvenPiv} 
 (and hence, of the second inequality of 
(\ref{eq:LambdaComp})) is simpler than the
the proof of Proposition 3.1 of \cite{RSAPaper}, required
for the proof of the first inequality of
(\ref{eq:LambdaComp}). This is because in proving Lemma  
\ref{thm:diaPivToEvenPiv} we change a configuration with
a pivotal diamond site so  that a neighbouring octagon site
is pivotal, and can arrange that changing the arrival time
at the octagon site affects the nearby sites in
a manner which helps
to make it pivotal. For the inequality the other way,
we need to change a configuration with a pivotal octagon
site to make a neighbouring diamond site pivotal, which
is more complicated since the diamond site has no
effect on other sites, so we need to change the configuration
of states of nearby octagon sites `by hand' to make the diamond site pivotal.

\section{Proof of Proposition \ref{critequal}}
\label{seckey2}
To prove Proposition \ref{critequal} we shall use
Proposition \ref{SharpNM}, our sharp thresholds result.
Since that result refers to a discrete product space,
 we shall need to discretise
time, and also transfer the model to a torus to
achieve the symmetry needed for applying 
Proposition \ref{SharpNM}.

Given $n \in \N$, let $\mathbb{T}(2n)$ denote the torus formed
 from a $2n$ by $2n$ square of octagon sites and the diamond sites at the upper
 right corner of each octagon site. We shall arbitrarily choose an octagon
 site in the torus to be the origin, and from this we can define even
 and odd sites on $\mathbb{T}(2n)$ and hence have enhanced RSA as before
 on the torus. Where required, we shall denote by
 $P^{\mathbb{T}(2n)}_{\lambda,p}$ and $P^{\Lambda}_{\lambda,p}$ the 
probability measures for enhanced RSA with parameters $\lambda$ and $p$
 on the torus $\mathbb{T}(2n)$ and on the full enhanced integer
 lattice $\Lambda$ respectively.

\begin{lemma}
\label{TorApprox}
Let $n\in\mathbb{N}$, $\lambda>0$ and $p\in(0,1)$, and let $R$
 be a rectangle with long side length at most $2n-4\sqrt{2n}$. Then
 \begin{equation*}
\left|P^{\mathbb{T}(2n)}_{\lambda,p}\left[H(R)\right]-P^\Lambda_{\lambda,p}\left[H(R)\right]\right|<e(n)
\end{equation*}
where $H(R)$ is the event that $R$ has a horizontal black crossing and $e(n)$ is some $o(1)$ function independent of $R$.
\end{lemma}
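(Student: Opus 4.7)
The plan is to build a coupling between the torus and plane models and then use the rapid-decay estimate (\ref{eqaffects}) from Lemma \ref{affectlem} to show that, with high probability, the crossing event depends only on arrival times inside a slightly enlarged rectangle that fits entirely in the torus. On this high-probability event the two crossing events coincide.

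More concretely, I would proceed as follows. First, set $r = r(n) := \lceil 2\sqrt{2n}\,\rceil$ and, given $R$ as in the statement, let $R^+$ denote the rectangle obtained by enlarging $R$ by $r$ in each direction. Since the long side of $R$ is at most $2n - 4\sqrt{2n}$, the side lengths of $R^+$ are at most $2n$, so $R^+$ embeds isometrically into $\mathbb{T}(2n)$. I would identify $\mathbb{T}(2n)$ with the fundamental domain $[-n,n) \times [-n,n) \cap \mathbb{Z}^2$ (together with its diamond sites) in such a way that $R^+$ lies inside this domain, and couple the two models by letting them share the arrival times $T_x$, the uniforms $T_{x'}$, and the delay uniforms at every site in this window, with the torus model using only those variables and the plane model using independent additional variables outside.

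Next, define $E_\mathrm{dense}^\Lambda(R, r)$ and $E_\mathrm{dense}^{\mathbb{T}}(R, r)$ to be, respectively, the event on the plane and on the torus that no site in $R$ is affected (in the sense preceding Lemma \ref{affectlem}, extended to the torus via self-avoiding torus paths) by any site outside $R^+$. By the first assertion of Lemma \ref{affectlem} (and its straightforward torus analogue, proved identically), on the intersection of these two events the occupied/blocked status of every site in $R$ is determined by the arrival times inside $R^+$; since the coupling makes these arrival times and the enhancement variables identical in both models, the statuses of all octagon and diamond sites in $R$ agree, and hence the event $H(R)$ occurs in one model iff it occurs in the other.

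It remains to bound the probability of the bad event. Using the union bound and (\ref{eqaffects}), for either model
\begin{equation*}
\BP\bigl[E_\mathrm{dense}(R,r)^c\bigr] \;\leq\; \sum_{x \in R}\,\sum_{y : d(x,y) > r} \BP[\,y \text{ affects } x\,]
\;\leq\; |R|\sum_{d > r} \frac{(4d)\,4^d}{\lfloor d/2 \rfloor!},
\end{equation*}
where $|R| \leq (2n)^2$ and $r = \lceil 2\sqrt{2n}\,\rceil$. Because the factorial dominates, the tail sum is superexponentially small in $r$, so the whole bound is $o(1)$ as $n \to \infty$, uniformly over all rectangles $R$ with long side at most $2n - 4\sqrt{2n}$. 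Setting $e(n)$ equal to twice this bound and combining with the coupling argument yields the lemma.

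The main obstacle I anticipate is a careful verification that the torus version of Lemma \ref{affectlem} holds, i.e.\ that if no torus-site outside $R^+$ affects a given site in $R$ via a torus self-avoiding path then the status of that site is determined by arrival times in $R^+$; however, since $R^+$ embeds isometrically in $\mathbb{T}(2n)$, the generation-based induction of Lemma \ref{affectlem} carries over verbatim with $\mathbb{Z}^2$ replaced by $\mathbb{T}(2n)$, and the bound (\ref{eqaffects}) remains valid. The uniformity of $e(n)$ in $R$ is immediate from the $|R| \leq (2n)^2$ bound, which is crucial because the statement asks for a single $o(1)$ function independent of $R$.
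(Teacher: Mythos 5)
Your proposal is correct and follows essentially the same route as the paper: couple the torus and plane processes so that arrival times and diamond colours agree on a fundamental domain containing the enlarged rectangle, then use the ``affects'' relation of Lemma \ref{affectlem} together with the tail bound (\ref{eqaffects}) (i.e.\ the content of Lemma \ref{3.3}) to show that with probability $1-o(1)$, uniformly in $R$, every site of $R$ has the same status in both models. The only difference is that you spell out the union-bound computation that the paper delegates to Lemma \ref{3.3}.
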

\begin{proof}
We can couple enhanced RSA on $\Lambda$ and on $\mathbb{T}(2n)$ such that
 the arrival times at integer sites and colours of diamond sites
 agree on $\{(a,b):0\leq a,b\leq2n\}$. By Lemmas \ref{affectlem}
and \ref{3.3} the
 probability that there is a site within a rectangle contained within
 $\{(a,b):2\left\lceil\sqrt{2n}\right\rceil
 \leq a,b\leq2n-2\left\lceil\sqrt{2n}\right\rceil\}$ 
whose colour disagrees with the colour of the associated site in $\Lambda$ tends to $0$ as $n \to \infty$, and so the result follows.
\end{proof}

Given $n \in \N$,
define the torus $\mathbb{T}=\mathbb{T}(20n)$. 
Let $Q_n$ be the $20n$ by $20n$ square region in $\Lambda$ which is identified
 with $\mathbb{T}$.
Also let $Q_n^e$ be the set of even sites in $Q_n$ and
let $\mathbb{T}^e$ be the set of even sites in $\mathbb{T}$. 
Set $\delta := \delta(n) :=(\log n)^{-1/2}$.

Given $(\lambda_0,\tilde{p},\lambda_1)\in\mathbb{R}_+\times
(0,1) \times \mathbb{R}_+ $,
let $\BP_{\lambda_0,\tilde{p},\lambda_1}^{\mathbb{T}}$ be
 the probability measure 
associated with the enhanced RSA model on the torus $\mathbb{T}$ with
 arrivals rate $\lambda_0$ 
at even sites and $\lambda_1$ at odd sites,
and diamond sites black with probability $\tilde{p}$. 
(When $\lambda_1=1$, we sometimes omit the 
 third subscript $\lambda_1$  
from the notation.)
We now construct a discrete-time version of this process.
At each site $x\in\mathbb{Z}^2$ we shall divide the time-axis into blocks
 of length $\delta$, and discarding all blocks that had their start time 
later than $n$ we have a product space 
$\tT := \mathbb{T} \times \{-1,0,1,2,\ldots,\lfloor n/\delta\rfloor\}$ 
where $(x,-1)$ represents the diamond site $x^\prime:=x+(1/2,1/2)$, and
 $(x,k)$ for $k\in\{0,1,\ldots,\lfloor n/\delta\rfloor\}$ represents the site
 $x$ at times in the interval $I_\delta(k) := [k\delta,(k+1)\delta)$. We 
denote the probability measure on this new space by 
$\BP_{\lambda_0,\tilde{p},\lambda_1}^{\tT}$.

We shall now construct a random field
 $\ubx=(X(x,k):(x,k)\in\tT )$ 
with each $X(x,k)$ taking values in
 $\{0,1,2,3\}$. For an even site $x$ and for $k \geq 0$, we
 set $X(x,k)=3$ if there is an attempted arrival at $x$ within
 $I_\delta(k)$ and $X(x,k)\in\{0,1,2\}$ if not.
 For an odd site $x$ and for $k\geq 0$, we set
 $X(x,k)=0$ if there is an attempted arrival at $x$ within $I_\delta(k)$ and
 $X(x,k)\in\{1,2,3\}$ if not.
For 
any site $x$, we put $X(x,-1)\in\{2,3\}$ if $x^\prime$ is black, and
 $X(x,-1)\in\{0,1\}$ if $x^\prime$ is white. To construct a representation of 
this model in a discrete product space we consider all arrivals at a site 
instead of solely the first, so that $X(x,k_1)$ is independent of 
$X(x,k_2)$ whenever $k_1\neq k_2$. Where there is a choice of
 the value of $X(z)$ for
 $z\in\tT$, 
we choose randomly and 
independently of $X(z^\prime)$ for all $z^\prime\neq z$ so that the
 distribution of ${\bf X}$, denoted
$\BP_{\lambda_0,\tilde{p},\lambda_1}^{\tT}$, 
satisfies

\begin{align}
\BP_{\lambda_0,\tilde{p},\lambda_1}^{\tT} 
(X(z)=3)
 &= 1-e^{-\lambda_0\delta};
\label{discreteProbBlock}
\\
\BP_{\lambda_0,\tilde{p},\lambda_1}^{\tT} (X(z)=2) 
&= \tilde{p}+e^{-\lambda_0\delta}-1;\notag\\
\BP_{\lambda_0,\tilde{p},\lambda_1}^{\tT} (X(z)=1) 
&= e^{-\lambda_1\delta}-\tilde{p};\notag\\
\BP_{\lambda_0,\tilde{p},\lambda_1}^{\tT} (X(z)=0) 
&= 1-e^{-\lambda_1\delta}.\notag
\end{align}
Since we assume $\tilde{p} \in (0,1)$, for large enough $n$ these really are
probabilities.


 Let $\efast(\BT^e)$ be the event that
 for all $z\in \BT^e $
 the first arrival time at $z$ is less than $\sqrt{n}$,
and let
  $\efast(Q_n^e)$ be defined similarly.

Let $E_n$ be the event that there is some $18n$ by $2n$ rectangle in
 $\mathbb{T}$ with a horizontal black crossing after the arrival times at 
all even sites are delayed by $2\delta$ and that also $\efast(\mathbb{T}^e)$
 occurs.
Let $E_n^\text{crude}$ be the event that the state
 of $\ubx:=\{X(x,k):(x,k)\in\tT\}$ 
is such that $E_n$ is possible given $\ubx$; this can be seen as either
 an event on the discrete time torus
$\tT$,
 or as an event on the continuous time torus representing that
 the state of $\ubx$ consistent with the arrival times satisfies the 
understanding of $E_n^\text{crude}$ above.

\begin{lemma}
\label{Lem0613}
Let $\lambda >0$, $ p \in (0,1)$ and $\varepsilon \in (0, 1-p)$. 
  Suppose for some 
$\rho >0$
 that
 $\limsup_{n \to \infty} h_{\rho} (n,\lambda,p) >0$.   
Then
\begin{equation}
\limsup_{n \to \infty}
 \BP_{\lambda,p+\varepsilon/2,1}^{\tT} (E_n^{\text{\rm crude}}) >0. 
\label{discMin}
\end{equation}
\end{lemma}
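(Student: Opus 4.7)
The plan is to bootstrap from the hypothesis on $\Lambda$ to the discretized torus estimate through four reductions: apply RSW to obtain crossings of long rectangles on $\Lambda$; compensate the time-discretization delay $2\delta(n)$ by a small boost in $p$; transfer the resulting estimate to the torus $\BT(20n)$ and intersect with $\efast(\BT^e)$; and finally pass from the continuous-time event $E_n$ to its crude discrete counterpart $E_n^{\text{crude}}$.

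By Lemma \ref{LimResBetter} applied to the hypothesis, along some subsequence $n_k\to\infty$ we have $h_9(n_k,\lambda,p)\geq c>0$. The essential-enhancement step is to compensate for the delay $2\delta(n_k)$ using Corollary \ref{cor:CompensateEqual}. Define $g(t):=h_9(n_k,\lambda,p+c_5 t,t)$; by the chain rule, together with the bound $|\partial h_9/\partial\delta|\leq c_5\,\partial h_9/\partial p$ and the monotonicity $\partial h_9/\partial p\geq 0$, we have $g'(t)\geq 0$, so $g(2\delta(n_k))\geq g(0)\geq c$. Since $2c_5\delta(n_k)=2c_5/\sqrt{\log n_k}\to 0$, for all sufficiently large $k$ we have $2c_5\delta(n_k)<\eps/4$, whence by monotonicity in $p$ we also get $h_9(n_k,\lambda,p+\eps/4,2\delta(n_k))\geq c$.

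A straightforward adaptation of Lemma \ref{TorApprox} to the delayed process (its coupling and $\edense$ argument go through unchanged once the arrival times are replaced by their delayed counterparts) then shows that the probability of a horizontal black crossing of some fixed $18n_k\times 2n_k$ rectangle $R\subset\BT(20n_k)$ in the delayed process on the torus is at least $c/2$ for large $k$. A direct union bound over the $O(n^2)$ even sites in $\BT(20n)$, each having exponential first-arrival time of rate $\lambda$, gives $\BP[\efast(\BT^e)^c]\leq O(n^2)e^{-\lambda\sqrt{n}}\to 0$; combining yields $\BP^{\BT(20n_k)}_{\lambda,p+\eps/4,1}(E_{n_k})\geq c/3$ for all sufficiently large $k$.

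Finally, construct $\ubx$ from the continuous torus process by recording, for each $(x,k)\in\tT$ with $k\geq 0$, whether there is an arrival in block $k$ (together with an independent uniform resolving the remaining ambiguity in the value of $X(x,k)$) and recording the colour of $x^\prime$ for $k=-1$; the resulting law of $\ubx$ is precisely $\BP^{\tT}_{\lambda,p+\eps/4,1}$. If $E_n$ holds for the underlying continuous process then that process witnesses that $E_n$ is possible given $\ubx$, so $E_n^{\text{crude}}$ holds; hence $\BP^{\tT}_{\lambda,p+\eps/4,1}(E_{n_k}^{\text{crude}})\geq c/3$. Since $E_n^{\text{crude}}$ is increasing in the blackness of the diamond coordinates $X(x,-1)$, monotonicity in $p$ upgrades this to $\BP^{\tT}_{\lambda,p+\eps/2,1}(E_{n_k}^{\text{crude}})\geq c/3$ along the subsequence, giving the desired conclusion. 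The main technical obstacle is the delay-compensation step, which crucially relies on Corollary \ref{cor:CompensateEqual}; the remaining reductions are routine given the earlier machinery.
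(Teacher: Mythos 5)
Your proposal is correct and follows essentially the same route as the paper's proof: RSW to get $h_9$ bounded away from zero, delay compensation via Corollary \ref{cor:CompensateEqual} (your monotone-path/chain-rule argument is the paper's ``Mean Value Theorem'' step in slightly different clothing), transfer to the torus via the $\edense$ coupling, a union bound for $\efast$, and the observation that $E_n$ implies $E_n^{\text{crude}}$. The only cosmetic differences are that you pass through $p+\eps/4$ and invoke monotonicity in $p$ at the end rather than compensating directly to $p+\eps/2$, and you handle $\efast(\BT^e)$ by a separate union bound rather than folding it into the coupling.
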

\begin{proof}
Let $R_n := [1,18n] \times [1,2n]$, considered as a rectangle in 
 the torus $\mathbb{T}$. 
 Let $D_n$ be the event that $R_n$ has a horizontal black
 crossing after the arrival times at all even sites are delayed by 
$2\delta$.
By the union bound and the exponential decay of
the tail of the exponential distribution, we have
$
\lim_{n \to \infty}
 \BP^{\Lambda}_{\lambda,p+\eps/2}(\efast(Q_n^e))=1.
$
Hence using Lemma \ref{3.3},
  letting $\edense^n$ be the event 
$\edense(R_n,2\left\lceil\sqrt{2n}\,\right\rceil)$ 
 we have
\begin{align*}
\BP^\mathbb{T}_{\lambda,p +\varepsilon/2}\left[
D_n\cap \efast(\mathbb{T}^e)\right]
 &\geq \BP^\Lambda_{\lambda,p +\varepsilon/2}\left[
D_n \cap \edense^n \cap \efast(Q_n^e)\right] 
\\
&= \BP^\Lambda_{\lambda,p +\varepsilon/2}(D_n)+o(1).
\end{align*}
Then using Corollary \ref{cor:CompensateEqual} and the Mean Value Theorem, 
followed by Lemma \ref{LimResBetter},
 we obtain
\begin{align}
\limsup_{n \to \infty} \BP^\mathbb{T}_{\lambda,p +\varepsilon/2}
\left[D_n\cap \efast(\mathbb{T}^e)
\right]&\geq \limsup_{n \to \infty} \BP^\Lambda_{\lambda,p}(H_{n,9})
\notag\\
%
& >0.
\label{E1Ineq}
\end{align}

Clearly
 $\BP_{\lambda,p + \eps/2}^\mathbb{T}(E_n)\geq 
\BP_{\lambda,p + \eps/2}^\mathbb{T}[D_n
 \cap E_{\rm fast}^n(\mathbb{T}^e) ]$.
Since for any $(\lambda_0,\tilde{p},\lambda_1)$ we have 
$$
\BP_{\lambda_0,\tilde{p},\lambda_1}^{\tT}
(E_n^{\text{crude}})\geq
 \mathbb{E}\left[\BP_{\lambda_0,\tilde{p},\lambda_1}^{\mathbb{T}}
(E_n | {\bf X})\right]=
\BP_{\lambda_0,\tilde{p},\lambda_1}^{\mathbb{T}}(E_n),
$$
 from \eqref{E1Ineq} we have 
(\ref{discMin}).
\end{proof}

We now we use our sharp thresholds result to
show that after a slight adjustment of parameters,
the probability of the discrete event 
$E_n^{\text{crude}}$ is infinitely often close to 1 rather than just
being bounded away from zero as in (\ref{discMin}).

\begin{lemma}
\label{Lem0615}
Under the assumptions of Lemma \ref{Lem0613},
 \begin{equation}
\label{0615a}
\limsup_{n \to \infty}
 \BP_{(1+\eps)^{1/2}\lambda,p+\varepsilon/2,(1+\eps)^{-1/2}}^{\tT} 
(E_n^{\text{\rm crude}})  =1.
\end{equation}
\end{lemma}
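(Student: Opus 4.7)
The plan is to apply Proposition~\ref{SharpNM} to the event $E_n^{\mathrm{crude}}$ viewed as a subset of the product space $\{0,1,2,3\}^{\tT}$. Under $\BP^{\tT}_{\lambda_0,\tilde p,\lambda_1}$ the coordinates $X(z)$, $z\in\tT$, are independent with common marginal given by \eqref{discreteProbBlock} and the three displayed lines following it, and $E_n^{\mathrm{crude}}$ is increasing in the pointwise order on $\{0,1,2,3\}^{\tT}$: raising $X(z)$ at an octagon site corresponds to introducing an even arrival or removing an odd arrival in the relevant interval, and raising it at a diamond site moves the diamond from white to black, each of which favours the existence of a horizontal black crossing. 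Moreover $E_n^{\mathrm{crude}}$ is invariant under translations of $\mathbb{T}$ by $(2\mathbb{Z})^2$, which give symmetry of order $m:=100\,n^2$.

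The delicate point is the choice of starting vector so that $\bq$, the marginal corresponding to the target parameters $((1+\eps)^{1/2}\lambda,\,p+\eps/2,\,(1+\eps)^{-1/2})$, dominates $\bp+(-\gamma,0,0,\gamma)$ for some $\gamma>0$. A direct comparison of cumulative probabilities of $\bq$ with the marginal at $(\lambda,p+\eps/2,1)$ gives gaps of order $\delta$ at levels $j=0$ and $j=2$, but $P_1-Q_1=0$, because $\tilde p$ is the same in both vectors and the contributions at level $1$ cancel. To introduce the missing slack I take $\bp_n$ to be the marginal at $(\lambda,\,p+\eps/2-\gamma_n,\,1)$ with $\gamma_n:=c_0\delta$ and $c_0=c_0(\eps,\lambda)>0$ chosen small enough that for all large $n$ the gaps at $j=0$ and $j=2$ still exceed $\gamma_n$; by construction $P_1-Q_1=\gamma_n$ as well, so $\bq$ dominates $\bp_n+(-\gamma_n,0,0,\gamma_n)$.

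For the growth condition \eqref{SharpBound}, since $\delta=(\log n)^{-1/2}$ and $m$ is of order $n^2$, $\gamma_n\log m$ is of order $\sqrt{\log n}$, while $\qmax$ stays bounded away from $0$ and $1$ because $\tilde p$ lies in a compact subinterval of $(0,1)$. Set $\eta_n:=\exp(-\sqrt{\log n}/C)$ with $C$ sufficiently large that \eqref{SharpBound} holds for every large~$n$; then $\eta_n\to 0$. To produce the required input $\BP^{\tT}_{\bp_n}(E_n^{\mathrm{crude}})>\eta_n$ along some subsequence, apply Lemma~\ref{Lem0613} with $\eps/2$ in place of $\eps$: this yields $\eta^*:=\limsup_n \BP^{\tT}_{\lambda,\,p+\eps/4,\,1}(E_n^{\mathrm{crude}})>0$, and along a subsequence $(n_k)$ these probabilities exceed $\eta^*/2$. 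Since $\gamma_{n_k}<\eps/4$ for $k$ large, monotonicity of $E_n^{\mathrm{crude}}$ in $\tilde p$ gives $\BP^{\tT}_{\bp_{n_k}}(E_{n_k}^{\mathrm{crude}})\geq \BP^{\tT}_{\lambda,\,p+\eps/4,\,1}(E_{n_k}^{\mathrm{crude}})>\eta^*/2>\eta_{n_k}$ eventually, and Proposition~\ref{SharpNM} then produces $\BP^{\tT}_{\bq}(E_{n_k}^{\mathrm{crude}})>1-\eta_{n_k}\to 1$ along the subsequence, which together with the trivial upper bound $1$ furnishes \eqref{0615a}.

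The main obstacle is the middle-level equality $P_1=Q_1$: the natural starting vector $(\lambda,p+\eps/2,1)$ does not leave the CDF gap required by Proposition~\ref{SharpNM} at every intermediate level, so one must borrow $\gamma_n$-slack on $\tilde p$ by invoking Lemma~\ref{Lem0613} with a slightly smaller $\eps$. The slowly vanishing choice of $\eta_n$ is equally essential: a fixed $\eta>0$ would give only $\limsup\geq 1-\eta$, whereas letting $\eta_n$ decay just slowly enough to remain compatible with \eqref{SharpBound} lifts the conclusion to $\limsup=1$.
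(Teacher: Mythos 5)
Your proposal is correct and follows the same overall strategy as the paper's proof: identify $E_n^{\mathrm{crude}}$ with an increasing, translation-symmetric event on a finite product space $\{0,1,2,3\}^N$, feed in Lemma \ref{Lem0613} for the initial lower bound, and apply Proposition \ref{SharpNM} with $\gamma$ of order $\delta=(\log n)^{-1/2}$, so that $\gamma\log m \asymp \sqrt{\log n}$ eventually beats the right-hand side of (\ref{SharpBound}). The one substantive divergence is in the domination step, and it works in your favour. The paper creates the missing slack at the middle level of the CDF comparison by raising the target enhancement parameter from $p+\eps/2$ to $p+\eps$; its proof therefore actually establishes the conclusion for $\BP^{\tT}_{(1+\eps)^{1/2}\lambda,\,p+\eps,\,(1+\eps)^{-1/2}}$ rather than for $p+\eps/2$ as the lemma is stated (the downstream use in Lemma \ref{highProbCrossing} is indeed with $p+\eps$, so the statement appears to carry a typo). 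You instead keep the target at $p+\eps/2$ and lower the source to $p+\eps/2-\gamma_n$ with $\gamma_n=c_0\delta$, correctly observing that the cumulative gap at level $1$ then equals $\gamma_n$ exactly while the gaps at levels $0$ and $2$ are of order $\delta$ with constants depending only on $\eps$ and $\lambda$; the required input probability is then recovered from Lemma \ref{Lem0613} applied with $\eps/2$ in place of $\eps$ together with monotonicity of the increasing event in $\tilde p$. This proves the lemma as literally written, which by monotonicity implies (and is needed for) the $p+\eps$ version. Your explicit choice $\eta_n=\exp(-\sqrt{\log n}/C)$ is also a cleaner way of extracting $\limsup=1$ than the paper's fixed-$\eta$ argument, which leaves the final passage $\eta\downarrow 0$ implicit. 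Only trivial side conditions of Proposition \ref{SharpNM} are left unchecked ($p_0\ge\gamma_n$, $p_3\le 1-\gamma_n$, and positivity of all entries of $\bp_n$ for large $n$), all of which are immediate from (\ref{discreteProbBlock}).
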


\begin{proof}
Set $ N := N(n) := 400n^2(2+\left\lfloor\frac{n}{\delta}\right\rfloor)$. 
Given a probability vector 
$\bp' = (p^\prime_0,p^\prime_1,p^\prime_2,p^\prime_3)$,
 we define the probability measure
 $\BP^n_{\bp'} $
on  the space 
$\{0,1,2,3\}^N$
 as in Section \ref{secthresh}.
We can now think of $E_n^{\text{crude}}$ as being an event
 $E_n^{\text{disc}}$ in
$\{0,1,2,3\}^N$,
 by enumerating the $(x,k)$ pairs as $z_1,z_2,\ldots,z_N$ 
and identifying the value of ${\bf X}$ with an element of
$\{0,1,2,3\}^N$.
Given $\lambda_0,\tilde{p} $ and $\lambda_1$, the
distribution of ${\bf X}$ under this identification is given by
 $\BP^N_{\bp} $
with the entries of $\bp$  given by 
 (\ref{discreteProbBlock}).

The event $E_n$ is symmetric under the group of permutations of sites
 by translations of the torus (modulo $20n$) that send even sites to even
 sites, and therefore  so too are $E_n^{\text{crude}}$ and 
 $E_n^{\text{disc}}$. This group of permutations has order
$200n^2$.

We claim that $E_n^{\text{disc}}$ is increasing in $\ubx$. Indeed,
suppose $z=(x,k)\in\tT$.  If 
$k= -1$ then $z$ corresponds to the diamond site $x'$, and
 an increase in $X(z)$
 corresponds either
 to leaving $x'$ unchanged, or
 to changing $x'$ from being white  to being black.
If $k \geq 0$ and $x$ is an odd site,
 an increase in $X(z)$ from $0$ corresponds to removing any arrivals
at $x$ in the time period  $I_\delta(k)$ and otherwise leaving things
 unchanged. If $k \geq 0$ and $x$ is an even site, an increase in $X(z)$
 corresponds to either leaving things unchanged, or 
adding an arrival at $x$ in the time period
$I_\delta(k)$. Thus regardless of the nature 
of a site $z$, $E_n^{\text{disc}}$ is increasing in $X(z)$.

In order to apply Proposition \ref{SharpNM}, we compare two models,
i.e. two probability vectors $(p_0,p_1,p_2,p_3)$
and $(q_0,q_1,q_2,q_3)$,
 where $p_i$ is the probability that $X(z)=i$ in the first model,
 and $q_i$ is the probability that $X(z)=i$ in the second model.
 Our first model has parameters $\lambda_0= \lambda$, $\lambda_1=1$, and 
$\tilde{p}=p +\varepsilon/2$, while our second model has parameters
 $\lambda_0 = (1+\varepsilon)^{1/2} \lambda $, 
$\lambda_1=(1+\varepsilon)^{-1/2}$ and $\tilde{p}=p+\varepsilon$. Then
 using \eqref{discreteProbBlock} 
we have 
\begin{align*}
p_3&= 1-e^{-\lambda \delta}, & q_3&= 1-e^{-(1+\varepsilon)^{1/2}\lambda 
\delta};
\\
p_2&= e^{-\lambda \delta}+\varepsilon/2 + p -1 , & 
q_2&= e^{-(1+\varepsilon)^{1/2} \lambda \delta}+\varepsilon + p -1;\\
p_1&= e^{-\delta}-\varepsilon/2- p , &
 q_1&= e^{-(1+\varepsilon)^{-1/2}\delta}-\varepsilon- p;\\
p_0&= 1-e^{-\delta}, & q_0&= 1-e^{-(1+\varepsilon)^{-1/2}\delta}.
\end{align*}
From the equivalence of 
$E_n^{\text{disc}}$ and $E_n^{\text{crude}}$,
and Lemma \ref{Lem0613}, we have
$$
\limsup_{n \to \infty}
 \BP_{p_0,p_1,p_2,p_3}^N(E_n^{\text{disc}}) 
= \limsup_{n \to \infty}
\BP^{\tT}_{\lambda,p+\varepsilon/2,1}
(E_n^{\text{crude}}) > 0.
$$

We shall now apply Proposition \ref{SharpNM}. Note that 
\begin{align*}
q_3-p_3 &= e^{-\lambda \delta}-e^{-(1+\varepsilon)^{1/2}\lambda \delta}
 \sim
 \delta((1+\varepsilon)^{1/2}-1) \lambda;
\\
p_1-q_1 &= \varepsilon/2+e^{-\delta}-e^{-(1+\varepsilon)^{-1/2}\delta}
 \to \varepsilon/2 ; 
\\
p_0-q_0 &= e^{-(1+\varepsilon)^{-1/2}\delta}-e^{-\delta} \sim
 \delta(1-(1+\varepsilon)^{-1/2})
\\
&= \delta\frac{(1+\varepsilon)^{1/2}-1}{(1+\varepsilon)^{1/2}} . 
\notag
\end{align*}
 Set $\gamma = \min(p_0 - q_0 , q_3 -p_3)$.
For sufficiently high $n$, we obtain that 
$p_0 > q_0$, $p_1 > q_1$ and $q_3 > p_3$.
Hence $\gamma >0$ and $(q_0,q_1,q_2, q_3)$ dominates
$(p_0 - \gamma,p_1,p_2,p_3+\gamma)$.
We shall apply Proposition \ref{SharpNM} with $k=3$.
In the terminology of that result, we have $\qmax =\min(p_2,p_1)$.
Fix
\begin{equation*}
\eta \in 
(0, \limsup_{n \to \infty} \BP_{p_0,p_1,p_2,p_3}^N(E_n^{\text{disc}})). 
\end{equation*}
Since $p \log(4/p)$ takes maximum value $\log(4)<2$,
 the right hand side of \eqref{SharpBound} is at most 
$ 4000 \log (1/\eta) $.
Since $\delta = (\log n)^{-1/2}$,
 for $n$ large enough we have
$\gamma \log (200 n^2) > 4000 \log (1/\eta)$. 
 Thus  Proposition \ref{SharpNM} is applicable; by that result,
and the equivalence of 
$E_n^{\text{disc}}$ and $E_n^{\text{crude}}$,
 for infinitely many $n$ we have
 \begin{equation*}
\BP^{\tT}_{(1+\varepsilon)^{1/2} \lambda,
p +\varepsilon,(1+\varepsilon)^{-1/2}}
(E_n^{\text{crude}}) =
\BP^N_{q_0,q_1,q_2,q_3}(E_n^{\text{disc}})
>1-\eta,
\end{equation*}
and (\ref{0615a}) follows.
\end{proof}

\begin{lemma}
\label{highProbCrossing}
Let $\lambda >0$, $ p \in (0,1)$ and
 $\varepsilon \in (0, 1-p)$.
  Suppose for some $\rho >0$  that
 $\limsup_{n \to \infty} h_{\rho} (n,\lambda,p) >0$.   
Then 
 \begin{equation}
\label{mainResLimSup}
\limsup_{n\to\infty} h_3
(n,\lambda (1 + \varepsilon),p +\varepsilon) =1.
\end{equation}
\end{lemma}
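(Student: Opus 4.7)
The plan is to deduce \eqref{mainResLimSup} from Lemma \ref{Lem0615} by a chain of approximations: first I convert the crude discrete crossing event into an actual continuous-time crossing, then transfer the crossing from the torus to the infinite lattice, then rescale time to restore the odd-site rate to $1$, and finally absorb the residual even-site delay into the enhancement parameter $p$.

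By Lemma \ref{Lem0615}, along a subsequence $n_k\to\infty$ one has $\BP^{\tT}_{(1+\eps)^{1/2}\lambda,\,p+\eps/2,\,(1+\eps)^{-1/2}}(E_{n_k}^{\text{crude}})\to 1$. My first step is to pass from the discrete crude event to the actual continuous-time event $E_{n_k}$. Because the definition of $E_n^{\text{crude}}$ asks only that $E_n$ be possible for some filling of the exact arrival times within the $\delta$-blocks, we have $E_n^{\text{crude}}\supseteq E_n$, and the two events can differ only when some neighbouring odd/even pair has first-arrival intervals $k_x,k_y$ satisfying $k_x=k_y+2$; this is exactly the situation in which the $2\delta$ delay lines their arrivals up into a common $\delta$-block so that the exact within-block timings decide the local order. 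I would bound $\BP(E_n^{\text{crude}}\setminus E_n)$ by restricting attention to arrivals in $[0,\sqrt n]$ via $\efast(\BT^e)$, and then absorb the remaining loss via an essential-enhancement-type argument in the spirit of Corollary \ref{cor:CompensateEqual}, so that increasing $p$ by a vanishing amount $o(1)$ compensates for unfavourably resolved ambiguous pairs.

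Next, Lemma \ref{TorApprox} transfers the $18n\times 2n$ rectangle crossing from the torus $\BT(20n)$ to the lattice $\Lambda$ at $o(1)$ cost, since the rectangle lies strictly inside the non-wraparound region for all large $n$. Because $R(2n,3)\subset R(2n,9)$ and the two rectangles share the vertical extent $[-n,n]$, any horizontal crossing of $R(2n,9)$ restricts to a horizontal crossing of $R(2n,3)$, so I obtain $\BP^{\Lambda}_{(1+\eps)^{1/2}\lambda,\,p+\eps/2+o(1),\,2\delta}(H_{n_k,3})\to 1$. Then I rescale time by the factor $\alpha=(1+\eps)^{-1/2}$: the jamming distribution is invariant under time scaling, so the crossing event is preserved, while the even-site rate becomes $\lambda(1+\eps)$, the odd-site rate becomes $1$, and the delay shrinks to $2\delta(1+\eps)^{-1/2}=o(1)$.

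Finally, to remove this residual delay I would integrate the inequality $|\partial h_3/\partial\delta|\le c_5\,\partial h_3/\partial p$ of Corollary \ref{cor:CompensateEqual} over $\delta'\in[0,2\delta(1+\eps)^{-1/2}]$, which shows that a further $p$-increment of $2c_5\delta(1+\eps)^{-1/2}=o(1)$ compensates for setting the delay to zero. Adding the two $o(1)$ increments, the cumulative adjustment beyond $p+\eps/2$ is still $o(1)<\eps/2$ for all large $n_k$, so by monotonicity of $h_3$ in $p$ I obtain $h_3(n_k,\lambda(1+\eps),p+\eps)\to 1$, yielding \eqref{mainResLimSup}. The main obstacle is the first step, where only $\BP(E_n^{\text{crude}})\ge\BP(E_n)$ is immediate, yet the reverse direction (up to a vanishing parameter shift) is what is needed; the essential-enhancement machinery absorbs the discretisation error, but making this precise requires combining the localisation provided by $\efast$ with the rapid decay of affects from Lemma \ref{affectlem} to bound the number of ambiguous pivotal pairs that can upset the crossing.
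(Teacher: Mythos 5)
Your overall architecture (discrete event $\to$ continuous crossing $\to$ torus-to-lattice via Lemma \ref{TorApprox} $\to$ time rescaling) matches the paper, but the step you yourself flag as ``the main obstacle'' is where the proposal genuinely breaks down, and the paper resolves it by a completely different mechanism. You try to go from $E_n^{\text{crude}}$ back to the \emph{delayed} event $E_n$ by bounding $\BP(E_n^{\text{crude}}\setminus E_n)$ and then invoking an essential-enhancement compensation, and afterwards to strip off the delay via Corollary \ref{cor:CompensateEqual}. Neither half of this works as stated: your characterisation of when the two events differ (``only when $k_x=k_y+2$'') is asserted, not proved, and $E_n^{\text{crude}}$ is a statement that $E_n$ is \emph{possible} for some filling of the blocks, so its excess over $E_n$ is not a local ``ambiguous pair'' event; moreover Corollary \ref{cor:CompensateEqual} controls $\partial h_\rho/\partial\delta$ for the continuous model and gives no handle on the gap between a possibility event on the discretised field and the realised event. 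The paper's route is a \emph{deterministic inclusion in the other direction}: $E_n^{\text{crude}}\subset F_n$, where $F_n$ is the \emph{undelayed} crossing event. The $2\delta$ delay was built into the definition of $E_n$ precisely for this purpose: if some realisation $Z_1$ consistent with the discrete data has a crossing after delaying even arrivals by $2\delta$, then any other consistent realisation $Z_2$ has even first arrivals at most $\delta$ later and odd first arrivals at most $\delta$ earlier than $Z_1$, so every arrival-order comparison that made a site black under the delayed $Z_1$ persists under the undelayed $Z_2$. This removes both the discretisation error and the delay in one stroke, with no probabilistic loss and no further parameter adjustment.

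A second gap: $E_n$ and $F_n$ require a crossing of \emph{some} $18n\times 2n$ rectangle anywhere in the torus (this non-fixed location is forced by the translation symmetry needed for Proposition \ref{SharpNM}), so knowing $\BP(F_n)$ is near $1$ does not directly give a high crossing probability for a fixed rectangle. You treat the rectangle as fixed and pass straight to $R(2n,3)\subset R(2n,9)$. The paper needs the additional square-root trick: cover the torus by $40$ rectangles of size $12n\times 4n$ so that $F_n$ forces a horizontal crossing of one of them, apply Harris--FKG to the white-increasing complements to get $\BP(\cap_i H_{n,i}^c)\ge\prod_i\BP(H_{n,i}^c)$, and only then invoke Lemma \ref{TorApprox} to identify each $\BP(H_{n,i}^c)$ with $1-h_3(2n,\cdot,\cdot)+o(1)$. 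Without this step you cannot convert $\BP(F_n)>1-\varepsilon_1$ into $1-h_3\le\varepsilon_1^{1/40}+o(1)$.
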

Recall that our goal is to prove Proposition \ref{critequal},
which gives a similar conclusion for
 $ h_3(n,\lambda +\eps,p) $ and
 $h_3(n,\lambda ,p+\eps) $. Thus with this lemma, we are nearly there.

\begin{proof}[Proof of Lemma \ref{highProbCrossing}]
 Let $F_n$ be the event that there is a horizontal black 
crossing of some $18n$ by $2n$ rectangle in $\mathbb{T}$ (like $E_n$
but with no time delay and with no requirement for the
event $\efast(\BT^e)$ to occur). We assert
 the event inclusion
$ E_n^{\text{crude}} \subset F_n$.

Indeed, consider any state $\ubx_0 \in E_n^{\text{crude}}$.
 Let $x_0, x_1,...$ be an enumeration of the sites of 
$\mathbb{T} \cap \mathbb{Z}^2$, let $\text{col}_{x^\prime}$ denote
 the colour of the diamond site $x^\prime$, and let 
$Z_1 = (\text{col}_{x_0^\prime},t_{x_0},
\text{col}_{x_1^\prime},t_{x_1},\ldots )$ be a collection of arrival times 
at octagon sites and colours of diamond sites on the torus which induces
state
 $\ubx_0$ and such that
 $E_n$ holds. By definition, such a $Z_1$ exists. Let $Z_2$ be any other collection of octagon site arrival times and diamond site colours with state consistent with $\ubx_0$. 
At each even site of the torus, the first arrival time under $Z_2$ can be at most $\delta$ later than the first arrival at that site in $Z_1$, and similarly the first arrival at an odd site in $Z_2$ can be no more than $\delta$ earlier than the first arrival in $Z_1$. Therefore any sites which are black when all the arrival times at even sites in $Z_1$ are delayed by $2\delta$
(as per the definition of $E_n$) are also black in
 $Z_2$ (with no delay).
Since the existence of a horizontal crossing is black-increasing,
 and since $Z_1$ with a $2\delta$ delay on the arrival time at even sites
 has a horizontal black crossing of some $18n$ by $2n$ rectangle, $Z_2$
 must therefore have a horizontal black crossing of the same $18n$ by $2n$
 rectangle. Hence $F_n$ occurs and our assertion is justified.

  Suppose for some $\rho >0$ that
 $\limsup_{n \to \infty} h_{\rho} (n,\lambda,p) >0$.   
Let $\varepsilon_1 > 0$. 
By time rescaling 
$ \BP^{\mathbb{T}}_{\lambda(1+\eps),p+ \eps,1} (F_n)
= \BP^{\mathbb{T}}_{\lambda(1+\varepsilon)^{1/2},
p+ \eps,(1+\varepsilon)^{-1/2}} (F_n)$.
Hence by the event inclusion just proved,
 and Lemma  \ref{Lem0615},
we have infinitely often  (i.e., for infinitely many $n$) that
\begin{equation}\label{E3Ineq}
\BP^{\mathbb{T}}_{\lambda(1+\varepsilon),p+ \eps,1}
(F_n)=
\BP^{\mathbb{T}}_{\lambda(1+\varepsilon)^{1/2},p+ \eps,(1+\varepsilon)^{-1/2}}
(F_n)>1-\varepsilon_1.
\end{equation}

Now cover $\mathbb{T}$ with a set of $12n$ by $4n$ rectangles 
$R_{n,1},\ldots,R_{n,40}$ such that whenever $F_n$ holds there
 is a black path crossing some $R_{n,i}$ horizontally.
 We can do this by using rectangles with lower left corner having
 $x$-coordinate
 a multiple of $5n$ and $y$-coordinate a multiple of $2n$.
Let $H_{n,i}$ be the event that $R_{n,i}$ has a black horizontal crossing,
 and note that $H_{n,i}^c$ is white-increasing. Using the Harris-FKG inequality
(Lemma \ref{lemHarris}),  followed by
 Lemma \ref{TorApprox},
we have
 \begin{align*}
\BP^{\mathbb{T}}_{\lambda(1+\eps),p+\eps,1}
\left(\bigcap_{i=1}^{40} H_{n,i}^c\right)
& \geq
\prod_{i=1}^{40} \BP^{\mathbb{T}}_{\lambda(1+\eps),p+\eps,
1}(H_{n,i}^c)
\\
&= ( \BP^{\mathbb{T}}_{\lambda(1+\eps),p+\eps,1}
(H_{n,i}^c))^{40}
\\
&=(1-h_3(2n,\lambda(1+\eps),p+\eps)+o(1))^{40}.
\end{align*}
If none of the $H_{n,i}$ hold then $F_n$ fails, so by \eqref{E3Ineq},
infinitely often 
$$
1-h_3(2n,\lambda (1+\eps),p+\eps)
\leq\varepsilon_1^{1/40}+o(1),
$$
and hence
we have \eqref{mainResLimSup}.
\end{proof}

\begin{proof}[Proof of Proposition \ref{critequal}] 
Let $\lambda >0$, $ p \in (0,1)$ and $\eps \in
(0,1-p)$.  Suppose for some
  $\rho >0$ that
 $\limsup_{n \to \infty} h_{\rho} (n,\lambda,p) >0$.   
Choose $\eps_2 > 0$ such that $\eps_2 < \lambda$
and $\lambda +1  < \eps_2^{-1}$,
and $\eps_2 < p $ and $p+ \eps < 1-\eps_2$. Let $c_1= c_1(\eps_2)$ be
as in Proposition \ref{cor:halfCorrection} and assume without loss
of generality that $c_1 \geq 1$. 
Then 
by the first inequality of (\ref{eq:LambdaComp}),
 for all $n$ we have 
$$
h_3(n,\lambda,p+\varepsilon) \geq
 h_3(n,\lambda+\varepsilon/(2c_1),p+\varepsilon/2);
$$
 hence by Lemma  \ref{highProbCrossing}, we have (\ref{0815a}).
We prove (\ref{0815b}) similarly, now using the second inequality of
(\ref{eq:LambdaComp}).
\end{proof}

{\em Acknowledgement.}
We thank the referees for carefully reading an earlier version
of this manuscript, and providing many helpful suggestions.

The first author was supported by an EPSRC studentship.

\appendix
\label{appendix}
\section{Proof of Lemma \ref{leminfl}}

 By a continuity argument,
it suffices to prove the result
for the case where all entries of $\bp$  are dyadic  rationals, i.e.
 to show that 
for any $n$, $f$, $a$  and any $\bp$ with all entries dyadic rationals
satisfying (\ref{0721a}) we have (\ref{0721b}). 

Choose such a $\bp$ and choose $m \in \N$ such that all entries of
$2^m \bp$ are integers. Let $\BY$ be the space 
$\{0,1\}^m$ with the uniform distribution. We identify the space 
$\BX := \{0,1,\ldots,k\}$ under measure $\BP_{\bp}$, 
with the space $\BY$,
as follows.  Define a function $\tau:\BY \to \BX$ as
follows: the first $2^m p_0$ elements of $\BY$ (under the upwards 
lexicographic ordering) are mapped to
$0 \in \BX$, the next $2^m p_1$ elements of $\BY$ are
mapped to $1 \in \BX$, and so on.

Using this identification, any function
$g:\BX \to \{0,1\}$ induces another function
 $\tilde{g}:  \BY \to  \{0,1\}$,
given by $\tilde{g} = g \circ \tau$.
Moreover, for $\ell \in [m]$ the influence of the $\ell$th coordinate 
of a uniform random element of $\BY$ on $\tilde{g}$ is
equal to $w_{\ell,\bp}(g)$, since switching the $\ell$th digit
of the binary expansion of $U$ amounts to switching
the $\ell$th component of the corresponding random element
of $\BY$.
 Writing $w(\tilde{g})$ for
the sum (over $\ell$) of these influences,
 we have by Lemma \ref{KeyBoundUnlim} that
\bea
w(\tilde{g})
\leq 3 k^2 \pmax(\bp) \log(4/\pmax(\bp))
\leq 3 k^2 q \log (4/q).  
\label{0723a}
\eea
We identify $\BY$ with the power set of $[m]$ in the natural way.
For $S \in \BY$ (i.e.  for $S \subset [m]$), we set
$$
u_S(A) = (-1)^{|S \cap A|}, ~~~ A \subset [m].
$$  
It is well known (and not hard to prove)
 that the functions $u_S, S \subset [m]$ form an
orthonormal basis of the $2^m$-dimensional vector
space of functions from $\BY$ to $\R$, endowed with the
 inner product $\langle \cdot,\cdot \rangle$ given
by
$$
\langle g,h \rangle = 2^{-m} \sum_{A \subset [m]} g(A) h(A).
$$

Given functions $h$ and $g$ from $\BY$  to $\R$, define
the convolution $h * g$ by
\bea
h * g ( S ) = 2^{-m} \sum_{A \subset [m]} h(A) g(S \triangle A) , 
& &
 S \subset [m],
\label{convodef}
\eea
where $\triangle$ denotes the symmetric difference.
Also define the Walsh-Fourier transform $\hat{h}$  of $h$ by
\bea
\hat{h}(S) = \langle h, u_S \rangle,
& &
S \subset [m]. 
\label{WFdef}
\eea
Associated with this is the Walsh-Fourier expansion of $h$,
namely $h= \sum_S \hat{h}(S) u_S$, and the Parseval
equation $\|h\|_2^2:= \langle h,h \rangle = \sum_S \hat{h}(S)^2$.
These are both immediate from the fact that the $u_S$
form an orthonormal basis.
It is well known (and not hard to prove) that
for $S \subset [m]$ we have
\bea
\widehat{h * g} (S) =
 \hat{h}(S) \hat{g}(S). 
\label{convhat}
\eea

Define $T:\BY \to \R$ by $T(Z) = \sum_S u_S(Z) |S|^{1/2}$,
for $Z \subset [m]$, where the sum is over all  $S \subset [m]$.
Then $\hat{T}(S) = |S|^{1/2}$ for all $S$. 
Hence by (\ref{convhat}), for any $h: \BY \to \R$ we have
 $\widehat{T * h} (S) = \hat{h}(S)|S|^{1/2}$.
Hence by the Parseval identity,
\bea
\| T * h \|_2^2 = \sum_{S \subset [m]} \hat{h}(S)^2|S| = (1/4) w(h),
\label{Tgw}
\eea
where $w(h)$ is as in (\ref{0723a}) and for the last equality
we have used the first paragraph of \cite[p.73]{KKL}.

Fix $n \in \N$ and let $f: \BX^n \to \{0,1\}$ be a function.
Let $i \in [n]$. For $S_1,\ldots,S_{i-1},S_{i+1},\ldots,S_n \in \BY$,
 define the function 
$h=h [S_1,\ldots,S_{i-1},S_{i+1},\ldots,S_n]: \BY \to \{0,1\}$
by
\bea
h [S_1,\ldots,S_{i-1},S_{i+1},\ldots,S_n](S) = \tilde{f}(S_1,\ldots,S_{i-1},S,
S_{i+1},\ldots,S_n),
\label{hdef}
\eea
where we set $\tilde{f}(S_1,\ldots,S_n) := f(\tau(S_1),\ldots,\tau(S_n))$.
Also, define the function \linebreak $v=
v [S_1,\ldots,S_{i-1},S_{i+1},\ldots,S_n]: \BY \to \R$ by
$$
v [S_1,\ldots,S_{i-1},S_{i+1},\ldots,S_n] = T * 
h [S_1,\ldots,S_{i-1},S_{i+1},\ldots,S_n].
$$

Now define $W_i(S_1,\ldots,S_n) := v[S_1,\ldots,S_{i-1},S_{i+1},\ldots,S_n] 
(S_i)$, for $S_1,\ldots,S_n \subset [m]$ (recall that we are identifying
$\BY$ with the power set of $[m]$). Then
\bean
W_i(S_1,\ldots,S_n) & = & 2^{-m}
\sum_{R \subset [m]}
T(R) \tilde{f}(S_1,\ldots,S_{i-1}, S_i \triangle R,S_{i+1},\ldots,S_n) 
\\
& = & 2^{-m} \sum_{\tilde{R} \subset [mn] } T_i(\tilde{R}) 
\tilde{f}((S_1,\ldots,S_n) \triangle \tilde{R} )  
\eean
where for $R_1,\ldots,R_n \subset [m]$ we set
$T_i(R_1,\ldots,R_n)= T(R_i) $ if $R_j=\emptyset$ for all $j \neq i$
and $T_i(R_1,\ldots,R_n)=0$ otherwise. 
Thus, with convolutions of functions on $\BY^n$ (or equivalently,
on the power set of $[nm]$) defined analogously to (\ref{convodef}),
we have
\bea
W_i = 2^{m(n-1)} T_i * \tilde{f}.
\label{Wiconv}
\eea

For $F$ a real-valued function
on $\BY^n$ (or equivalently, on the power set of $[mn]$),
 we define the Walsh-Fourier transform of $F$ 
analogously to (\ref{WFdef}),
  by $\hat{F}(S) = 2^{-nm} \sum_{B \subset [nm]} u_S(B) F(B)$
for $S \subset [mn]$.
Writing $S=(S_1,\ldots,S_n)$ with $S_1,\ldots,S_n \subset [m]$,
and $B=(B_1,\ldots,B_n)$ similarly, we have
$u_S(B)= \prod_{j=1}^n u_{S_j}(B_j)$. Hence
\bean
\hat{T}_i(S_1,\ldots,S_n) 
& = & 
2^{-mn}  \sum_{B = (B_1,\ldots,B_n) \subset [mn] }
T_i(B) u_{S_1} (B_1) \cdots u_{S_n}(B_n) 
\\
& = & 2^{-mn} 
\sum_{B_i \subset [m] } T(B_i) u_{S_i}(B_i) 
\\
& = & 2^{-mn +m} 
\hat{T}( S_i) = 2^{m(1-n)} |S_i|^{1/2}.
\eean
Thus by (\ref{convhat}) and (\ref{Wiconv}),
 $\hat{W}_i(S_1,\ldots,S_n) = |S_i|^{1/2} \hat{\tilde{f}}(S_1,\ldots,S_n)$,
so by Parseval's equation for functions on $\BY^n$,
\bea
\| W_i\|_2^2 = \sum_{S_1,\ldots,S_n \subset [m]} 
(\hat{W}_i(S_1,\ldots,S_n))^2
\nonumber \\
=   \sum_{S_1,\ldots,S_n \subset [m]} |S_i| \hat{\tilde{f}}(S_1,\ldots,S_n)^2.
\label{0722a}
\eea
But also,
\bean
\|W_i\|_2^2 & = & 2^{-mn} \sum_{S_1,\ldots,S_n} 
( v[S_1,\ldots,S_{i-1},S_{i+1},\ldots, S_n] (S_i)) ^2
\\
& = & 2^{-mn} 2^m 
\sum_{S_1,\ldots,S_{i-1},S_{i+1},\ldots,S_n}  \| v[S_1,\ldots,S_{i-1},
S_{i+1}, \ldots, S_n] \|_2^2
\\
& = & 2^{m(1-n)} \sum_{S_1,\ldots,S_{i-1},S_{i+1},\ldots,S_n}
 w(h[S_1,\ldots,S_{i-1},S_{i+1},\ldots,S_n] )/4, 
\eean
where for the last line we have used (\ref{Tgw}).
By (\ref{0723a}),
$$
w(h[S_1,\ldots,S_{i-1},S_{i+1},\ldots,S_n]  ) \leq 
 3 k^2 q \log(4/q),
$$
and also
$w(h[S_1,\ldots,S_{i-1},S_{i+1},\ldots,S_n]  ) = 0$
if $\tilde{f}(S_1,\ldots,S_{i-1},\cdot,S_{i+1},\ldots,S_n)$
is  a constant function.  Hence,
\bean
\|W_i\|_2^2 \leq  
 (3/4)k^2 
q \log(4/q) 
I_{f,\bp}(i).
\eean
Summing over $i$ and
combining with (\ref{0722a}), 
we obtain that 
\bean
\sum_{\bS =(S_1,\ldots,S_n)}  \hat{\tilde{f}}(\bS)^2 
\|\bS\| \leq 
 (3/4)k^2 
q \log(4/q) 
\sum_{i=1}^n \delta_i, 
\eean
where we set
$\delta_i:= I_{f,\bp}(i)$ and $\|\bS \| := \sum_{i=1}^n |S_i|$.

Let $\cS_1 := \{\bS:  \|\bS\| \geq 2  k^2 (t(1-t))^{-1} 
q \log (4/q)
\sum_{i=1}^n \delta_i \}$. Then
\bea
\sum_{\bS \in \cS_1} \hat{\tilde{f}} (\bS)^2 & \leq &
 \frac{  t(1-t) \sum_{\bS \in \cS_1}
\|\bS\| \hat{\tilde{f}}(\bS)^2 }{ 
 2 k^2   
q \log (4/q)
 \sum_{i=1}^n \delta_i }
\nonumber \\
& \leq & \frac{3 t(1-t)}{8} ,
\label{0723d}
\eea
whereas by Parseval's equation,
since $\hat{\tilde{f}}(\emptyset) = \BE f (X) =h  $ and $f(\cdot) \in \{0,1\}$,
\bea
\sum_{\{\bS:\|\bS\| >0 \} }  \hat{\tilde{f}} (\bS)^2 =
 \| \tilde{f} \|_2^2- (\BE f (X) )^2 = 
t(1-t).
\label{0723e}
\eea

Next, for $i \in [n]$ we define the function $R_i$ on $\BY^n$ by
\bean
R_i & := & \sum_{S_1,\ldots,S_n \subset [m]:S_i \neq \emptyset} 
\hat{\tilde{f}} (S_1,\ldots,S_n) u_{S_1,\ldots,S_n}
\\ 
& = & \tilde{f} - 
 \sum_{S_1,\ldots,S_{i-1},S_{i+1},\ldots,S_n \subset [m]} 
\hat{\tilde{f}} (S_1,\ldots,\emptyset,\ldots ,S_n) u_{S_1,\ldots,
\emptyset,\ldots,S_n}
\eean   
where we have used the Walsh-Fourier expansion of $\tilde{f}$,
and where it is to be understood that the $\emptyset$ takes
the place of
$S_i$ in the sequence $(S_1,\ldots,\emptyset, \ldots,S_n)$. 
Now,
$$
\hat{\tilde{f}} (S_1,\ldots,\emptyset,\ldots,S_n) 
= \langle \tilde{f}, u_{(S_1\,\ldots,\emptyset, \ldots,S_n)} \rangle 
$$
\bean
& = & 2^{-mn} 
\sum_{B_1,\ldots,B_n \subset [m]}
\tilde{f}(B_1\ldots,B_n) \prod_{j:j \neq i} u_{S_j}(B_j)
\\
& = & 2^{m(1-n)}  \sum_{B_1,\ldots,B_{i-1},B_{i+1},\ldots,B_n \subset [m]}
g_i(B_1,\ldots,B_{i-1},B_{i+1},\ldots,B_n)
 \prod_{j:j \neq i} u_{S_j}(B_j), 
\eean
where we set $g_i(B_1,\ldots,B_{i-1},B_{i+1},\ldots,B_n)$
to be the value of $\tilde{f}(B_1,\ldots,B_n)$ averaged over all
values of $B_i$. Hence
\bean
\hat{\tilde{f}} (S_1,\ldots,\emptyset,\ldots,S_n) 
= \hat{g}_i (S_1,\ldots,S_{i-1},S_{i+1},\ldots,S_n)  
\eean
and so by a further Walsh-Fourier expansion, for
any $B_1,\ldots,B_n \subset [m]$ we have
\bean
R_i (B_1,\ldots,B_n) = \tilde{f}(B_1,\ldots,B_n) - 
g_i ( B_1,\ldots,B_{i-1},B_{i+1},\ldots,B_n).
\eean
Therefore $|R_i(\bB) | \leq 1$ for all $\bB = (B_1,\ldots,B_n) \subset [mn]$,
and $R_i(\bB) =0$ whenever $h[B_1,\ldots,B_{i-1},B_{i+1},\ldots,B_n]$,
defined by (\ref{hdef}),
is a constant function. Therefore, writing $\|g\|_p $ for
$(2^{-mn} \sum_{\bB \subset [mn]} |g(\bB)|^p)^{1/p}$
 for any real-valued function
$g$ defined on $\BY^n$ and any $p \geq 1$, we have that
$$
\| R_i \|_{4/3}^{4/3} \leq I_{f,\bp}(i),
$$
and therefore by the Bonami-Beckner inequality (Lemma 4 of \cite{BKKKL}),
 for $\eps = 3^{-1/2} $,
\bea
\|T_\eps R_i \|_2^2 \leq \|R_i\|_{1+\eps^2}^2 \leq \delta_i^{3/2},    
\label{0723b}
\eea
where we set
$$
T_\eps R_i := \sum_{{\bf S} \subset [mn]} \hat{R_i} ({\bf S}) \eps^{|{\bf S}|} 
u_{{\bf S}}.  
$$
Since $\hat{R}_i(S_1,\ldots,S_n)$ is zero or $\hat{\tilde{f}}(S_1,\ldots,S_n)$,
according to whether $S_i$ is empty or not,
so by Parseval's identity
\bea
\| T_\eps R_i \|_2^2 = \sum_{\bS = (S_1,\ldots,S_n)}
\hat{\tilde{f}}(\bS)^2 \eps^{2 \|\bS\| } {\bf 1}\{
S_i \neq \emptyset\} .
\label{0723c}
\eea
For $\bS =(S_1,\ldots,S_n) \subset [mn]$ let
 $\mu (\bS)$ denote the number of $i$ such that $S_i \neq \emptyset$.
Comparing (\ref{0723b}) with (\ref{0723c}) and
summing over $i$ yields
$$
 \sum_{\bS }
\hat{\tilde{f}}(\bS)^2 \eps^{2 \| \bS\| } \mu(\bS)
\leq \sum_{i=1}^n \delta_i^{3/2}.
$$
Let ${\cS}_2$ be the set of $\bS$ such that
$1 > \eps^{2\|\bS\|} \geq \left( 2 \sum_i \delta_i^{3/2} \right)/ (t(1-t))$.  
Then 
\bean
\sum_{\bS \in \cS_2} \hat{\tilde{f}}(\bS)^2 \leq
\sum_{\bS \in \cS_2} \frac{ \mu(\bS) \eps^{2\|\bS\| } \hat{\tilde{f}}(\bS)^2
t(1-t) }{
 2 \sum_i \delta_i^{3/2} }
\\
\leq t(1-t)/2.
\eean
Combined with (\ref{0723d}) and (\ref{0723e}),
since $(3/8)+(1/2)< 1$, this shows that
there exists $\bS$ with $\|\bS \| >0$ lying  neither in
$\cS_1$ nor in $\cS_2$. Choosing such an $\bS$, 
since $\bS \notin \cS_2$ 
we have
$
3^{-\|\bS\|} < \left( 2 \sum_i \delta_i^{3/2} \right)/(t(1-t))
$ 
so that
$$
\| \bS \| > \log \left( \frac{ t(1-t)}{2 \sum_i \delta_i^{3/2} }
 \right)/ \log 3   
$$
but also $\bS \notin \cS_1$, so that
$$
 2 k^2 
q \log (4/q)  
\sum_i \delta_i >  t(1-t) \log \left(
\frac{t(1-t)}{2 \sum \delta_i^{3/2} } \right)/ \log 3.
$$

Suppose (\ref{0721a}) holds. Then, setting 
$\alpha := a q^2 (\log(4/q))^2$ 
and 
$I_f := \sum_{i=1}^n \delta_i$, 
we have that
$\delta_i \leq \alpha$ for all $i$ so that
 $
\sum_i \delta_i^{3/2} \leq 
 \alpha^{1/2} I_f. 
$
Since $2 \log 3 <3$, we have that
$$
3 k^2 I_f q \log (4/q) 
>  t(1-t) \log \left( \frac{
t(1-t)}{2 \alpha^{1/2} I_f} \right).
$$
Setting $x := t(1-t)/(q \log (4/q))$ and $b := I_f/x$, we have
$$
I_f > \left( \frac{ x}{ 3 k^2} \right) \log \left(
\frac{t(1-t)}{2 a^{1/2} 
(q \log 4/q) 
b x} \right) 
= \left( \frac{ x}{3 k^2} \right) \log \left( \frac{1}{2 a^{1/2}b} \right).
$$
Since $I_f = bx$ it follows that 
$b \geq (1/3)k^{-2} \log(1/(2a^{1/2} b))$, and therefore
$b+ (1/3) k^{-2} \log b \geq  (1/3)k^{-2} \log (1/(2a^{1/2}))$.

Since $(\log u)/u \leq e^{-1}$ for all $u >0$, 
and since we assume
 $a \leq 1/16$ so that $\log (a^{-1/2} ) \geq 2 \log 2$,
therefore
$$
2 b \geq b + (1/3)k^{-2} \log b \geq (1/3)  k^{-2} \log (1/(2a^{1/2})) 
\geq 
 (1/6) k^{-2} \log a^{-1/2}. 
$$ 
Therefore
$b \geq
 (24 k^2)^{-1}
\log (1/a)$, which implies (\ref{0721b}). 

\end{document}